\newtheorem{thm}{Theorem}[section]
\newtheorem{lem}[thm]{Lemma}
\newtheorem{prop}[thm]{Proposition}
\theoremstyle{definition}
\newtheorem{defn}[thm]{Definition}
\theoremstyle{remark}
\newtheorem{rem}[thm]{Remark}
\newtheorem{ex}[thm]{Example}
\newcommand{\opn}[1]{\operatorname{#1}}
\newcommand{\mbb}[1]{\mathbb{#1}}
\newcommand{\mc}[1]{\mathcal{#1}}
\newcommand{\ms}[1]{\mathsf{#1}}
\newcommand{\bs}[1]{\boldsymbol{#1}}
\def\>{\rangle}
\def\<{\langle}
\def\RR{\rrangle}
\def\LL{\llangle}
\def\tr{\opn{Tr}}
\def\rank{\opn{rank}}
\def\spn{\opn{span}}
\def\ot{\leftarrow}
\def\0{\bs{0}}
\def\1{\mathbbm{1}}
\def\N{\mbb{N}}
\def\Z{\mbb{Z}}
\def\C{\mbb{C}}
\def\T{\mbb{T}}
\def\D{\mbb{D}}
\def\HH{\mathscr{H}}
  \def\XXint#1#2#3{{\setbox0=\hbox{$#1{#2#3}{\int}$}
      \vcenter{\hbox{$#2#3$}}\kern-.47\wd0}}
\newcommand{\perpo}[1]{#1^\perp}
\begin{document}


\title[A quantum dynamical approach to matrix Khrushchev's formulas]
{A quantum dynamical approach to \\ matrix Khrushchev's formulas}
\author{\small C. Cedzich$^1$, F.A. Gr\"unbaum$^2$, L. Vel\'azquez$^3$, A.H. Werner$^4$, R.F. Werner$^1$}
\address{\scriptsize
$^1$Institut f\"ur Theoretische Physik, Leibniz Universit\"at Hannover, Appelstr. 2, 30167 Hannover, Germany}
\address{\scriptsize
$^2$Department of Mathematics, University of California, Berkeley, CA 94720, USA}
\address{\scriptsize
$^3$Departamento de Matem\'{a}tica Aplicada $\&$ IUMA, Universidad de Zaragoza,
Mar\'{\i}a de Luna 3, 50018 Zaragoza, Spain}
\address{\scriptsize
$^4$Dahlem Center for Complex Quantum Systems, Freie Universit\"at Berlin, 14195 Berlin, Germany}

\subjclass[2010]{42C05, 47A56}

\keywords{Khrushchev's formula, operator-valued Schur functions, factorization of unitary operators, matrix Szeg\H{o} polynomials, CMV matrices, quantum walks, quantum recurrence}

\date{}	


\begin{abstract}

Khrushchev's formula is the cornerstone of the so called Khrushchev theory, a body of results which has revolutionized the theory of orthogonal polynomials on the unit circle. This formula can be understood as a factorization of the Schur function for an orthogonal polynomial modification of a measure on the unit circle. No such formula is known in the case of matrix-valued measures. This constitutes the main obstacle to generalize Khrushchev theory to the matrix-valued setting which we overcome in this paper.

It was recently discovered that orthogonal polynomials on the unit circle and their matrix-valued versions play a significant role in the study of quantum walks, the quantum mechanical analogue of random walks. In particular, Schur functions turn out to be the mathematical tool which best codify the return properties of a discrete time quantum system, a topic in which Khrushchev's formula has profound and surprising implications.

We will show that this connection between Schur functions and quantum walks is behind a simple proof of Khrushchev's formula via `quantum' diagrammatic techniques for CMV matrices. This does not merely give a quantum meaning to a known mathematical result, since the diagrammatic proof also works for matrix-valued measures. Actually, this path counting approach is so fruitful that it provides different matrix generalizations of Khrushchev's formula, some of them new even in the case of scalar measures.

Furthermore, the path counting approach allows us to identify the properties of CMV matrices which are responsible for Khrushchev's formula. On the one hand, this helps to formalize and unify the diagrammatic proofs using simple operator theory tools. On the other hand, this is the origin of our main result which extends Khrushchev's formula beyond the CMV case, as a factorization rule for Schur functions related to general unitary operators.

\end{abstract}

\maketitle

\section{Introduction}
\label{sec:INTRO}

The beginning of this century has seen the development of two striking ideas that have changed the way one looks at the theory of orthogonal polynomials on the unit circle (OPUC) and its connections with other areas. These ideas are nowadays known under the names of Khrushchev theory \cite{Kh01,Kh02,Si-OPUC} and the theory of CMV matrices \cite{Wa-CMV,CMV,Si-OPUC,Si-CMV}. Indicative of the impact of these theories, the key papers that caused their explosion are the two last entries of Barry Simon's list singling out twelve among the most important in the OPUC history \cite[Appendix D: ``Twelve Great Papers"]{Si-OPUC}.

In a sense, the {\it leitmotiv} of our work is the discovery of the natural meeting point for these two subjects of current interest, and the use of this discovery to lay the foundations of further extensions of Khrushchev theory. Curiously enough, the catalyst for the natural combination of both mathematical issues is another hot topic coming from quantum information theory, the so called quantum walks, the quantum analog of the classical random walks \cite{AhDaZa-QW,Amb-QW,Kem-QW,Ken-QW}. So, in the end, the meeting point becomes a triple junction among three highly topical subjects.

The body of \emph{Khrushchev theory} was developed by Sergei Khrushchev in two seminal papers \cite{Kh01,Kh02} published in 2001 and 2002. His view on OPUC theory emphasizes the role of continued fractions and Schur functions. This perspective allowed him to prove deep results linking the behaviour of OPUC, measures, Verblunsky coefficients and related Schur functions.
The core of Khrushchev theory is an identity, known as Khrushchev's formula, which factorizes the Schur function for an OPUC modification of a measure in terms of (iterates and inverse iterates of) the Schur function for the unmodified measure. Khrushchev's proof of this formula is based on the application of analytical techniques combined with continued fraction expansions of Schur functions and some specific results on OPUC.

\emph{CMV matrices} emerged at the beginning of the nineties in the study of diagonalization processes for unitary matrices in numerical linear algebra (see \cite{Wa-CMV} and references therein). Nevertheless,
they did not receive much attention until their rediscovery in 2003 \cite{CMV} in the OPUC context as the unitary analog of Jacobi matrices for orthogonal polynomials on the real line. The amount of new OPUC results coming from the translation to CMV of the largely established theory of Schr\"odinger and Jacobi operators, together with the multiple connections they opened to other pure and applied areas, contributed to the wide recognition of CMV matrices.

\emph{Quantum walks (QWs)} also first appeared in the early nineties \cite{AhDaZa-QW} as simple quantum mechanical models of discrete time evolutions of single particles. In particular, QWs exhibit a large variety of quantum effects, e.g. Anderson localization and ballistic transport,
which is in striking contrast with their classical counterparts, the random walks \cite{anderson, electW, Grimmet,timeRandom, joye1}. Motivated by the widespread use of classical random walks in the design of randomized algorithms, QWs started being used in quantum algorithms, discovering that they can provide an exponential speedup over any classical one \cite{Amb-QW,Kem-QW,Ken-QW}. This triggered a growing and widening interest in the topic in theoretical and mathematical physics \cite{childs,spactRand,molecules}, as well as in experimental physics \cite{Exp-Neut1,Exp-TrappedIons1,Exp-TrappedIons2,Exp-Neut2}.

Let us briefly comment on the connections between these three different areas.
CMV matrices have been used for an operator approach to Khruschev's formula, understood as the unitary analog of Dirichlet decoupling of Schr\"odinger operators \cite[Chapter 4]{Si-OPUC}. Such an approach uses a decoupling of CMV matrices by finite rank perturbations, together with a number of results on matrix Schur functions.

The connection between CMV matrices and QWs rests on the conservation of probability in closed systems, mathematically expressed by the unitarity of the quantum evolution operator. This connection can be seen as a consequence of the natural idea that the simplest non-trivial quantum models are described by the canonical form of the unitaries, i.e. the CMV matrices \cite{CGMV-QW,CGMV-DEFECT,CGMV,GV-RIESZ}. This parallels the role of Jacobi matrices in the simplest classical random walks, the birth-death processes, which only allow for nearest neighbour transitions \cite{birthdeath}.

Very recently, this link between QWs and CMV matrices has led to an unexpected relation between OPUC and quantum mechanics: Schur functions codify the return properties of discrete time quantum systems as generating functions of first time return probability amplitudes \cite{GVWW,BGVW}. Khrushchev's formula enters the game via this relation:
It has been used to compute return probabilities and expected return times with OPUC techniques \cite{GVWW,BGVW}; it also implies a surprising invariance of the return properties with respect to certain local perturbations (see comments  in \cite[Section 6]{GVWW} and \cite[Section 4]{BGVW}).

The above dynamical interpretation of Schur functions is the starting point for the new results we report in this paper.
We use this connection to obtain a dynamical interpretation of Khrushchev's factorization formula. This factorization corresponds to a splitting of a quantum system into subsystems in such a way that its return properties can be reconstructed from those of the subsystems. The important insight in this paper is that this relation goes both ways. In particular, this allows us to apply methods from the theory of QWs to OPUC. In this way we obtain a purely diagrammatic proof of Khrushchev's formula based on path counting for return paths of what we call ``CMV quantum walks". In contrast to previous proofs of Khrushchev's formula, this one is based solely on algebraic arguments and does not require perturbation theory.
Furthermore, the proof has the advantage of being applicable to more general situations, such as matrix-valued OPUC, which are of central interest for this paper.

Matrix-valued measures on the unit circle give rise to two different inner products, leading to two types of matrix-valued OPUC (MOPUC) (see \cite{DaPuSi-MOPUC} and references therein). Matrix Schur functions play for MOPUC a similar role as in the scalar case. There is also an underlying CMV theory, developed in \cite{Si-OPUC,Si-CMV,DaPuSi-MOPUC}, which is based on the use of block CMV matrices, a generalization of CMV matrices in which the coefficients are substituted by square matrices.
MOPUC also appear naturally in the context of QWs: On the one hand, QWs on the whole line can be described by block CMV matrices \cite{CGMV-QW,CGMV-DEFECT,CGMV}. On the other hand, matrix-valued Schur functions codify the return properties for higher dimensional subspaces \cite{BGVW}.

The transition from the scalar to the matrix-valued setting has been carried out successfully despite the problem of non-commutativity.
However, none of the methods already used to obtain Khrushchev's formula for scalar measures have been generalized to matrix measures at present. Actually, D. Damanik, A. Pushnitski and B. Simon declare explicitly in their survey on matrix orthogonal polynomials \cite[page 63]{DaPuSi-MOPUC}:

{``\it Among the deepest and most elegant methods in OPUC are those of Khrushchev [$\dots$]. We have not been able to extend them to MOPUC! We regard their extension as an important open question"}

The main objective of this work is to establish the starting point to fill the gap pointed out in this quote, providing Khrushchev's formulas for MOPUC. As it turns out the path counting arguments from the scalar case carry over directly to the matrix-valued one.
Indeed, we do more than this: We prove an abstract generalization of Khrushchev's formula for unitary operators, from which several versions for MOPUC follow, some of them new even for scalar OPUC.
This abstract Khrushchev's formula amounts to a factorization of operator-valued Schur functions.
A sufficient condition for this Schur factorization is the existence of a certain factorization of the original unitary operator into unitaries on smaller subspaces. This approach differs in two aspects from decouplings previously used to prove the scalar Khrushchev formula: the unitary factors act on subspaces which are orthogonal only up to an ``overlapping" subspace; this instead of a true decoupling is the price to pay for a strict operator factorization into unitaries with no need of perturbations.

This abstract version of Khrushchev's formula on the level of unitary operators suggests some relations of this material with other topics such as the index theory for QWs \cite{indexpap} and  dilation theory via transfer functions and related issues (see \cite{Ka-DT} and references therein).

The paper is organized as follows: In Section~\ref{sec:GEN-K} we briefly recall the connection between return properties and Schur functions before developing an abstract generalization of Khrushchev's formula for unitary operators. This main result consists of two parts: a characterization of unitary operators admitting overlapping factorizations (Theorem~\ref{thm:CHARAC-OVERLAP}); a proof that such an overlapping factorization implies a Khrushchev like factorization of operator-valued Schur functions (Theorem \ref{thm:GEN-K}).
Khrushchev's formulas for MOPUC are proven in Sections~\ref{sec:CMV-K} and \ref{sec:HESS-K} as an application of the abstract result to block CMV and Hessenberg matrices. To complement the abstract operator result, the path counting approach is presented in Sections~\ref{sec:GEN-PC} and \ref{sec:CMV-PC} for several reasons: path counting gives a closer and more intuitive approach to Khrushchev's formula which helps to unravel its physical meaning. Also, path counting is crucial to guess the abstract Khrushchev formula, so it could be important in surmising other results before developing a more formal approach, and diagrammatics for OPUC represents a novel and effective technique which eventually could be used for other purposes. Finally, Section~\ref{sec:MtoS} illustrates the impact of matrix Khrushchev's formulas in the derivation of scalar Schur functions related to scalar OPUC.

\section{Khrushchev's formula for unitary operators and quantum recurrence}
\label{sec:GEN-K}

Before going into the matrix, or more generally, operator versions of Khrushchev's formula we first recall the scalar one. Khrushchev's formula expresses a factorization property of Schur functions, which are those analytic functions $f(z)$ on the unit disk $\D=\{z\in\C:|z|<1\}$ satisfying $|f(z)|\le1$.
Schur functions $f(z)$ are by Herglotz' theorem in one-to-one correspondence with probability measures $\mu$ on the unit circle $\T=\{z\in\C:|z|=1\}$ via the relations
\begin{equation} \label{eq:C-S}
 F(z) = \int_\T \frac{t+z}{t-z}\,d\mu(t),
 \qquad
 f(z) = z^{-1} (F(z)-1)(F(z)+1)^{-1},
\end{equation}
which define the Carath\'eodory function $F(z)$ and Schur function $f(z)$ of the measure $\mu$.

A key role in the theory of Schur functions is played by the Schur algorithm
\begin{equation} \label{eq:SA}
 f_0(z) = f(z);
 \qquad
 f_{j+1}(z) = \frac{1}{z} \frac{f_j(z)-\alpha_j}{1-\overline\alpha_j f_j(z)},
 \quad \alpha_j=f_j(0), \quad j\ge0,
\end{equation}
which characterizes any Schur function $f$ by a finite or infinite sequence $(\alpha_0,\alpha_1,\alpha_2,\dots)$ of parameters in the closed unit disk $\overline\D$, called the Schur parameters of $f$. The function $f_j$, named the $j$-th iterate of $f$, is a Schur function itself which is characterized by the sequence $(\alpha_j,\alpha_{j+1},\alpha_{j+2},\dots)$.
In addition, Geronimus' theorem asserts that the Schur parameters coincide with the Verblunsky coefficients appearing in the recurrence relation for the orthogonal polynomials of the corresponding probability measure.

The above connection establishes a one-to-one correspondence between probability measures supported on infinitely many points of $\T$ and infinite sequences in $\D$. The sequence of Schur parameters terminates when one of them lies on $\T$, meaning that the related Schur function corresponds to a finitely supported probability measure. This is for instance the case for the $j$-th inverse iterate $b_j$ of a Schur function $f$ with Schur parameters $(\alpha_0,\alpha_1,\dots)$, which is defined as the Schur function with Schur parameters $(-\overline\alpha_{j-1},-\overline\alpha_{j-2},\dots,-\overline\alpha_0,1)$.

If $\mu$ is a probability measure on $\T$ with corresponding Schur function $f$ and $\varphi_j$ are the related orthonormal polynomials, Khrushchev's formula states that the Schur function of $|\varphi_j|^2\,d\mu$ factorizes as a product $f_jb_j$ of an iterate $f_j$ and an inverse iterate $b_j$ of $f$. This simple result has profound consequences in the theory of orthogonal polynomials. Roughly speaking, the reason for this is that it contains in a single ``formula" the trinity which constitutes the core of OPUC theory: probability measure, orthogonal polynomials and Verblunsky coefficients/Schur parameters.

Recently, the impact of this formula in the study of discrete time quantum systems has been recognized \cite{GVWW,BGVW}. In this context, Khrushchev's factorization becomes a tool to split a physical system into subsystems such that the return properties of these subsystems allow us to recover those of the whole system. An operator interpretation of the measure $|\varphi_j|^2\,d\mu$ involved in Khrushchev's formula is central to understand both novel ideas, the quantum mechanical interpretation of Khrushchev's formula and its eventual extensions to more general contexts. Actually, one of the purposes of the present work is to make evident that these ideas are not only closely related, but reveal a fruitful symbiosis.

Given a probability measure $\mu$ on $\T$, any normalized function $\phi\in L^2_\mu$ defines a new probability measure $d\mu_\phi=|\phi|^2d\mu$ on $\T$ whose moments are given by
\[
 \int z^n\,d\mu_\phi(z) = \langle \phi|U_\mu^n\phi \rangle_\mu,
 \qquad
 n\in\Z,
\]
where $\langle\cdot\,|\,\cdot\rangle_\mu$ is the inner product in $L^2_\mu$ and $U_\mu$ denotes the unitary multiplication operator
\begin{equation} \label{eq:multip-op}
 \begin{array}{c @{\hspace{5pt}} c @{\hspace{2pt}} c @{\hspace{2pt}} c}
 U_\mu\colon & L^2_\mu & \longrightarrow & L^2_\mu
 \\
 & \phi(z) & \mapsto & z\phi(z)\; .
\end{array}
\end{equation}
The measure $\mu_\phi$ is called the spectral measure of $\phi$ with respect to $U_\mu$.

Using this language, Khrushchev's formula for a measure $\mu$ can be viewed as a factorization of the Schur function for a spectral measure with respect to the unitary operator $U_\mu$, namely, the spectral measure $d\mu_{\varphi_j}=|\varphi_j|^2\,d\mu$ of an orthonormal polynomial $\varphi_j$.

This point of view naturally leads to the search for generalizations of Khrushchev's formula to general unitary operators, understood as factorizations of Schur functions for spectral measures related to such operators. Moreover, spectral measures can be associated not only with vectors, but also with subspaces, which give rise to operator-valued instead of scalar measures on $\T$. The corresponding Khrushchev formulas should be by analogy factorizations of operator-valued Schur functions, i.e. analytic functions on the unit disk with values in the set of contractions on a Hilbert space. These functions are related to normalized operator-valued measures exactly as in \eqref{eq:C-S}, where the symbol $1$ stands now for the identity operator $\1$ on an appropriate space.

More precisely, given a unitary operator $U$ on a Hilbert space $(\HH,\<\cdot\,|\,\cdot\>)$, its spectral decomposition $U=\int z\,dE(z)$ induces a spectral measure $\mu_V=PEP$ for any subspace $V\subset \HH$, with $P=P_V$ the orthogonal projection of $\HH$ onto $V$. In other words, $\mu_V$ is the unique measure on $\T$ with values in the set of operators on $V$ whose moments are given by
\begin{equation} \label{eq:SPEC-MEASURE}
 \mu_{V,n} = \int z^n\,d\mu_V(z) = PU^nP, \qquad n\in\Z.
\end{equation}
We define the Carath\'eodory and Schur functions, $F_V$ and $f_V$, of a subspace $V\subset \HH$ as those related to the spectral measure $\mu_V$. The case of a one-dimensional subspace $V=\spn\{\psi\}$ leads to the notion of spectral measure $\mu_\psi=\langle\psi|E\psi\rangle$, Carath\'eodory function $F_\psi$ and Schur function $f_\psi$ of a normalized vector $\psi\in \HH$. These concepts depend on the unitary operator $U$, so we will refer to a $U$-spectral measure, $U$-Carath\'eodory function or $U$-Schur function when it is convenient to make explicit the operator dependence.

The values of $\mu_V$, $F_V$ and $f_V$ must be considered as operators on $V$. For example, due to the normalization of $\mu_V$ we have that $F_V(0)=\1_V$ is the identity operator on $V$. From the spectral decomposition of $U$ we find that the the moments of a spectral measure provide the power expansion of the corresponding $U$-Carath\'eodory function,
\[
 F_V(z) = P(U+z\1_\HH)(U-z\1_\HH)^{-1}P =
 \1_V + 2\sum_{n\ge1} \mu_{V,n}^\dag \, z^n.
\]
The analog of this result for the power expansion of $U$-Schur functions is not so trivial and was obtained only recently.
\begin{lem}[see \cite{GVWW, BGVW}]\label{lem:STay} Let $U$ be a unitary operator on a Hilbert space $\HH$. The $U$-Schur function of  a closed subspace $V\subset \HH$ satisfies
{\rm
\begin{equation} \label{eq:f-U}
 f_V(z) = P(U-z\perpo{P})^{-1}P =
 \sum_{n\ge1} a_{V,n}^\dag \, z^{n-1},
 \kern17pt
 a_{V,n} = PU(\perpo{P}U)^{n-1}P,
 \kern17pt
  P^\perp= \1_\HH-P.
\end{equation}
}
\end{lem}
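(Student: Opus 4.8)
The plan is to derive \eqref{eq:f-U} directly from the Herglotz correspondence \eqref{eq:C-S}, combining the Carath\'eodory formula recorded just above with a single resolvent identity; no perturbation theory or path counting is needed for this statement, even though the coefficients $a_{V,n}=PU(\perpo{P}U)^{n-1}P$ do carry the suggestive reading ``leave $V$, evolve by $U$, stay outside $V$ for $n-1$ further steps, return to $V$'', which is exactly what ties the formula to quantum recurrence.

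First I would rewrite the Carath\'eodory function in resolvent form: using $\tfrac{t+z}{t-z}=1+\tfrac{2z}{t-z}$ in the formula $F_V(z)=P(U+z\1_\HH)(U-z\1_\HH)^{-1}P$ recalled above gives, for $z\in\D$,
\[
 F_V(z)=\1_V+2z\,P(U-z\1_\HH)^{-1}P ,
\]
the inverse existing on $\D$ since $U$ is unitary. I abbreviate $R(z):=P(U-z\1_\HH)^{-1}P$ and $g(z):=P(U-z\perpo{P})^{-1}P$, both analytic on $\D$ because $U-z\perpo{P}=U(\1_\HH-zU^{-1}\perpo{P})$ is invertible there ($\|U^{-1}\perpo{P}\|\le1$).

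The crux is to relate $R$ and $g$. I would split $U-z\1_\HH=(U-z\perpo{P})-zP$ and feed it into the resolvent identity $(A-B)^{-1}=A^{-1}+A^{-1}B(A-B)^{-1}$ with $A=U-z\perpo{P}$ and $B=zP$; compressing the resulting identity by $P$ on both sides and using $P^2=P$ collapses the rank-$V$ correction into a geometric series and produces the ``renewal'' relation $R(z)=g(z)+z\,g(z)R(z)$, hence $R(z)=(\1_V-zg(z))^{-1}g(z)$ for small $z$. Substituting this into the formula for $F_V$ gives $F_V(z)=(\1_V-zg(z))^{-1}(\1_V+zg(z))$, so $F_V-\1_V=2z(\1_V-zg)^{-1}g$ and $F_V+\1_V=2(\1_V-zg)^{-1}$; plugging these into $f_V=z^{-1}(F_V-\1_V)(F_V+\1_V)^{-1}$ and using that $g$ commutes with $\1_V-zg$ leaves $f_V(z)=g(z)=P(U-z\perpo{P})^{-1}P$, first near $z=0$ and then on all of $\D$ by analytic continuation. (Compressing the mirror form of the identity gives $R=g+zRg$ as well, so $R$ and $g$ actually commute, but I do not expect to need this.)

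For the power expansion I would write $(U-z\perpo{P})^{-1}=U^{-1}\sum_{k\ge0}(z\,\perpo{P}U^{-1})^{k}$ as a norm-convergent Neumann series for $|z|<1$, compress by $P$, reindex $n=k+1$, and take adjoints, using $P^\dag=P$, $(\perpo{P})^\dag=\perpo{P}$, $U^\dag=U^{-1}$, and the reassociation $U^{-1}(\perpo{P}U^{-1})^{k}=(U^{-1}\perpo{P})^{k}U^{-1}$; this rewrites $g(z)$ as $\sum_{n\ge1}a_{V,n}^\dag z^{n-1}$ with $a_{V,n}=PU(\perpo{P}U)^{n-1}P$, as claimed. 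I expect the one genuinely delicate point to be the resolvent step: one must notice that the ``right'' way to split $U-z\1_\HH$ is as the still-invertible operator $U-z\perpo{P}$ minus the $V$-supported correction $zP$, because only then does compression by $P$ make the identity self-referential and yield the scalar-type geometric series in the operator $g(z)$. Everything after that is routine manipulation of operator-valued analytic functions near the origin, together with the standard invertibility of the Herglotz map \eqref{eq:C-S}.
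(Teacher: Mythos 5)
Your proposal is correct and rests on the same algebraic core as the paper's own proof, namely the splitting $U-z\1_\HH=(U-z\perpo{P})-zP$ together with the Herglotz relation \eqref{eq:C-S} and the observation $\perpo{P}P=\0_\HH$ (which is what collapses the Neumann series in both arguments). The only difference is organizational: you \emph{derive} $f_V=g$ by solving the renewal relation $R=g+zgR$ for the compressed resolvent, whereas the paper \emph{verifies} that the candidate $P(U-z\perpo{P})^{-1}P$ satisfies the equivalent identity $\tfrac12(\1_V-zf_V)(F_V+\1_V)=\1_V$; both computations are valid and essentially interchangeable.
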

For the convenience of the reader we give a short alternative proof of the lemma.
\begin{proof}
We only need to show that the expression of $f_V$ given by \eqref{eq:f-U} is related to the Carath\'eodory function $F_V$ by $\frac{1}{2}(\1_V-zf_V(z))(F_V(z)+\1_V)=\1_V$, which is equivalent to \eqref{eq:C-S}. Since the values of $f_V$ and $F_V$ must be understood as operators on $V$, the above identity reads in $\HH$ as
\begin{align*}
[P-zP(U-z\perpo{P})^{-1}P][P(U-z\1_\HH)^{-1}(U+z\1_\HH)P+P]=2P\;.
\end{align*}
The validity of this equality is easily checked by rewriting its left-hand side as
\begin{multline*}
P[\1_\HH-z(U-z\perpo{P})^{-1}P][(U-z)^{-1}(U+z)+\1_\HH]P \\= 2P(U-z\perpo{P})^{-1}UP
 = 2P(1-zU^\dag\perpo{P})^{-1}P = 2P\; ,
\end{multline*}
where the last step is due to the fact that $\perpo{P}P=\0_\HH$ is the null operator on $\HH$.
\end{proof}

If we interpret $U$ as the one-step operator of a discrete time quantum evolution, \eqref{eq:f-U} gives a quantum mechanical meaning to the Taylor coefficients of $f_V$ \cite{GVWW, BGVW}: assuming that the evolution starts at some initial state $\psi\in V$, $\|a_{V,n}\psi\|^2$ corresponds to the probability to return for the first time to the subspace $V$ after exactly $n$ steps, because the projection $\perpo{P}$ conditions on the event ``no return" during the first $n-1$ time steps. This connects Lemma \ref{lem:STay} to the issue of quantum recurrence, which was indeed the context in which this result was first proven. Following quantum mechanical terminology, we refer to the operator coefficients $a_{V,n}$ as the first return amplitudes of $V$. The generating function of these amplitudes
\begin{equation} \label{eq:a-U}
 a_V(z) = \sum_{n\ge1} a_{V,n}z^n = zUP(\1_V-z\perpo{P}U)^{-1}P = zf_V^\dag(z),
 \qquad
 f^\dag(z) = f(\overline{z})^\dag,
\end{equation}
will be called the first return generating function of $V$.

A particularly simple but useful example of a constant Schur function for any unitary operator $U$ is given when $V=\HH$ is the whole Hilbert space, so that
\begin{equation} \label{eq:fH}
 f_\HH(z) = U^\dag\; .
\end{equation}
In addition, the identity operator $U=\1_\HH$ gives the identity Schur function $f_V(z)=\1_V$ for an arbitrary subspace $V$.

Returning to the factorization properties of Schur functions, the naive idea that a factorization of a unitary operator induces a factorization of the related Schur functions is not true in general. For instance, from \eqref{eq:f-U} we find that the choice $U=\frac{1}{\sqrt{2}}\left(\begin{smallmatrix}1&1\\1&-1\end{smallmatrix}\right)$ and $V=\text{span}\{e_1\}$ leads to the Schur function $f_{e_1}(z)=(1+\sqrt{2}z)(\sqrt{2}+z)^{-1}$ for $e_1=\left(\begin{smallmatrix}1\\0\end{smallmatrix}\right)$, while we know that the identity matrix $U^2$ yields for any vector the constant Schur function $1\ne f_{e_1}^2$. Nevertheless, we will see that such Schur factorizations appear in the case of what we will call ``overlapping" factorizations of unitaries.

\begin{defn} \label{def:overlap}
Let $U$ be a unitary operator on a Hilbert space $\HH$. Suppose that $\HH$ admits an orthogonal decomposition $\HH = \HH_L \oplus \HH_C \oplus \HH_R$ such that
\begin{equation} \label{eq:overlap}
 U = (U_{LC} \oplus \1_{\HH_R})(\1_{\HH_L} \oplus U_{CR}),
\end{equation}
with $U_{LC}$ and $U_{CR}$ unitary operators on $\HH_{LC} = \HH_L \oplus \HH_C$ and $\HH_{CR} = \HH_C \oplus \HH_R$ respectively. Then, we say that \eqref{eq:overlap} is an $\HH_C$-overlapping factorization of $U$. We call $\HH_C$, $\HH_L$ and $\HH_R$ the overlapping, left and right subspaces, and we will refer to $U_{LC}$ and $U_{CR}$ as the left and right operators respectively.
\end{defn}

\begin{ex} \label{ex:G3.G4}
The unitary operators which in some orthonormal basis $\{\psi_j\}$ have the matrix representations
\[
\begin{aligned}
 & \left(\begin{smallmatrix}
 \frac{-1}{3}&\frac{2}{3}&\frac{-1}{3}&\frac{1}{3}&\frac{1}{3}&\frac{1}{3}
 \\[1pt]
 \frac{2}{3}&\frac{-1}{3}&\frac{-1}{3}&\frac{1}{3}&\frac{1}{3}&\frac{1}{3}
 \\[1pt]
 \frac{2}{3}&\frac{2}{3}&\frac{1}{6}&\frac{-1}{6}&\frac{-1}{6}&\frac{-1}{6}
 \\[1pt]
 0&0&\frac{1}{2}&\frac{-1}{2}&\frac{1}{2}&\frac{1}{2}
 \\[1pt]
 0&0&\frac{1}{2}&\frac{1}{2}&\frac{-1}{2}&\frac{1}{2}
 \\[1pt]
 0&0&\frac{1}{2}&\frac{1}{2}&\frac{1}{2}&\frac{-1}{2}
 \end{smallmatrix}\right)
 =
 \left(\begin{smallmatrix}
 \frac{-1}{3}&\frac{2}{3}&\frac{2}{3}&0&0&0
 \\[1pt]
 \frac{2}{3}&\frac{-1}{3}&\frac{2}{3}&0&0&0
 \\[1pt]
 \frac{2}{3}&\frac{2}{3}&\frac{-1}{3}&0&0&0
 \\[1pt]
 0&0&0&\kern2pt 1 \kern2pt&0&0
 \\[3pt]
 0&0&0&0&\kern2pt 1 \kern2pt&0
 \\[3pt]
 0&0&0&0&0&\kern2pt 1 \kern2pt
 \end{smallmatrix}\right)
 \left(\begin{smallmatrix}
 \kern2pt 1 \kern2pt&0&0&0&0&0
 \\[3pt]
 0&\kern2pt 1 \kern2pt&0&0&0&0
 \\[1pt]
 0&0&\frac{-1}{2}&\frac{1}{2}&\frac{1}{2}&\frac{1}{2}
 \\[1pt]
 0&0&\frac{1}{2}&\frac{-1}{2}&\frac{1}{2}&\frac{1}{2}
 \\[1pt]
 0&0&\frac{1}{2}&\frac{1}{2}&\frac{-1}{2}&\frac{1}{2}
 \\[1pt]
 0&0&\frac{1}{2}&\frac{1}{2}&\frac{1}{2}&\frac{-1}{2}
 \end{smallmatrix}\right),
 \\
 & \left(\begin{smallmatrix}
 \frac{-1}{3}&0&0&\frac{2}{3}&\frac{2}{3}
 \\[1pt]
 \frac{2}{3}&\frac{1}{2}&\frac{-1}{2}&\frac{1}{6}&\frac{1}{6}
 \\[1pt]
 \frac{2}{3}&\frac{-1}{2}&\frac{1}{2}&\frac{1}{6}&\frac{1}{6}
 \\[1pt]
 0&\frac{1}{2}&\frac{1}{2}&\frac{-1}{2}&\frac{1}{2}
 \\[1pt]
 0&\frac{1}{2}&\frac{1}{2}&\frac{1}{2}&\frac{-1}{2}
 \end{smallmatrix}\right)
 =
 \left(\begin{smallmatrix}
 \frac{-1}{3}&\frac{2}{3}&\frac{2}{3}&0&0
 \\[1pt]
 \frac{2}{3}&\frac{-1}{3}&\frac{2}{3}&0&0
 \\[1pt]
 \frac{2}{3}&\frac{2}{3}&\frac{-1}{3}&0&0
 \\[1pt]
 0&0&0&\kern2pt 1 \kern2pt&0
 \\[3pt]
 0&0&0&0&\kern2pt 1 \kern2pt
 \end{smallmatrix}\right)
 \left(\begin{smallmatrix}
 \kern2pt 1 \kern2pt&0&0&0&0
 \\[1pt]
 0&\frac{-1}{2}&\frac{1}{2}&\frac{1}{2}&\frac{1}{2}
 \\[1pt]
 0&\frac{1}{2}&\frac{-1}{2}&\frac{1}{2}&\frac{1}{2}
 \\[1pt]
 0&\frac{1}{2}&\frac{1}{2}&\frac{-1}{2}&\frac{1}{2}
 \\[1pt]
 0&\frac{1}{2}&\frac{1}{2}&\frac{1}{2}&\frac{-1}{2}
 \end{smallmatrix}\right),
\end{aligned}
\]
admit overlapping factorizations with overlapping subspace $\HH_C=\spn\{\psi_3\}$ and $\HH_C=\spn\{\psi_2,\psi_3\}$ respectively. The related left and right subspaces are $\HH_L=\spn\{\psi_1,\psi_2\}$, $\HH_R=\spn\{\psi_4,\psi_5,\psi_6\}$ in the first example, and $\HH_L=\spn\{\psi_1\}$, $\HH_R=\spn\{\psi_4,\psi_5\}$ in the second one. The left and right operators $U_{LC}$ and $U_{CR}$ have the same matrix representation in both cases
\[
 U_{LC} = \left(\begin{smallmatrix}
 \frac{-1}{3}&\frac{2}{3}&\frac{2}{3}
 \\[1pt]
 \frac{2}{3}&\frac{-1}{3}&\frac{2}{3}
 \\[1pt]
 \frac{2}{3}&\frac{2}{3}&\frac{-1}{3}
 \end{smallmatrix}\right),
 \qquad
 U_{CR} =
 \left(\begin{smallmatrix}
 \frac{-1}{2}&\frac{1}{2}&\frac{1}{2}&\frac{1}{2}
 \\[1pt]
 \frac{1}{2}&\frac{-1}{2}&\frac{1}{2}&\frac{1}{2}
 \\[1pt]
 \frac{1}{2}&\frac{1}{2}&\frac{-1}{2}&\frac{1}{2}
 \\[1pt]
 \frac{1}{2}&\frac{1}{2}&\frac{1}{2}&\frac{-1}{2}
 \end{smallmatrix}\right),
\]
$U_{LC}$ acting on $\HH_{LC}=\spn\{\psi_1,\psi_2,\psi_3\}$, and $U_{CR}$ acting on $\HH_{CR}=\spn\{\psi_3,\psi_4,\psi_5,\psi_6\}$ in the first example and on $\HH_{CR}=\spn\{\psi_2,\psi_3,\psi_4,\psi_5\}$ in the second one.
\hfill$\scriptstyle\blacksquare$
\end{ex}

Overlapping factorizations of unitaries will be key to establish an abstract operator version of Khrushchev's formula. For easy recognition of those unitaries giving rise to Khrushchev's like formulas, we provide a simple characterization of these overlaping factorizations in the following theorem.

\begin{thm} \label{thm:CHARAC-OVERLAP}
{\bf \small [Characterization of overlapping factorizations of unitaries]}
\newline
Let $\HH=\HH_L\oplus\HH_C\oplus\HH_R$ be an orthogonal decomposition of a Hilbert space, and let $P_A$ denote the orthogonal projection of $\HH$ onto $\HH_A$. Then, a unitary operator $U$ on $\HH$ has an $\HH_C$-overlapping factorization $U=(U_{LC}\oplus\1_{\HH_R})(\1_{\HH_L}\oplus U_{CR})$ if and only if
\begin{align} \label{eq:charac-overlap}
 P_RUP_L=\0_\HH \; \text{ and } \; \rank P_{LC}UP_{CR}=\rank P_C.
\end{align}
\noindent Moreover, in this case the factorization is unique up to a unitary $U_C$ on $\HH_C$, i.e., any other factorization is of the form $\hat{U}_{LC}=U_{LC}(\1_{\HH_L}\oplus U_C)$ and {\rm $\hat{U}_{CR}=(U_C^\dag\oplus\1_{\HH_R})U_{CR}$}. When either $\dim\HH_L<\infty$ or $\dim\HH_R<\infty$ the rank condition in \eqref{eq:charac-overlap} can be omitted.
\end{thm}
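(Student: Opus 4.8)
The plan is to prove the two implications and the uniqueness claim separately; once the relevant subspaces are identified, everything reduces to elementary Hilbert-space geometry. Throughout I would set $\HH_{LC}=\HH_L\oplus\HH_C$, $\HH_{CR}=\HH_C\oplus\HH_R$, write $P_{LC}=P_L+P_C$, $P_{CR}=P_C+P_R$, and record the elementary fact that $P_RUP_L=\0_\HH$ is equivalent to $U\HH_L\subseteq\HH_{LC}$.

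For the direction ``$\Rightarrow$'' the computation is short: writing $U=AB$ with $A=U_{LC}\oplus\1_{\HH_R}$ and $B=\1_{\HH_L}\oplus U_{CR}$, the fact that $A$ fixes $\HH_R$ and $B$ fixes $\HH_L$ pointwise gives $P_RA=P_R$ and $BP_L=P_L$, hence $P_RUP_L=P_RBP_L=\0_\HH$; and since $BP_{CR}$ has range $\HH_{CR}$, the range of $P_{LC}UP_{CR}$ equals $P_{LC}A\HH_{CR}=P_{LC}(U_{LC}\HH_C\oplus\HH_R)=U_{LC}\HH_C$, of dimension $\dim\HH_C=\rank P_C$.

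For ``$\Leftarrow$'' I would build $U_{CR}$ first. From $U\HH_L\subseteq\HH_{LC}$ one gets $U^\dag\HH_R\subseteq\HH_{CR}$ (because $\<U^\dag w\,|\,v\>=\<w\,|\,Uv\>=0$ for $v\in\HH_L$, $w\in\HH_R$), so $\iota:=U^\dag|_{\HH_R}$ is an isometry of $\HH_R$ into $\HH_{CR}$. The key point is that $\iota$ extends to a unitary, which I will call $U_{CR}^\dag$, on $\HH_{CR}$: granting this, $V:=U(\1_{\HH_L}\oplus U_{CR}^\dag)$ fixes $\HH_R$ pointwise (for $w\in\HH_R$, $Vw=UU^\dag w=w$), hence preserves $\HH_{LC}=\HH_R^\perp$, so $V=U_{LC}\oplus\1_{\HH_R}$ for a unitary $U_{LC}$ on $\HH_{LC}$ and $U=V(\1_{\HH_L}\oplus U_{CR})$ is the desired overlapping factorization. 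Now the isometry $\iota$ admits a unitary extension to $\HH_{CR}$ iff $\dim(\HH_{CR}\ominus\iota\HH_R)=\dim(\HH_{CR}\ominus\HH_R)=\dim\HH_C$, so the whole content of the theorem is that the rank hypothesis delivers precisely this equality. I would obtain it by observing that $U\HH_L$, being contained in $\HH_{LC}$ with zero $\HH_R$-component, reduces both $P_{LC}$ and $P_R$, so $(U\HH_L)^\perp=\HH_R\oplus(\HH_{LC}\ominus U\HH_L)$. On the one hand this gives $\rank P_{LC}UP_{CR}=\dim P_{LC}(U\HH_L)^\perp=\dim(\HH_{LC}\ominus U\HH_L)$; on the other hand $\HH_{CR}\ominus\iota\HH_R=\{x\in\HH_{CR}:Ux\in\HH_{LC}\}=U^\dag(\HH_{LC}\cap U\HH_{CR})=U^\dag(\HH_{LC}\ominus U\HH_L)$, which has the same dimension. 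Thus $\rank P_{LC}UP_{CR}=\rank P_C$ is exactly the condition making $\iota$ extendable. Finally, if $\dim\HH_L<\infty$ then $\dim(\HH_{LC}\ominus U\HH_L)=\dim\HH_{LC}-\dim\HH_L=\dim\HH_C$ automatically, and if $\dim\HH_R<\infty$ then $\dim(\HH_{CR}\ominus\iota\HH_R)=\dim\HH_{CR}-\dim\HH_R=\dim\HH_C$ automatically, so in either case the rank hypothesis is redundant.

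For uniqueness, given two such factorizations the operator $W:=(\hat U_{LC}\oplus\1_{\HH_R})^\dag(U_{LC}\oplus\1_{\HH_R})=(\1_{\HH_L}\oplus\hat U_{CR})(\1_{\HH_L}\oplus U_{CR})^\dag$ is unitary and, by its two expressions, fixes $\HH_R$ and $\HH_L$ pointwise; hence it preserves $\HH_C=(\HH_L\oplus\HH_R)^\perp$ and has the form $W=\1_{\HH_L}\oplus U_C^\dag\oplus\1_{\HH_R}$ for a unitary $U_C$ on $\HH_C$. Comparing the two expressions for $W$ then reads off $\hat U_{LC}=U_{LC}(\1_{\HH_L}\oplus U_C)$ and $\hat U_{CR}=(U_C^\dag\oplus\1_{\HH_R})U_{CR}$, and a direct check shows that any such pair factorizes $U$. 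The step I expect to be the real obstacle is the infinite-dimensional bookkeeping in ``$\Leftarrow$'': one must resist cancelling dimensions and instead route the count for $\HH_{CR}\ominus\iota\HH_R$ through the reducing subspace $U\HH_L$, which is precisely what forces the rank condition into the statement.
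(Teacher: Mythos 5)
Your argument is correct, and in the converse direction it is organized differently from the paper's. The paper splits $U=UP_L+P_RU+P_{LC}UP_{CR}$ into three partial isometries, identifies $K^\dag K$ and $KK^\dag$ as the projections onto the defect spaces $\HH_{CR}\ominus U^\dag\HH_R$ and $\HH_{LC}\ominus U\HH_L$, invokes the rank condition to produce a unitary $W$ from the first of these onto $\HH_C$, and then assembles \emph{both} factors symmetrically as explicit sums of partial isometries, checking unitarity against three orthogonal decompositions of $\HH$. You instead build only one factor -- the unitary extension of the isometry $U^\dag|_{\HH_R}\colon\HH_R\to\HH_{CR}$ -- and obtain the other for free, since $U(\1_{\HH_L}\oplus U_{CR}^\dag)$ fixes $\HH_R$ pointwise and is therefore automatically of the form $U_{LC}\oplus\1_{\HH_R}$. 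The dimension count you route through $U\HH_L$ is the same count the paper performs via $K^\dag K$ and $KK^\dag$ (your $\HH_{CR}\ominus U^\dag\HH_R$ and $\HH_{LC}\ominus U\HH_L$ are exactly the paper's $\HH_i$ and $\HH_f$, and your choice of unitary extension is the paper's $W$, which is also the source of the $U_C$-ambiguity in both treatments), so the two proofs share their essential content; what yours buys is economy and a transparent reading of the rank condition as precisely the obstruction to extending $U^\dag|_{\HH_R}$ to a unitary of $\HH_{CR}$, while the paper's symmetric construction keeps the partial isometry $K=P_{LC}UP_{CR}$ explicit, which it reuses in the surrounding discussion. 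Two small points of hygiene: the identity $\rank P_{LC}UP_{CR}=\dim(\HH_{LC}\ominus U\HH_L)$ deserves the one-line justification that $U\HH_{CR}=(U\HH_L)^\perp$ and that $P_{LC}$ maps $(U\HH_L)^\perp=\HH_R\oplus(\HH_{LC}\ominus U\HH_L)$ onto the closed subspace $\HH_{LC}\ominus U\HH_L$ (so the range is closed and the rank is unambiguous); and the cardinal subtractions in the final paragraph should be phrased as ``the orthogonal complement of a finite-dimensional subspace of $\HH_{LC}$ has Hilbert dimension $\dim\HH_C$'' rather than as literal differences, since $\dim\HH_{LC}$ may be infinite. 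Neither affects the validity of the proof.
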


\begin{proof} Let us prove separately each of the assertions.

\noindent {\it Factorization\;$\Rightarrow$\,\eqref{eq:charac-overlap}:}
Using the overlapping factorization we get $P_RUP_L=P_RP_L=\0_\HH$, and $P_{LC}UP_{CR}=U_{LC}U_{CR}=U_{LC}(P_L+P_C+P_R)U_{CR}=U_{LC}P_CU_{CR}$, which has the same rank as $P_C$, since unitary factors do not change the rank.

\noindent {\it \eqref{eq:charac-overlap}\,$\Rightarrow$\,{\it Factorization}:}
We split $U$ into three sumands,
\[
\begin{aligned}
 U & = P_{LC}UP_L + P_RUP_{CR} + P_{LC}UP_{CR} \\
   & = UP_L + P_RU + P_{LC}UP_{CR} \\
   & = K_L \kern7pt + K_R \kern7pt + K.
\end{aligned}
\]
Here at the first equality we used $P_RUP_L=\0_\HH$, and again in the second line by adding this zero term twice and using $P_{LC}+P_R=P_L+P_{CR}=\1_\HH$. The third line introduces the abbreviations we want to use in this proof.
Since $K_L^\dag K_L=P_L$ and $K_RK_R^\dag=P_R$ are orthogonal projections, the operators $K_R$ and $K_L$ are partial isometries.
The same is true of $K$. Actually, the identities
\begin{equation} \label{eq:K-PI}
\begin{aligned}
   K^\dag K & =  P_{CR}U^\dag(\1_\HH-P_R)UP_{CR} = P_{CR}-K_R^\dag K_R
   = P_{CR}-U^\dag P_RU,
   \\
   K K^\dag & = P_{LC}U(\1_\HH-P_L)U^\dag P_{LC} = P_{LC}-K_LK_L^\dag
   = P_{LC}-UP_LU^\dag,
\end{aligned}
\end{equation}
show that $K^\dag K$ and $KK^\dag$ are the orthogonal projections onto $\HH_i=\HH_{CR}\ominus\rank(U^\dag P_RU)$ and $\HH_f=\HH_{LC}\ominus\rank(UP_LU^\dag)$ respectively, which are the initial and final subspaces of the partial isometry $K$.
We now have three decompositions of $\1_\HH$ into orthogonal projections,
\[
 P_L + K^\dag K + U^\dag P_RU,
 \qquad
 P_L + P_C + P_R,
 \qquad
 UP_LU^\dag + KK^\dag + P_R,
\]
which yield three orthogonal decompositions of $\HH$,
\begin{equation} \label{eq:OD}
 \HH_L \oplus \HH_i \oplus U^\dag\HH_R,
 \qquad
 \HH_L \oplus \HH_C \oplus \HH_R,
 \qquad
 U\HH_L \oplus \HH_f \oplus \HH_R.
\end{equation}
By assumption $K$ and hence the projections $K^\dag K$ and $KK^\dag$ have the same rank as $P_C$. Therefore, there is a unitary operator $W \colon \HH_i \to\HH_C$. We can use this to define unitary operators connecting the above orthogonal decompositions, in which one piece is given by the identity, one by a restriction of $U$, and one involves $W$. Explicitly, we can define $U_{LC}$ and $U_{CR}$ by
\begin{equation} \label{eq:FPI}
\begin{aligned}
   \1_{\HH_L} \oplus U_{CR} & =
   P_L\1_\HH P_L + P_C W K^\dag K + P_R U (U^\dag P_RU) =
   P_L + WK^\dag K + P_RU,
   \\
   U_{LC} \oplus \1_{\HH_R} & =
   (UP_LU^\dag) U P_L + KK^\dag (KW^\dag) P_C + P_R\1_\HH P_R =
   UP_L + KW^\dag P_C + P_R.
\end{aligned}
\end{equation}
Unitarity of these operators is immediate because the terms in the right hand side of \eqref{eq:FPI} are partial isometries connecting the orthogonal decompositions \eqref{eq:OD}, i.e., when restricted to their initial and final subspaces they become the following kind of unitaries
\[
\begin{gathered}
 P_L \colon \HH_L \to \HH_L,
 \qquad
 W K^\dag K \colon \HH_i \to \HH_C,
 \qquad
 P_R U \colon U^\dag\HH_R \to \HH_R,
 \\
 U P_L \colon \HH_L \to U\HH_L,
 \qquad
 KW^\dag P_C \colon \HH_C \to \HH_f,
 \qquad
 P_R \colon \HH_R \to \HH_R.
\end{gathered}
\]
Furthermore, the operators $U_{LC}$ and $U_{CR}$ provide the desired factorization since
\[
 (U_{LC}\oplus\1_{\HH_R})(\1_{\HH_L}\oplus U_{CR}) =
 UP_L + KW^\dag WK^\dag K + P_RU = K_L + K + K_R = U.
\]

\noindent{\it Uniqueness:} Consider another $\HH_C$-overlapping factorization of $U$ given by the unitaries $\hat{U}_{LC}$ and $\hat{U}_{CR}$. Then, the equality
\[
 (\hat{U}_{LC}\oplus\1_{\HH_R})(\1_{\HH_L}\oplus\hat{U}_{CR}) =
 (U_{LC}\oplus\1_{\HH_R})(\1_{\HH_L}\oplus U_{CR})
\]
leads to
\[
 U_{LC}^\dag\hat{U}_{LC}\oplus\1_{\HH_R} =
 \1_{\HH_L}\oplus U_{CR}\hat{U}_{CR}^\dag.
\]
This identity is satisfied only if $U_{LC}^\dag\hat{U}_{LC}=\1_{\HH_L}\oplus U_C$ and $U_{CR}\hat{U}_{CR}^\dag=U_C\oplus\1_{\HH_R}$ for some unitary $U_C$ on $\HH_C$. Therefore, $\hat{U}_{LC}=U_{LC}(\1_{\HH_L}\oplus U_C)$ and $\hat{U}_{CR}=(U_C^\dag\oplus\1_{\HH_R})U_{CR}$, a relation which obviously always yields an $\HH_C$-overlapping factorization of $U$.

\noindent{\it Omission of rank condition:} We know that the condition $P_RUP_L=\0_\HH$ implies \eqref{eq:K-PI} so that $K$ is a partial isometry and $\rank K = \rank(K^\dag K) = \rank(KK^\dag)$. If $\dim\HH_R<\infty$, taking traces in the first equation of \eqref{eq:K-PI} gives
\[
 \rank K = \rank(K^\dag K) = \tr(K^\dag K) = \tr P_{CR} - \tr P_R = \rank P_C.
\]
From the second equation of \eqref{eq:K-PI} we obtain in a similar fashion that $\rank K = \rank P_C$ if $\dim\HH_L<\infty$.
\end{proof}

The previous characterization shows that the simplest non-trivial operators admitting overlapping factorizations are given by unitary band matrices. In particular, this links Khrushchev's formulas to the index theory of QWs and general unitary matrices \cite{indexpap, kitaev}. However, a band structure is not necessary for the existence of this kind of decomposition, as Example \ref{ex:G3.G4} illustrates.

Now we can state our main result on factorizations of Schur functions.

\begin{thm} \label{thm:GEN-K}
{\bf \small [Khrushchev's formula for overlapping factorizations of unitaries]}
\newline
Let $U = (U_{LC} \oplus \1_{\HH_R})(\1_{\HH_L} \oplus U_{CR})$ be an $\HH_C$-overlapping factorization of a unitary operator $U$ on a Hilbert space $\HH$. Then, any subspace $V = V_L \oplus \HH_C \oplus V_R$ of $\HH$, with $V_A$ a subspace of $\HH_A$, has a $U$-Schur function which factorizes as
\[
 f_V =  (\1_{V_L} \oplus f^R_{V_{CR}})(f^L_{V_{LC}} \oplus \1_{V_R}),
\]
where $f^L_{V_{LC}}$ is the $U_{LC}$-Schur function of $V_{LC} = V_L \oplus \HH_C$ and $f^R_{V_{CR}}$ is the $U_{CR}$-Schur function of $V_{CR} = \HH_C \oplus V_R$.
In particular, the $U$-Schur function of $\HH_C$ factorizes as $f_{\HH_C}=f^R_{\HH_C}f^L_{\HH_C}$.
\end{thm}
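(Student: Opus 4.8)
The plan is to read the factorization off the resolvent representation of Schur functions in Lemma~\ref{lem:STay}, by first producing a matching factorization of $U-z\perpo{P}$ and then checking that the ``missing'' decoupling between the two resolvent factors costs nothing once everything is compressed to $V$. The diagrammatic path counting of Sections~\ref{sec:GEN-PC}--\ref{sec:CMV-PC} makes the shape of this argument transparent, but the argument itself is purely algebraic.

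Write $P=P_V$ and note that, because $V$ contains all of $\HH_C$, the complementary projection splits as $\perpo{P}=\1_\HH-P=R_L+R_R$, with $R_L\le P_L$ the projection onto $\HH_L\ominus V_L$ and $R_R\le P_R$ the projection onto $\HH_R\ominus V_R$; in particular $R_LR_R=\0_\HH$ and $\rank R_L,\rank R_R\perp V$. Put $A=U_{LC}\oplus\1_{\HH_R}$ and $B=\1_{\HH_L}\oplus U_{CR}$, so $U=AB$. Since $A$ restricts to the identity on $\HH_R$ and $B$ to the identity on $\HH_L$, one has $AR_R=R_R$ and $R_LB=R_L$, hence
\[
 (A-zR_L)(B-zR_R)=AB-zAR_R-zR_LB+z^2R_LR_R=AB-zR_R-zR_L=U-z\perpo{P}.
\]
The two factors are block diagonal: $A-zR_L$ acts as $U_{LC}-zR_L$ on $\HH_{LC}$ and as $\1$ on $\HH_R$, and $B-zR_R$ acts as $\1$ on $\HH_L$ and as $U_{CR}-zR_R$ on $\HH_{CR}$; moreover the restriction of $R_L$ to $\HH_{LC}$ is the projection onto the complement of $V_{LC}=V_L\oplus\HH_C$, and the restriction of $R_R$ to $\HH_{CR}$ is the projection onto the complement of $V_{CR}=\HH_C\oplus V_R$. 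Hence the inverses $G:=(A-zR_L)^{-1}$ and $H:=(B-zR_R)^{-1}$, which exist for $|z|<1$ exactly as the resolvent in Lemma~\ref{lem:STay}, are again block diagonal, $G$ being the identity on $\HH_R$ and $H$ the identity on $\HH_L$.

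Next I would compress to $V$. From the operator identities $PP_{LC}=P_{V_{LC}}$, $PP_{CR}=P_{V_{CR}}$ and Lemma~\ref{lem:STay} applied to $U_{LC}$ on $V_{LC}$ and to $U_{CR}$ on $V_{CR}$, a short computation gives
\[
 PGP\big|_V=f^L_{V_{LC}}\oplus\1_{V_R},\qquad PHP\big|_V=\1_{V_L}\oplus f^R_{V_{CR}},
\]
where $V$ is decomposed as $V_{LC}\oplus V_R$ in the first equality and as $V_L\oplus V_{CR}$ in the second. Since $(U-z\perpo{P})^{-1}=HG$, Lemma~\ref{lem:STay} yields $f_V=PHGP$ on $V$, so it remains to prove that inserting $P$ between $H$ and $G$ is harmless, i.e.\ $PH\perpo{P}GP=0$ on $V$. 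Splitting $\perpo{P}=R_L+R_R$: the $R_L$ term vanishes because $HR_L=R_L$ ($H$ is the identity on $\HH_L\supseteq\rank R_L$) while $PR_L=0$ ($\rank R_L\perp V$); the $R_R$ term vanishes because $GP$ maps $V$ into $\HH_{LC}\oplus V_R$, on which $R_R$ is zero, so that $R_RGP=0$. Therefore
\[
 f_V=PHGP=(PHP)(PGP)=(\1_{V_L}\oplus f^R_{V_{CR}})(f^L_{V_{LC}}\oplus\1_{V_R})
\]
on $V$, and the choice $V_L=V_R=\{0\}$ gives $f_{\HH_C}=f^R_{\HH_C}f^L_{\HH_C}$.

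I expect the decisive step to be spotting the operator factorization $U-z\perpo{P}=(A-zR_L)(B-zR_R)$ --- this is precisely where the overlapping hypothesis and the requirement $\HH_C\subseteq V$ enter --- after which the block structure of the two factors and the orthogonality of $\rank R_L,\rank R_R$ to $V$ reduce the rest to bookkeeping. The identity $PH\perpo{P}GP=0$ is the point most deserving of a careful write-up: it expresses that the two subsystems need not be genuinely decoupled, only overlappingly factored.
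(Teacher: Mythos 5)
Your proof is correct, and it takes a genuinely different (and leaner) route than the paper's. The paper works with the first return generating function $a_V(z)=zf_V^\dag(z)$, expands it as $zPUP+z^2PU\perpo{P}(\1_\HH-z\perpo{P}U\perpo{P})^{-1}\perpo{P}UP$, splits $\perpo{P}U\perpo{P}=T_L+T_R+T$ using the overlapping structure, inverts $\1_\HH-z\perpo{P}U\perpo{P}$ explicitly from the relations $T_LT_R=T_RT_L=TT_L=T_RT=T^2=\0_\HH$, and then reassembles the pieces and re-identifies the two factors by substituting projections on $\HH_{LC}$ and $\HH_{CR}$. You instead stay with the resolvent form of Lemma~\ref{lem:STay}, $f_V=P(U-z\perpo{P})^{-1}P$, and the decisive step is the single operator identity $U-z\perpo{P}=(U_{LC}\oplus\1_{\HH_R}-zR_L)(\1_{\HH_L}\oplus U_{CR}-zR_R)$, which does not appear in the paper; after that, the block-diagonality of the two factors, the compressions $PGP$ and $PHP$ identified again via Lemma~\ref{lem:STay}, and the vanishing of the cross term $PH\perpo{P}GP$ (your two observations $HR_L=R_L$, $PR_L=\0_\HH$ and $R_RGP=\0_\HH$ are each correct) finish the argument, with the factor ordering $f_V=(PHP)(PGP)$ matching the statement. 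What your route buys is brevity and transparency: one factorization identity plus one decoupling check replaces the paper's explicit resolvent computation and the subsequent bookkeeping that translates $a^L$, $a^R$ back into $U_{LC}$- and $U_{CR}$-quantities. What the paper's route buys is that, by working with $a_V$ and the operators $T_L,T_R,T$, it tracks the first-return amplitudes directly and thus stays closest to the path-counting/dynamical interpretation developed in Sections~\ref{sec:GEN-PC} and \ref{sec:CMV-PC}. One cosmetic remark: where you write $\rank R_L$ you mean the range of $R_L$ (the paper commits the same abuse of notation in the proof of Theorem~\ref{thm:CHARAC-OVERLAP}); also, as in Lemma~\ref{lem:STay}, the subspaces $V_L$, $V_R$ should be taken closed.
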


\begin{proof}
The relation given in \eqref{eq:a-U} allows us to translate results from first return generating functions to Schur functions and vice versa. For convenience, we will work with the former ones. Then, the factorization property stated in the theorem can be rewritten as
\begin{equation} \label{eq:fac-a}
 za_V(z) = (a^L_{V_{LC}}(z) \oplus z\1_{V_R}) (z\1_{V_L} \oplus a^R_{V_{CR}}(z)),
\end{equation}
where $a_V(z)=zf_V^\dag(z)$, $a^L_{V_{LC}}(z)=zf^{L\dag}_{V_{LC}}(z)$ and $a^R_{V_{CR}}(z)=zf^{R\dag}_{V_{CR}}(z)$ are respectively the first return generating functions of $V$, $V_{LC}$ and $V_{CR}$ with respect to $U$, $U_{LC}$ and $U_{CR}$.

If $P$ and $\perpo{P}=\1_\HH-P$ are the orthogonal projections of $\HH$ onto $V$ and $V^\bot$ respectively, from \eqref{eq:f-U} and \eqref{eq:a-U} we find that\begin{equation} \label{eq:GF-res-2}
 a_V(z) = \sum_{n\ge1} z^n P U (\perpo{P}U)^{n-1} P
 = z P U P
 + z^2 P U \perpo{P} \left(\1_\HH - z\perpo{P}U\perpo{P}\right)^{-1} \perpo{P} U P.
\end{equation}
Denoting by $P_A$ and $Q_A$ the orthogonal projections of $\HH$ onto $\HH_A$ and $V_A$ respectively, we can express $U=U_{LC}P_L+P_RU_{CR}P_{CR}+U_{LC}U_{CR}P_{CR}$ and $\perpo{P}=(P_L-Q_L)+(P_R-Q_R)$. This allows us to write
\[
 \perpo{P}U\perpo{P} = T_L + T_R + T,
 \qquad
 \left\{
 \begin{aligned}
  & T_L = (P_L-Q_L) U_{LC} (P_L-Q_L),
  \\
  & T_R = (P_R-Q_R) U_{CR} (P_R-Q_R),
  \\
  & T = (P_L-Q_L) U_{LC} U_{CR} (P_R-Q_R).
 \end{aligned}
 \right.
\]
Since $T_LT_R=T_RT_L=TT_L=T_RT=T^2=\0_\HH$ and $\|T_L\|,\|T_R\|,\|T\|\le1$, we get for $|z|<1$,
\[
\begin{aligned}
 & (\1_\HH - z\perpo{P}U\perpo{P})^{-1}
 = (\1_\HH - zT_L - zT_R - zT)^{-1}
 = [(\1_\HH-zT_R)(\1_\HH-zT_L) - zT]^{-1}
 \\
 & \kern15pt
 = (\1_\HH-zT_L)^{-1} [\1_\HH - (\1_\HH-zT_R)^{-1}zT(\1_\HH-zT_L)^{-1}]^{-1}
 (\1_\HH-zT_R)^{-1}
 \\
 & \kern15pt
 = (\1_\HH-zT_L)^{-1} (\1_\HH - zT)^{-1} (\1_\HH-zT_R)^{-1}
 = (\1_\HH-zT_L)^{-1} (\1_\HH + zT) (\1_\HH-zT_R)^{-1}
 \\
 & \kern15pt
 = P_L(\1_\HH-zT_L)^{-1}zT(\1_\HH-zT_R)^{-1}P_R + P_L(\1_\HH-zT_L)^{-1}P_L
 + P_R(\1_\HH-zT_R)^{-1}P_R + P_C.
\end{aligned}
\]
Inserting this identity into \eqref{eq:GF-res-2} gives
\[
\begin{aligned}
 a_V(z) & = zPUP
 + z^3 PU(P_L-Q_L) (\1_\HH-zT_L)^{-1} T (\1_\HH-zT_R)^{-1} (P_R-Q_R)UP
 \\
 & \kern-10pt
 + z^2 PU \left[ (P_L-Q_L) (\1_\HH-zT_L)^{-1} (P_L-Q_L)
 + (P_R-Q_R) (\1_\HH-zT_R)^{-1} (P_R-Q_R) \right] UP.
\end{aligned}
\]
Moreover, from the splitting $P=Q_L+P_C+Q_R$ we find that
\[
\begin{gathered}
 PUP=Q_{LC}U_{LC}Q_L + Q_RU_{CR}Q_{CR} + Q_{LC}U_{LC}U_{CR}Q_{CR},
 \\
 (P_L-Q_L)UP = (P_L-Q_L)U_{LC}Q_L + (P_L-Q_L)U_{LC}U_{CR}Q_{CR},
 \\
 PU(P_R-Q_R) = Q_RU_{CR}(P_R-Q_R) + Q_{LC}U_{LC}U_{CR}(P_R-Q_R),
 \\
 PU(P_L-Q_L) = Q_{LC}U_{LC}(P_L-Q_L),
 \quad
 (P_R-Q_R)UP = (P_R-Q_R)U_{CR}Q_{CR}.
\end{gathered}
\]
Rewriting $U_{LC}U_{CR}=U_{LC}Q_{LC}Q_{CR}U_{CR}$ in the above relations and combining them with the preceding expression of $a_V(z)$ yields
\begin{align}
 & \label{eq:factoriz} \kern100pt za_V(z) = (a^L(z)+zQ_R) (zQ_L+a^R(z)),
 \\
 & \label{eq:factores} \begin{aligned}
 	a^L(z) & = z Q_{LC} U_{LC} Q_{LC}
 	+ z^2 Q_{LC} U_{LC} (P_L-Q_L) (\1_\HH-zT_L)^{-1} (P_L-Q_L) U_{LC} Q_{LC},
 	\\
 	a^R(z) & = z Q_{CR} U_{CR} Q_{CR}
 	+ z^2 Q_{CR} U_{CR} (P_R-Q_R) (\1_\HH-zT_R)^{-1} (P_R-Q_R) U_{CR} Q_{CR}.
 \end{aligned}
\end{align}
Let $S_A$ and $\perpo{S}_A=\1_{\HH_A}-S_A$ be the orthogonal projections of $\HH_A$ onto $V_A$ and $V_A^\bot$ respectively. Considering $a^L(z)$ and $a^R(z)$ as functions with values in operators on $\HH_{LC}$ and $\HH_{CR}$ respectively amounts to substituting $Q_{LC} \to S_{LC}$, $Q_{CR} \to S_{CR}$, $P_L-Q_L=P_{LC}-Q_{LC} \to \perpo{S}_{LC}$ and $P_R-Q_R=P_{CR}-Q_{CR} \to \perpo{S}_{CR}$ in \eqref{eq:factores}, so that
\[
\begin{aligned}
 a^L(z) & = z S_{LC} U_{LC} S_{LC}
 + z^2 S_{LC} U_{LC} \perpo{S}_{LC}
 (\1_{\HH_{LC}}-z\perpo{S}_{LC}U_{LC}\perpo{S}_{LC})^{-1}
 \perpo{S}_{LC} U_{LC} S_{LC},
 \\
 a^R(z) & = z S_{CR} U_{CR} S_{CR}
 + z^2 S_{CR} U_{CR} \perpo{S}_{CR}
 (\1_{\HH_{CR}}-z\perpo{S}_{CR}U_{CR}\perpo{S}_{CR})^{-1}
 \perpo{S}_{CR} U_{CR} S_{CR}.
\end{aligned}
\]
According to \eqref{eq:GF-res-2}, this identifies $a^L=a^L_{V_{LC}}$ and $a^R=a^R_{V_{CR}}$.

Finally, since $a_V$, $a^L_{V_{LC}}$ and $a^R_{V_{CR}}$ must be actually understood as functions with values in operators on $V$, $V_{LC}$ and $V_{CR}$ respectively, we should rewrite \eqref{eq:factoriz} as \eqref{eq:fac-a}.
\end{proof}

We point out that the notions of transfer (or characteristic) operator-valued function and its factorization, starting with work of Liv\v{s}ic, Brodski\u{\i}, Potapov, Sakhnovich and others (see \cite{Ka-DT} and references therein), has played a very important role in areas of analysis that stretch all the way from linear system theory to the study of Wiener-Hopf type problems. We expect to explore connections with these topics in a future publication.

Let us illustrate the general result of Theorem \ref{thm:GEN-K} by reconsidering the overlapping factorizations of Example \ref{ex:G3.G4}.

\begin{ex} \label{ex:G3.G4-bis}
Let $U$ be the first of the unitary operators of Example \ref{ex:G3.G4}, with overlapping subspace $\HH_C=\spn\{\psi_3\}$. Using \eqref{eq:f-U} we can compute the Schur function $f_V$ for any subspace $V = V_L \oplus \HH_C \oplus V_R$ with $V_L \subset \HH_L=\spn\{\psi_1,\psi_2\}$ and $V_R \subset \HH_R=\spn\{\psi_4,\psi_5,\psi_6\}$. We present below two of these results,
\[
\begin{aligned}
 & V_L=V_R=\{0\}, & \quad & V=\spn\{\psi_3\}, & \quad &
 f_{\psi_3}(z) = \frac{2z-1}{2-z} \frac{3z-1}{3-z},
 \\[3pt]
 & V_L=\{0\}, V_R=\spn\{\psi_4\}, & & V=\spn\{\psi_3,\psi_4\}, & &
 f_V(z) =
 \begin{pmatrix}
 \frac{z-1}{2} & \frac{z+1}{2} \\[3pt] \frac{z+1}{2} & \frac{z-1}{2}
 \end{pmatrix}
 \begin{pmatrix}
 \frac{3z-1}{3-z} & 0 \\[3pt] 0 & 1
 \end{pmatrix}.
\end{aligned}
\]
In the second case the matrix representation of $f_V$ is calculated in the basis $\{\psi_3,\psi_4\}$. The computation of the Schur functions with respect to $U_{LC}$ and $U_{CR}$ shows that the first factorization is $f_{\psi_3} = f_{\psi_3}^R f_{\psi_3}^L$, while the second one corresponds to $f_V = f_V^R \, (f_{\psi_3}^L \oplus 1)$.

Assume now that $U$ is the second operator of Example \ref{ex:G3.G4}. A Schur factorization appears for any subspace $V = V_L \oplus \HH_C \oplus V_R$, with $\HH_C=\spn\{\psi_2,\psi_3\}$, $V_L \subset \HH_L=\spn\{\psi_1\}$ and $V_R \subset \HH_R=\spn\{\psi_3,\psi_4,\psi_5\}$. For instance, when $V=\HH_C$, similar matrix computations using \eqref{eq:f-U} and the basis $\{\psi_2,\psi_3\}$ give
\[
 f_{\HH_C}(z) =
 \begin{pmatrix}
 \frac{z-1}{2} & \frac{z+1}{2} \\[3pt] \frac{z+1}{2} & \frac{z-1}{2}
 \end{pmatrix}
 \begin{pmatrix}
 \frac{z-1}{z+3} & 2\frac{z+1}{z+3} \\[3pt] 2\frac{z+1}{z+3} & \frac{z-1}{z+3}
 \end{pmatrix},
\]
which corresponds to the factorization $f_{\HH_C}=f_{\HH_C}^Rf_{\HH_C}^L$.
\hfill$\scriptstyle\blacksquare$
\end{ex}

As we will see, the standard Khrushchev formula, as well as new matrix versions of this one, can be derived as mere applications of the previous theorem, which therefore can be viewed as an abstract operator generalization of Khrushchev's formula. The standard operator approach to the scalar Khrushchev formula uses a different strategy: the factorization of a Schur function comes from a decoupling of a unitary operator as a direct sum of other two ones by introducing a perturbation. In contrast, our approach avoids the need for perturbations by allowing for a factorization with an overlapping subspace instead of enforcing a strict decoupling. It is precisely the absence of perturbations that makes the new proof simpler and easier to generalize.

The Schur factorization of Theorem \ref{thm:GEN-K} is a direct consequence of the Taylor expansion for the Schur function of a spectral measure, together with the simple assumption of an overlapping factorization for unitaries. Both, the proof of such an expansion and the proof of the preceding theorem are surprisingly simple and are of purely algebraic nature. Nevertheless, to guess the operator expression for the Taylor coefficients of a Schur function as well as the statement of the above operator generalization of Khrushchev's formula is something that, at least to us, would have been unimaginable without the guide provided by the physical ideas behind quantum recurrence, and the path counting approach to Khrushchev's formula which is illustrated below.

\section{Khrushchev's formula for unitaries via path counting}
\label{sec:GEN-PC}

Although not as efficient as the formal proof of Theorem \ref{thm:GEN-K}, path counting allows us to ``get our hands on" the ``miracle" of Khrushchev's formula and to improve the understanding of its dynamical meaning. The path counting approach requires the choice of an orthonormal basis $\{\psi_j\}$ of $\HH$ obtained by enlarging a basis of the subspace $V$. First, we will identify a unitary operator $U$ on $\HH$ with its matrix representation $U=(U_{j,k})$ in the basis $\{\psi_j\}$. Then, we introduce a diagrammatic representation of $U$ as a discrete time quantum evolution operator: each coefficient $U_{j,k}=\langle \psi_j|U\psi_k\rangle$ will be understood as the amplitude of the one-step transition from $\psi_k$ to $\psi_j$, and it will be pictorially represented as
\begin{equation} \label{tz:1step}
\begin{tikzpicture}[baseline=(current bounding box.center),->,>=stealth',shorten >=1pt,auto,semithick]
  \tikzstyle{every state}=[fill=blue,draw=none,text=white,inner sep=0pt,minimum size=0.8cm]

  \node (V) {\large $\psi_j \ot \psi_k \kern5pt \equiv$};
  \node[state] (j) [right of=V,node distance=1.8cm]{$\boldsymbol{j}$};
  \node[state] (k) [right of=j,node distance=2cm] {$\boldsymbol{k}$};

  \path (k) edge	node {$U_{j,k}$} (j);

\end{tikzpicture}
\end{equation}

The natural tool for this approach is the first return generating function $a_V(z)=zf_V^\dag(z)$, rather than the Schur function $f_V(z)$ itself. Bearing in mind the expression of the Taylor coefficient $a_{V,n}$ given in \eqref{eq:a-U}, its matrix coefficients read as
\begin{equation} \label{eq:PATH}
 \langle\psi_j|a_{V,n}\psi_k \rangle = \sum_{\psi_{k_i}\notin V}
 U_{j,k_{n-1}}U_{k_{n-1},k_{n-2}}\cdots\:U_{k_2,k_1}U_{k_1,k}.
\end{equation}
Resorting again to quantum nomenclature, we will name the quantity
\[
 \ms{A}(\psi_j \ot \psi_{k_{n-1}} \ot \cdots \ot \psi_{k_1} \ot \psi_k) :=
 U_{j,k_{n-1}}U_{k_{n-1},k_{n-2}}\cdots\:U_{k_2,k_1}U_{k_1,k}
\]
the amplitude of the path $\Lambda \equiv \psi_j \ot \psi_{k_{n-1}} \ot \cdots \ot \psi_{k_1} \ot \psi_k$ of length $n$. The sum in \eqref{eq:PATH} is over the amplitudes $\ms{A}(\Lambda)$ of all the $n$-step paths $\Lambda$ connecting the vectors $\psi_k$ and $\psi_j$ with no vector from the subspace $V$ appearing at the intermediate steps. Since each amplitude $\ms{A}(\Lambda)$ appearing in $\langle\psi_j|a_V(z)\psi_k \rangle$ carries a factor $z^{\text{length}(\Lambda)}$, we conclude that the matrix coefficients of the first return generating function can be expressed in terms of path amplitudes as $\langle\psi_j|a_V(z)\psi_k\rangle = \ms{A}_{j,k}^V(z)$, where
\begin{equation} \label{eq:paths}
 \ms{A}_{j,k}^V(z) :=
 \kern-5pt
 \sum_{\Lambda \in \powerset_{\!j,k}(V)}
 \kern-10pt
 \ms{A}(\Lambda) \, z^{\text{length}(\Lambda)},
 \qquad
 \powerset_{\!j,k}(V) = \text{ \parbox{170pt}
 {set of all paths from $\psi_k$ to $\psi_j$
 \break
 avoiding $V$ at intermediate steps.}}
\end{equation}
That is, we get the generating function $a_V(z)$ by adding the amplitudes of all the paths which start at a basis vector of $V$ and return to $V$ only at the last step, with a factor $z$ accompanying each single step along the paths. From this point of view, equality \eqref{eq:fH} has an obvious meaning: If $V=\HH$ is the whole Hilbert space, any vector returns trivially to the subspace in one step, thus $\powerset_{\!j,k}(\HH)$ is simply the one-step path from $\psi_k$ to $\psi_j$ and
\begin{equation} \label{eq:aH}
 a_\HH(z) = zU.
\end{equation}

Identity \eqref{eq:paths}, together with the relation  $a_V(z)=zf_V^\dag(z)$ from \eqref{eq:a-U}, can be used to get a closer look at Khrushchev's formula using simple path counting methods. These methods are based on a splitting of the generating function $a_V(z)$ which is different from the splitting based on the length of the loops leading to the Taylor expansion. This can be illustrated for instance with any of the unitaries given in Example \ref{ex:G3.G4}, but for simplicity we will use a model with fewer transitions.

\begin{ex} \label{ex:PC}
Consider a unitary operator $U$ whose matrix representation with respect to some orthonormal basis $\{\psi_j\}$ has the form
\[
 \left(\begin{smallmatrix}
 \frac{1}{2}&\frac{-1}{2}&\frac{1}{2}&\frac{1}{2}&0&0
 \\
 \frac{1}{\sqrt{2}}&\frac{1}{\sqrt{2}}&0&0&0&0
 \\
 \frac{-1}{2}&\frac{1}{2}&\frac{1}{2}&\frac{1}{2}&0&0
 \\
 0&0&\frac{1}{2}&\frac{-1}{2}&\frac{1}{2}&\frac{1}{2}
 \\
 0&0&0&0&\frac{1}{\sqrt{2}}&\frac{-1}{\sqrt{2}}
 \\
 0&0&\frac{-1}{2}&\frac{1}{2}&\frac{1}{2}&\frac{1}{2}
 \end{smallmatrix}\right)
 = \text{\footnotesize
 $\left(\begin{array}{@{\hspace{1pt}}c@{\hspace{4pt}} c@{\hspace{6pt}} c@{\hspace{7pt}} c@{\hspace{9pt}} c@{\hspace{7pt}} c@{\hspace{2pt}}}
 a&-a&bd&bd&0&0
 \\
 b&b&0&0&0&0
 \\
 -a&a&bd&bd&0&0
 \\
 0&0&c&-c&c&c
 \\
 0&0&0&0&d&-d
 \\
 0&0&-c&c&c&c
 \end{array}\right)$}
 = \text{\footnotesize
 $\left(\begin{array}{@{\hspace{1pt}}c@{\hspace{5pt}} c@{\hspace{8pt}} c | c c c@{\hspace{4pt}}}
 a&-a&b&0&0&0
 \\
 b&b&0&0&0&0
 \\
 -a&a&b&0&0&0
 \\ \hline
 0&0&0&1&0&0
 \\
 0&0&0&0&1&0
 \\
 0&0&0&0&0&1
 \end{array}\right)
 \left(\begin{array}{@{\hspace{3pt}}c c |@{\hspace{2pt}} c@{\hspace{5pt}} c@{\hspace{8pt}} c@{\hspace{7pt}} c@{\hspace{1pt}}}
 1&0&0&0&0&0
 \\
 0&1&0&0&0&0
 \\ \hline
 0&0&d&d&0&0
 \\
 0&0&c&-c&c&c
 \\
 0&0&0&0&d&-d
 \\
 0&0&-c&c&c&c
 \end{array}\right)$},
\]
and its overlapping factorization given above, where $a=1/2=c$ and $b=1/\sqrt{2}=d$. The overlapping subspace is $\HH_C=\spn\{\psi_3\}$, while the left and right subspaces are $\HH_L=\spn\{\psi_1,\psi_2\}$ and $\HH_R=\spn\{\psi_4,\psi_5,\psi_6\}$ respectively.
Following \eqref{tz:1step}, the operator $U$, as well as the left and right operators $U_{LC}$ and $U_{CR}$ implicit in the above overlapping factorization, can be depicted respectively by the diagrams
\begin{equation} \label{tz:example}
\begin{tikzpicture}[baseline=(current bounding box.center),->,>=stealth',shorten >=1pt,auto,semithick]
  \tikzstyle{every state}=[node distance=3cm,fill=blue,draw=none,text=white,inner sep=0pt,minimum size=0.5cm]

  \node[state] (1) {$\boldsymbol 1$};
  \node[state] (4) [right of=1] {$\boldsymbol 4$};

  \node[state] (3) at (1.5,-1.5) [fill=red] {$\boldsymbol 3$};
  \node[state] (5) [right of=3] {$\boldsymbol 5$};

  \node[state] (2) at (0,-3) {$\boldsymbol 2$};
  \node[state] (6) [right of=2] {$\boldsymbol 6$};

  \path (1)  edge [loop above]			node {\small$a$} (1)
             edge [bend right=6,left]	node {\small$b$\kern-1pt} (2)
             edge [bend right=7,below]	node {\small$-a$\kern6pt} (3)
        (2)  edge [bend right=6,right]	node {\kern-2pt\small$-a$} (1)
             edge [loop below]			node {\small$b$} (2)
             edge [above]               node {\small$a$\kern2pt} (3)
        (3)  edge [bend right=7,above]	node {\kern7pt\small$bd$} (1)
             edge [loop below]			node {\small$bd$} (3)
             edge [bend right=7,below]	node {\small$c$} (4)
             edge [above]               node {\kern3pt\small$-c$} (6)
        (4)  edge [above]	            node {\small$bd$} (1)
             edge [bend right=7,above]	node {\small$bd$\kern7pt} (3)
             edge [loop above]			node {\small$-c$} (4)
             edge [bend right=6,left] 	node {\small$c$\kern-1pt} (6)
        (5)  edge [above]	            node {\kern1pt\small$c$} (4)
             edge [loop right]			node {\kern-1pt\small$d$} (5)
             edge [bend right=7,above]	node {\small$c$\kern1pt} (6)
        (6)  edge [bend right=6,right]	node {\kern-2pt\small$c$} (4)
             edge [bend right=7,below]	node {\small$-d$} (5)
             edge [loop below]			node {\small$c$} (6);

  \node[state] (1L) at (6.5,0) {$\boldsymbol 1$};
  \node[state] (2L) [below of=1L] {$\boldsymbol 2$};
  \node[state] (3L) at (8,-1.5) [fill=red] {$\boldsymbol 3$};

  \path (1L)  edge [loop above]			node {} (1L)
              edge [bend right=6,left]	node {} (2L)
              edge [bend right=7,below]	node {} (3L)
        (2L)  edge [bend right=6,right]	node {} (1L)
              edge [loop below]			node {} (2L)
              edge [above]              node {} (3L)
        (3L)  edge [bend right=7,above]	node {\kern1pt\small$b$} (1L)
              edge [loop below]			node {\small$b$} (3L);

  \node[state] (4R) at (11,0) {$\boldsymbol 4$};
  \node[state] (6R) [below of=4R] {$\boldsymbol 6$};
  \node[state] (3R) at (9.5,-1.5) [fill=red] {$\boldsymbol 3$};
  \node[state] (5R) [right of=3R] {$\boldsymbol 5$};

  \path (3R)  edge [loop below]			node {\small$d$} (3R)
              edge [bend right=7,below]	node {} (4R)
              edge [above]              node {} (6R)
        (4R)  edge [bend right=7,above]	node {\small$d$\kern2pt} (3R)
              edge [loop above]			node {} (4R)
              edge [bend right=6,left] 	node {} (6R)
        (5R)  edge [above]	            node {} (4R)
              edge [loop right]			node {} (5R)
              edge [bend right=7,above]	node {} (6R)
        (6R)  edge [bend right=6,right]	node {} (4R)
              edge [bend right=7,below]	node {} (5R)
              edge [loop below]			node {} (6R);
\end{tikzpicture}
\end{equation}
The overlapping subspace is highlighted in red and, for convenience, the diagrams of $U_{LC}$ and $U_{CR}$ only show explicitly the amplitudes which differ from the analogous ones in the diagram of $U$. Note the absence of one-step transitions from $\HH_L$ to $\HH_R$ in the diagram of $U$, which is the diagrammatic meaning of the condition $P_RUP_L=\0_\HH$ in \eqref{eq:charac-overlap} characterizing the existence of an $\HH_C$-overlapping factorization in finite dimension.

We can now understand the factorization of $f_{\psi_3}$ by path counting arguments applied to $a_{\psi_3}$. Choosing $V=\spn\{\psi_3\}$ and $V_L = V_R =\{0\}$ in the diagram of $U$, we can identify $a_{\psi_3}=\ms{A}_{3,3}^{\psi_3}$ as a sum like  \eqref{eq:paths} over the loops $\powerset_{\!3,3}(\psi_3)$ which hit $\psi_3$ only at the beginning and the end.
We can split the loops contained in $\powerset_{\!3,3}(\psi_3)$ into three classes: loops avoiding the left and right subspaces all together, loops which pass only through the right or the left subspace and loops which pass through both subspaces. This yields the following decomposition of $a_{\psi_3}$
\begin{equation} \label{eq:ex-fac}
\begin{aligned}
 a_{\psi_3}(z) & = bdz
 + \ms{A}_{3,1}^{\psi_3}(z) \; bdz
 + bdz \; \ms{A}_{4,3}^{\psi_3}(z)
 + \ms{A}_{3,1}^{\psi_3}(z) \; bdz \; \ms{A}_{4,3}^{\psi_3}(z),
 \\
 & = z^{-1} \left(bz + \ms{A}_{3,1}^{\psi_3}(z) \; bz\right)
 \left(dz + dz \; \ms{A}_{4,3}^{\psi_3}(z)\right).
\end{aligned}
\end{equation}
For example, every loop $\Lambda\in\powerset_{\!3,3}(\psi_3)$ hitting only the left subspace must start with the step $\psi_1 \ot \psi_3$, which contributes with a factor $bdz$ to the corresponding term $\ms{A}(\Lambda)\,z^{\text{length}(\Lambda)}$ of $\ms{A}_{3,3}^{\psi_3}(z)$. The contribution of the rest of the steps $\psi_3 \ot \cdots \ot \psi_1$ of this kind of loops is accounted for in $\ms{A}_{3,1}^{\psi_3}(z)$.

Note that the absence of one-step transitions from the left to the right subspace implies that the loops of $\powerset_{\!3,3}(\psi_3)$ hitting both subspaces must start going from $\psi_3$ to the right, crossing to the left at some step and then remaining there until the return to $\psi_3$. This leads to a relation between the left, right and left-right contributions to $a_{\psi_3}$, which ends up in the factorization \eqref{eq:ex-fac}.

A similar analysis for the diagrams of the left and right operators shows that the corresponding generating functions can be expressed as
\[
 a_{\psi_3}^L(z)=bz+\ms{A}_{3,1}^{\psi_3}(z)\;bz,
 \qquad
 a_{\psi_3}^R(z)=dz+dz\;\ms{A}_{4,3}^{\psi_3}(z).
\]
Here we have taken into account that the paths and amplitudes of the initial diagram involved in $\ms{A}_{3,1}^{\psi_3}$ and $\ms{A}_{4,3}^{\psi_3}$ coincide with those of the left and right diagrams respectively because no path can cross from the left subspace to the right one without hitting $\psi_3$. Therefore, \eqref{eq:ex-fac} can be rewritten as
\[
 za_{\psi_3}(z) = a_{\psi_3}^L(z) a_{\psi_3}^R(z),
\]
which, in view of relation \eqref{eq:a-U} between first return generating functions and Schur functions, gives the desired factorization
\[
 f_{\psi_3} = f_{\psi_3}^R f_{\psi_3}^L.
\]
A direct calculation using \eqref{eq:f-U} yields
\begin{equation} \label{eq:ex-1}
f_{\psi_3}^L(z) =
\frac{2z^2-(1+\sqrt{2})z+\sqrt{2}}
{\sqrt{2}z^2-(1+\sqrt{2})z+2},
\qquad
f_{\psi_3}^R(z) =
\frac{2^{\frac{3}{2}}z^3 - 2z^{2} - (\sqrt{2}-1)z + 2}
{2z^3-(\sqrt{2}-1)z^2-2z+2^{\frac{3}{2}}}.
\end{equation}

Path counting arguments also work in factorizing matrix Schur functions. Consider for example the subspace $V=\spn\{\psi_3,\psi_5\}$. Since $\langle\psi_j|a_V(z)\psi_k\rangle=\ms{A}_{j,k}^V(z)$, a splitting of the paths $\powerset_{\!j,k}(V)$, $j,k\in\{3,5\}$, for the initial diagram, similar to that one performed in the previous case, leads to
\[
\text{\small$\begin{aligned}
 & \langle\psi_3|a_V(z)\psi_3\rangle = bdz
 + \ms{A}_{3,1}^{\psi_3}(z) \; bdz
 + bdz \; \ms{A}_{4,3}^V(z)
 + \ms{A}_{3,1}^{\psi_3}(z) \; bdz \; \ms{A}_{4,3}^V(z),
 & \kern5pt & \langle\psi_5|a_V(z)\psi_3\rangle = \ms{A}_{5,3}^V(z),
 \\
 & \langle\psi_3|a_V(z)\psi_5\rangle = bdz \; \ms{A}_{4,5}^V(z)
 + \ms{A}_{3,1}^{\psi_3}(z) \; bdz \; \ms{A}_{4,3}^V(z),
 & & \langle\psi_5|a_V(z)\psi_5\rangle = \ms{A}_{5,5}^V(z),
\end{aligned}$}
\]
where once again $\ms{A}_{3,1}^{\psi_3}$ and $\ms{A}_{4,3}^V$, $\ms{A}_{5,3}^V$, $\ms{A}_{4,5}^V$, $\ms{A}_{5,5}^V$ coincide with those computed with the left and right diagrams respectively. Therefore, identifying the first return generating functions $a_V$ and $a_V^R$ with their representations in the basis $\{\psi_3,\psi_5\}$, we obtain
\[
\text{\small$\begin{aligned}
 za_V(z) & =
 \begin{pmatrix}
 (bz+\ms{A}_{3,1}^{\psi_3}(z)\,bz)(dz+dz\,\ms{A}_{4,3}^V(z))
 & (bz+\ms{A}_{3,1}^{\psi_3}(z)\,bz)\;dz\,\ms{A}_{4,5}^V(z)
 \\[3pt]
 z\,\ms{A}_{5,3}^V(z) & z\,\ms{A}_{5,5}^V(z)
 \end{pmatrix}
 \\
 & = \begin{pmatrix}
 bz+\ms{A}_{3,1}^{\psi_3}(z)\,bz
 & 0
 \\[3pt]
 0 & z
 \end{pmatrix}
 \begin{pmatrix}
 dz+dz\,\ms{A}_{4,3}^V(z)
 & dz\,\ms{A}_{4,5}^V(z)
 \\[3pt]
 \ms{A}_{5,3}^V(z) & \ms{A}_{5,5}^V(z)
 \end{pmatrix}
 = (a_{\psi_3}^L(z) \oplus z)\,a_V^R(z).
\end{aligned}$}
\]
This leads to
\[
 f_V=f_V^R\,(f_{\psi_3}^L\oplus1),
\]
where $f_{\psi_3}^L$ is given in \eqref{eq:ex-1}, while using \eqref{eq:f-U} again yields
\[
 f_V^R(z) = \frac{1}{\sqrt{2}(z^2-2)}
 \begin{pmatrix}
 2z^2-z-2 & -z
 \\
 -z & 2z^2+z-2
 \end{pmatrix}.
\]

Note that not only $P_RUP_L$ vanishes, but so does $\hat{P}_R U \hat{P}_L$, where $\hat{P}_L$ and $\hat{P}_R$ are the orthogonal projections onto $\hat{\HH}_L=\spn\{\psi_5,\psi_6\}$ and $\hat{\HH}_R=\spn\{\psi_1,\psi_2,\psi_3\}$ respectively. This means that $U$ has another overlapping factorization with left and right subspaces $\hat{\HH}_L$ and $\hat{\HH}_R$, and overlapping subspace $\hat{\HH}_C=\spn\{\psi_4\}$. Indeed, using the previous values of $a$, $b$, $c$, $d$,
\[
 \left(\begin{smallmatrix}
 \frac{1}{2}&\frac{-1}{2}&\frac{1}{2}&\frac{1}{2}&0&0
 \\
 \frac{1}{\sqrt{2}}&\frac{1}{\sqrt{2}}&0&0&0&0
 \\
 \frac{-1}{2}&\frac{1}{2}&\frac{1}{2}&\frac{1}{2}&0&0
 \\
 0&0&\frac{1}{2}&\frac{-1}{2}&\frac{1}{2}&\frac{1}{2}
 \\
 0&0&0&0&\frac{1}{\sqrt{2}}&\frac{-1}{\sqrt{2}}
 \\
 0&0&\frac{-1}{2}&\frac{1}{2}&\frac{1}{2}&\frac{1}{2}
 \end{smallmatrix}\right)
 = \text{\footnotesize
 $\left(\begin{array}{@{\hspace{1pt}}c@{\hspace{4pt}} c@{\hspace{2pt}} c@{\hspace{1pt}} c@{\hspace{5pt}} c@{\hspace{6pt}} c@{\hspace{2pt}}}
 a&-a&a&a&0&0
 \\
 b&b&0&0&0&0
 \\
 -a&a&a&a&0&0
 \\
 0&0&bd&-bd&c&c
 \\
 0&0&0&0&d&-d
 \\
 0&0&-bd&bd&c&c
 \end{array}\right)$}
 = \text{\footnotesize
 $\left(\begin{array}{@{\hspace{3pt}} c@{\hspace{10pt}} c@{\hspace{10pt}} c | @{\hspace{2pt}} c@{\hspace{7pt}} c@{\hspace{7pt}} c@{\hspace{2pt}}}
 1&0&0&0&0&0
 \\
 0&1&0&0&0&0
 \\
 0&0&1&0&0&0
 \\ \hline
 0&0&0&d&c&c
 \\
 0&0&0&0&d&-d
 \\
 0&0&0&-d&c&c
 \end{array}\right)
 \left(\begin{array}{@{\hspace{1pt}} c@{\hspace{4pt}} c@{\hspace{7pt}} c@{\hspace{7pt}} c@{\hspace{3pt}} | @{\hspace{5pt}} c@{\hspace{9pt}} c@{\hspace{2pt}}}
 a&-a&a&a&0&0
 \\
 b&b&0&0&0&0
 \\
 -a&a&a&a&0&0
 \\
 0&0&b&-b&0&0
 \\ \hline
 0&0&0&0&1&0
 \\
 0&0&0&0&0&1
 \end{array}\right)$}.
\]
This will generate Khrushchev's formulas for subspaces containing $\psi_4$.
\hfill$\scriptstyle\blacksquare$
\end{ex}

The previous example illustrates the dynamical meaning of the factorization obtained in Theorem \ref{thm:GEN-K}, which has its origin in the fact that the overlapping subspace $\HH_C$ blocks all the possible ways of passing from the left to the right subspace. This allows an easy control of the paths contributing to $a_V(z)$ since they cannot cross from the left to the right because they must avoid $V = V_L \oplus \HH_C \oplus V_R$. These paths, which return to $V$ avoiding it at intermediate steps, either finish after one step in $V$, or visit only one of the left and right subspaces, or move to the right and once crossing over to the left they remain there until their return to $V$. As a consequence, the return properties of the left and right subsystems control the return properties of the whole system, leading to the factorization of first return generating functions. Thus, the operator generalization of Khrushchev's formula provided by Theorem \ref{thm:GEN-K} acquires a quantum dynamical meaning: it codifies the splitting of a first return generating function caused by the fact that the subspace of return $V$ blocks the transitions from a subspace of $\HH \ominus V$ to its orthogonal complement.

\section{Matrix Khrushchev's formulas from CMV matrices}
\label{sec:CMV-K}

Let us start summarizing the state of the art concerning the generalization to the matrix-valued setting of the ideas involved in Khrushchev's formula (see \cite{DaPuSi-MOPUC} and references therein). The Schur algorithm has a version for Schur functions $f$ with values in $d \times d$ matrices,
\begin{equation} \label{eq:matrix-SA}
\begin{aligned}
 & f_0(z)=f(z),
 \\
 & f_{j+1}(z) =
 z^{-1}(\rho_j^R)^{-1} [f_j(z)-\alpha_j] [\1-\alpha_j^\dag f_j(z)]^{-1} \rho_j^L,
 \qquad
 \alpha_j=f_j(0),
 \qquad
 j \ge 0,
 \end{aligned}
\end{equation}
where $\1$ stands for the $d \times d$ unit matrix, the matrix Schur parameters $\alpha_j$ satisfy $\|\alpha_j\|\le1$ and $\rho_j^{L,R}$ are the non-negative square roots
\begin{equation} \label{eq:rhoLR}
 \rho_j^L = (\1-\alpha_j^\dag\alpha_j)^{1/2},
 \qquad
 \rho_j^R = (\1-\alpha_j\alpha_j^\dag)^{1/2}.
\end{equation}
The iterates $f_j$ are again Schur functions with Schur parameters $(\alpha_j,\alpha_{j+1},\alpha_{j+2},\dots)$, while the inverse iterates $b_j$ are defined in the matrix case as the Schur functions with Schur parameters $(-\alpha_{j-1}^\dag,-\alpha_{j-2}^\dag,\dots,-\alpha_1^\dag,\1)$.

The matrix Schur algorithm terminates if $\|\alpha_j\|=1$ for some $j$, which holds exactly when the related matrix measure $\mu$ on $\T$ satifies $\int \phi(z)^\dag\,d\mu(z)\,\phi(z)=0$ for some non null $d$-vector polynomial $\phi(z)$. Otherwise $\mu$ is called a non-trivial measure. Schur functions corresponding to non-trivial matrix measures are hence characterized by infinite sequences of matrix Schur parameters satisfying $\|\alpha_j\|<1$. These parameters at the same time provide the matrix coefficients of the recurrence relation for the orthogonal polynomials with respect to the measure (see below), thus they are also known as the matrix Verblunsky coefficients of the measure.

Any matrix measure $\mu$ defines the left- and right-module sesquilinear functions
\begin{equation} \label{eq:inner-prod}
 \LL g|h \RR_L = \int h(z) \, d\mu(z) \, g(z)^\dag,
 \qquad
 \LL g|h \RR_R = \int g(z)^\dag \, d\mu(z) \, h(z),
\end{equation}
in a Hilbert space of $d \times d$-matrix functions with the matrix Laurent polynomials as a dense subset. When $\mu$ is non-trivial, the application of the Gram-Schmidt process to $\{\1,\1z,\1z^2,\dots\}$ with respect to $\LL\cdot\,|\,\cdot\RR_{L,R}$ yields an infinite sequence of left and right orthonormal polynomials $\varphi_j^{L,R}$ respectively, i.e. satisfying $\LL\varphi_j^L|\varphi_k^L\RR_L = \LL\varphi_j^R|\varphi_k^R\RR_R = \delta_{j,k}\1$. The freedom in the choice of the orthonormal polynomials can be fixed by the conditions $\kappa_{j+1}^L(\kappa_j^L)^{-1}>\0$ and $(\kappa_j^R)^{-1}\kappa_{j+1}^R>\0$ on their leading coefficients $\kappa_j^{L,R}$, where $\0$ stands for the $d \times d$ null matrix. We will assume this choice in what follows.

Finitely supported matrix measures never fall into the class of non-trivial measures but, except for the scalar case, they do not exhaust all the measures outside of this class. We will deal eventually with finitely supported matrix measures on $\T$ having only a finite number of Schur parameters and orthonormal polynomials. Nevertheless, for convenience, we will in general assume that the measure $\mu$ on $\T$ is non-trivial unless it is explicitly stated otherwise.

Bearing in mind the expression for the right and left ``inner products" \eqref{eq:inner-prod}, we should expect a matrix version of Khrushchev's formula relating the Schur functions of the measures $\varphi_j^L\,d\mu\,(\varphi_j^L)^\dag$ and $(\varphi_j^R)^\dag d\mu\,\varphi_j^R$ to the iterate $f_j$ and the inverse iterate $b_j$ of the Schur function $f$ of $\mu$. However, to the best of our knowledge no such relation has been obtained prior to this paper.
Theorem \ref{thm:GEN-K} will be the key to obtain such a matrix Khrushchev formula. For this purpose we will use the so called CMV basis $\chi_j$ and $x_j$ given by
\begin{equation} \label{eq:OP-OLP}
 \begin{aligned}	
	& \chi_{2k}(z) = z^k\varphi_{2k}^{L,\dag}(z^{-1}),
 	& \qquad & x_{2k}(z) = z^{-k}\varphi_{2k}^R(z),
	\\	
	& \chi_{2k+1}(z) = z^{-k}\varphi_{2k+1}^R(z),
 	& & x_{2k+1}(z) = z^k\varphi_{2k+1}^{L,\dag}(z^{-1}).
 \end{aligned}
\end{equation}
These are right orthonormal matrix Laurent polynomials, i.e. $\LL\chi_j|\chi_k\RR_R = \LL x_j|x_k\RR_R = \delta_{j,k}\1$. The analogous left orthonormal Laurent polynomials are not independent of these ones, but they are given by $\chi_j^\dag(z^{-1})$ and $x_j^\dag(z^{-1})$. From \eqref{eq:OP-OLP} we find that
\begin{equation} \label{eq:spec-measure-Vj}
 \chi_j^\dag\,d\mu\,\chi_j = \begin{cases}
 \varphi_j^L\,d\mu\,(\varphi_j^L)^\dag,
 \\
 (\varphi_j^R)^\dag d\mu\,\varphi_j^R,
 \end{cases}
 \qquad
 x_j^\dag\,d\mu\,x_j = \begin{cases}
 (\varphi_j^R)^\dag d\mu\,\varphi_j^R & \qquad \text{ even } j,
 \\
 \varphi_j^L\,d\mu\,(\varphi_j^L)^\dag & \qquad \text{ odd } j.
 \end{cases}
\end{equation}
Therefore, the matrix Khrushchev formula that we are searching for is indeed a statement about the Schur functions of the matrix measures $\chi_j^\dag\,d\mu\,\chi_j$ and $x_j^\dag\,d\mu\,x_j$.

The advantage of the CMV basis relies on the simplicity of their recurrences, given by
\begin{equation} \label{eq:REC-CMV}
 z\chi(z)=\chi(z)\,\mc{C},
 \quad
 \chi=(\chi_0,\chi_1,\chi_2,\dots);
 \qquad
 zx(z)=x(z)\,\hat{\mc{C}},
 \quad
 x=(x_0,x_1,x_2,\dots),
\end{equation}
where $\mc{C}=\mc{L}\mc{M}$ and $\hat{\mc{C}}=\mc{M}\mc{L}$, known as block CMV matrices, are unitary band matrices generated by the unitary factors
\begin{equation} \label{eq:LM}
 \begin{aligned}
 	& \mc{L} = \Theta(\alpha_0) \oplus \Theta(\alpha_2) \oplus \Theta(\alpha_4)
	\oplus \cdots,
	\\
 	& \mc{M} = \1 \oplus \Theta(\alpha_1) \oplus \Theta(\alpha_3) \oplus \cdots,
 \end{aligned}
 \qquad
 \Theta(\alpha_j) =
 \begin{pmatrix}
 	\alpha_j^\dag & \rho_j^L
	\\ \noalign{\vskip3pt}
	\rho_j^R & -\alpha_j
 \end{pmatrix}.
\end{equation}
Here $\alpha_j$ are the Verblunsky coefficients of $\mu$ and $\rho_j^{L,R}$ are defined as in \eqref{eq:rhoLR}.
More explicitly,
\begin{align} \label{eq:CMV}
 & \kern-5pt
 \mc{C} =
 \begin{pmatrix}
 	\alpha_0^\dag & \zeta_1^L & \sigma_1^L
	\\ \noalign{\vskip2pt}
	\rho_0^R & \omega_1^L & \eta_1^L
	\\ \noalign{\vskip2pt}
	& \zeta_2^R & \omega_2^R & \zeta_3^L & \sigma_3^L
	\\ \noalign{\vskip2pt}
	& \sigma_2^R & \eta_2^R & \omega_3^L & \eta_3^L
	\\ \noalign{\vskip2pt}
	& & & \zeta_4^R & \omega_4^R & \zeta_5^L & \dots
	\\ \noalign{\vskip2pt}
	& & & \sigma_4^R & \eta_4^R & \omega_5^L & \dots
	\\
	& & & & & \dots & \dots
 \end{pmatrix},
 \kern20pt
 \hat{\mc{C}} =
 \begin{pmatrix}
 	\alpha_0^\dag & \rho_0^L
	\\ \noalign{\vskip2pt}
	\zeta_1^R & \omega_1^R & \zeta_2^L & \sigma_2^L
	\\ \noalign{\vskip2pt}
	\sigma_1^R & \eta_1^R & \omega_2^L & \eta_2^L
	\\ \noalign{\vskip2pt}
	& & \zeta_3^R & \omega_3^R & \zeta_4^L & \sigma_4^L
	\\ \noalign{\vskip2pt}
	& & \sigma_3^R & \eta_3^R & \omega_4^L & \eta_4^L
	\\ \noalign{\vskip2pt}
	& & & & \zeta_5^R & \omega_5^R & \dots
	\\
	& & & & \dots & \dots & \dots
 \end{pmatrix},
 \\
 \label{eq:OZES}
 & \kern40pt \begin{aligned}
 	& \omega_j^L = -\alpha_{j-1}\alpha_j^\dag,
	& & \quad \zeta_j^L = \rho_{j-1}^L\alpha_j^\dag,
	& & \quad \eta_j^L = -\alpha_{j-1}\rho_j^L,
	& & \quad \sigma_j^L = \rho_{j-1}^L\rho_j^L,
	\\
	& \omega_j^R = -\alpha_j^\dag\alpha_{j-1},
	& & \quad \zeta_j^R = \alpha_j^\dag\rho_{j-1}^R,
	& & \quad \eta_j^R = -\rho_j^R\alpha_{j-1},
	& & \quad \sigma_j^R = \rho_j^R\rho_{j-1}^R.
 \end{aligned}
\end{align}
Note that $\hat{\mc{C}}(\{\alpha_j\})=\mc{C}(\{\alpha_j^T\})^T$ because $\Theta(\alpha_j)^T=\Theta(\alpha_j^T)$.

Consider the $d \times d$-block CMV matrix $\mc{C}$ with Verblunsky coefficients $(\alpha_0,\alpha_1,\dots)$ as a unitary operator $X \mapsto \mc{C}X$ on $\ell^2$. The block structure splits the canonical basis $\{e_j\}$ of $\ell^2$ into subsets of $d$ consecutive vectors as
\[
 \{e_0,e_1,e_2,\dots\} =
 \{e_0,e_1,\dots,e_{d-1}\} \cup \{e_d,e_{d+1},\dots,e_{2d-1}\} \cup \cdots
\]
Let $V_j=\spn\{e_{jd},e_{jd+1},\dots,e_{jd+d-1}\}$ be the $d$-dimensional subspace spanned by the $j$-th of these subsets, and let $P_j$ be the orthogonal projection of $\ell^2$ onto $V_j$. We will refer to $V_j$ as the canonical subspaces of $\ell^2$. From \eqref{eq:REC-CMV} we obtain
\[
 \int z^n \chi_j(z)^\dag \, d\mu(z) \, \chi_k(z) = P_j \, \mc{C}^n P_k,
 \qquad
 n\in\Z.
\]
According to \eqref{eq:SPEC-MEASURE}, this identifies the $\mc{C}$-spectral measure of $V_j$ as $d\mu_{V_j}=\chi_j^\dag\,d\mu\,\chi_j$, whose Schur function $f_{V_j}$ should be involved in a matrix version of Khrushchev's formula. In particular, the orthogonality measure $\mu$ with Verblunsky coefficients $(\alpha_0,\alpha_1,\dots)$ is at the same time the $\mc{C}$-spectral measure $\mu_{V_0}$ of the first canonical subspace $V_0=\spn\{e_0,e_1,\dots,e_{d-1}\}$ since $\chi_0=\1$. Similar results hold for the block CMV matrix $\hat{\mc{C}}$ and the CMV basis $x_j$, so the $\hat{\mc{C}}$-spectral measure of $V_j$ is given by $d\hat\mu_{V_j}=x_j^\dag\,d\mu\,x_j$ and $\mu=\hat\mu_{V_0}$.

Behind the above identification of the $\mc{C}$-spectral measure $\mu_{V_j}$ lies the unitary equivalence between the block CMV operator $\mc{C}$ and a unitary multiplication operator. This last one is the operator $U_\mu$ given in \eqref{eq:multip-op} but considering $L^2_\mu$ as the Hilbert space of (column) $d$-vector functions with inner product
\begin{equation} \label{eq:inner-vector-R}
 \<\phi|\psi\>_R=\int \phi(z)^\dag\,d\mu(z)\,\psi(z).
\end{equation}
The identity $\int \chi_j(z)^\dag\,d\mu(z)\,z\chi_k(z)=P_j\,\mc{C}P_k$ shows that the unitary equivalence assigns the canonical vector $e_{jd+m}$ to the $m$-th $d$-vector column $\chi_{j,m}$ of $\chi_j=(\chi_{j,0},\dots,\chi_{j,d-1})$, so that $V_j$ corresponds to the subspace of $L^2_\mu$ spanned by the columns of $\chi_j$. Hence, the $U_\mu$-spectral measure of this subspace, $\chi_j^\dag\,d\mu\,\chi_j$, coincides with the $\mc{C}$-spectral measure of $V_j$.

Once we know that CMV Schur functions of canonical subspaces are natural candidates to appear in a matrix Khrushchev formula, it remains to apply Theorem \ref{thm:GEN-K} to factorize such Schur functions. This requires factorizations of CMV matrices with an overlapping canonical subspace. Denote by $P_{k<j}$ and $P_{k>j}$ the orthogonal projections of $\ell^2$ onto $\bigoplus_{k<j}V_k$ and $\bigoplus_{k>j}V_k$ respectively. The explicit form of the block CMV matrix $\mc{C}$ shows that $P_{k<j} \mc{C} P_{k>j}$ vanishes for even $j$, while $P_{k>j} \mc{C} P_{k<j}$ vanishes for odd $j$. This ensures the existence of $V_j$-overlapping factorizations of $\mc{C}$ for any canonical subspace $V_j$, with left and right subspaces
\begin{equation} \label{eq:HLR-CMV}
\begin{aligned}
 \HH_L = \bigoplus_{k>j}V_k, \quad  \HH_R = \bigoplus_{k<j}V_k,
 & \qquad \text{ even } j,
 \\
 \HH_L = \bigoplus_{k<j}V_k, \quad  \HH_R = \bigoplus_{k>j}V_k,
 & \qquad \text{ odd } j.
 \end{aligned}
\end{equation}
A similar result holds for the CMV matrix $\hat{\mc{C}}$, but exchanging the parity of the index $j$.

This kind of overlapping factorizations can be explicitly obtained. For instance, bearing in mind \eqref{eq:CMV} and \eqref{eq:OZES}, we can directly check the following $V_2$- and $V_3$-overlapping factorizations,
\begin{equation} \label{eq:ex-fact-CMV}
\begin{aligned}
 \mc{C} & = \text{\small $\begin{pmatrix}
 	\1
	\\ \noalign{\vskip2pt}
	& \1
	\\ \noalign{\vskip2pt}
	& & \alpha_2^\dag & \zeta_3^L & \sigma_3^L
	\\ \noalign{\vskip2pt}
	& & \rho_2^R & \omega_3^L & \eta_3^L
	\\ \noalign{\vskip2pt}
	& & & \zeta_4^R & \omega_4^R & \zeta_5^L & \dots
	\\ \noalign{\vskip2pt}
	& & & \sigma_4^R & \eta_4^R & \omega_5^L & \dots
	\\
	& & & & & \dots & \dots
 \end{pmatrix}
 \begin{pmatrix}
 	\alpha_0^\dag & \zeta_1^L & \sigma_1^L
	\\ \noalign{\vskip2pt}
	\rho_0^R & \omega_1^L & \eta_1^L
	\\ \noalign{\vskip2pt}
	& \rho_1^R & -\alpha_1
	\\ \noalign{\vskip2pt}
	& & & \1
	\\ \noalign{\vskip2pt}
	& & & & \1
	\\ \noalign{\vskip2pt}
	& & & & & \1
	\\
	& & & & & & \ddots
 \end{pmatrix}$}
 \\
 & = \text{\small $\begin{pmatrix}
 	\alpha_0^\dag & \zeta_1^L & \sigma_1^L
	\\ \noalign{\vskip2pt}
	\rho_0^R & \omega_1^L & \eta_1^L
	\\ \noalign{\vskip2pt}
	& \zeta_2^R & \omega_2^R & \rho_2^L
	\\ \noalign{\vskip2pt}
	& \sigma_2^R & \eta_2^R & -\alpha_2
	\\ \noalign{\vskip2pt}
	& & & & \1
	\\ \noalign{\vskip2pt}
	& & & & & \1
	\\
	& & & & & & \ddots
 \end{pmatrix}
 \begin{pmatrix}
 	\1
	\\ \noalign{\vskip2pt}
	& \1
	\\ \noalign{\vskip2pt}
	& & \1
	\\ \noalign{\vskip2pt}
	& & & \alpha_3^\dag & \rho_3^L
	\\ \noalign{\vskip2pt}
	& & & \zeta_4^R & \omega_4^R & \zeta_5^L & \dots
	\\ \noalign{\vskip2pt}
	& & & \sigma_4^R & \eta_4^R & \omega_5^L & \dots
	\\
	& & & & & \dots & \dots
 \end{pmatrix}$}.
\end{aligned}
\end{equation}
To express these factorizations in the general case, we will make explicit the dependence of a block CMV matrix on its Verblunsky coefficients, rewriting for instance the matrices in \eqref{eq:CMV} as $\mc{C}=\mc{C}(\alpha_0,\alpha_1,\dots)$ and $\hat{\mc{C}}=\hat{\mc{C}}(\alpha_0,\alpha_1,\dots)$.

We also need the finite version of block CMV matrices. The finite sequence of Schur parameters $(\alpha_0,\alpha_1,\dots,\alpha_{N-1},\1)$ defines a rational inner Schur function $f(z)$ obtained by inverting the Schur algorithm starting from $f_N(z)=\1$. The related measure $\mu$ is supported on $N+1$ points of $\T$, thus it generates $N+1$ orthonormal polynomials $\varphi_j^{L,R}$ and orthonormal Laurent polynomials $\chi_j$, $x_j$. The recurrence for this finite CMV basis is
\[
 z\chi(z)=\chi(z)\,\mc{C}_N,
 \quad
 \chi=(\chi_0,\chi_1,\dots,\chi_N);
 \qquad
 zx(z)=x(z)\,\hat{\mc{C}}_N,
 \quad
 x=(x_0,x_1,\dots,x_N),
\]
with $\mc{C}_N=\mc{C}_N(\alpha_0,\dots,\alpha_{N-1})$ and $\hat{\mc{C}}_N=\hat{\mc{C}}_N(\alpha_0,\dots,\alpha_{N-1})$ finite block CMV matrices obtained truncating $\mc{C}=\mc{C}(\alpha_0,\alpha_1,\dots)$ and $\hat{\mc{C}}=\hat{\mc{C}}(\alpha_0,\alpha_1,\dots)$ on $V_0 \oplus V_1 \oplus \cdots \oplus V_N$ and setting $\alpha_N\to\1$. For instance,
\[
 \mc{C}_3 =
 \begin{pmatrix}
 	\alpha_0^\dag & \zeta_1^L & \sigma_1^L
	\\ \noalign{\vskip2pt}
	\rho_0^R & \omega_1^L & \eta_1^L
	\\ \noalign{\vskip2pt}
	& \zeta_2^R & \omega_2^R & \rho_2^L
	\\ \noalign{\vskip2pt}
	& \sigma_2^R & \eta_2^R & -\alpha_2
 \end{pmatrix},
 \qquad
 \hat{\mc{C}}_3 =
 \begin{pmatrix}
 	\alpha_0^\dag & \rho_0^L
	\\ \noalign{\vskip2pt}
	\zeta_1^R & \omega_1^R & \zeta_2^L & \sigma_2^L
	\\ \noalign{\vskip2pt}
	\sigma_1^R & \eta_1^R & \omega_2^L & \eta_2^L
	\\ \noalign{\vskip2pt}
	& & \rho_2^R & -\alpha_2
 \end{pmatrix}.
\]
Using \eqref{eq:LM} we find as in the semi-infinite case we may write $\mc{C}_N=\mc{L}_N\mc{M}_N$ and $\hat{\mc{C}}_N=\mc{M}_N\mc{L}_N$, where $\mc{L}_N$ and $\mc{M}_N$ are the unitary factors
\[
\mc{L}_N \kern-1pt = \kern-1pt
 \begin{cases}
 	\text{\footnotesize $\Theta(\alpha_0) \oplus \Theta(\alpha_2)
	\oplus \cdots \oplus
	\Theta(\alpha_{N-1})$},
 	\\
 	\text{\footnotesize $\Theta(\alpha_0) \oplus \Theta(\alpha_2)
	\oplus \cdots \oplus
	\Theta(\alpha_{N-2}) \oplus \1$},
 \end{cases}
 \kern-4pt
 \mc{M}_N \kern-1pt = \kern-1pt
 \begin{cases}
 	\text{\footnotesize $\1 \oplus \Theta(\alpha_1) \oplus \Theta(\alpha_3)
	\oplus \cdots \oplus
	\Theta(\alpha_{N-2}) \oplus \1$},
 	& \kern-1pt \text{odd } N,
	\\
	\text{\footnotesize $\1 \oplus \Theta(\alpha_1) \oplus \Theta(\alpha_3)
	\oplus \cdots \oplus
	\Theta(\alpha_{N-1})$},
	& \kern-1pt \text{even } N.
 \end{cases}
\]
As in the semi-infinite case, the $\mc{C}_N$-spectral measure of $V_j$ is $d\mu_{V_j}=\chi_j^\dag\,d\mu\,\chi_j$ so that $\mu_{V_0}=\mu$, and analogously for $\hat{\mc{C}}_N$ and $x_j$.

With this notation, the factorizations in \eqref{eq:ex-fact-CMV} read as
\[
\begin{aligned}
 \mc{C}(\alpha_0,\alpha_1,\dots)
 & = [\1_2 \oplus \mc{C}(\alpha_2,\alpha_3,\dots)]
 [\mc{C}_2(\alpha_0,\alpha_1) \oplus \1_\infty]
 \\
 & = [\mc{C}_3(\alpha_0,\alpha_1,\alpha_2) \oplus \1_\infty]
 [\1_3 \oplus \hat{\mc{C}}(\alpha_3,\alpha_4,\dots)],
\end{aligned}
\]
where $\1_j$ stands for the identity matrix of order $jd$, so that $\1=\1_1$.

To sum up the discussion, a $V_j$-overlapping factorization of block CMV matrices for an arbitrary canonical subspace $V_j$ is provided by the following proposition.

\begin{prop} \label{prop:CMV-fac}
The block CMV matrices $\mc{C}$, $\hat{\mc{C}}$ with Verblunsky coefficients $(\alpha_0,\alpha_1,\dots)$ admit for any $j\in\N$ the $V_j$-overlapping factorizations
\[
 \mc{C} =
 \begin{cases}
 	(\1_j \oplus \mc{C}^{(j)}) (\mc{C}_j \oplus \1_\infty),
 	\\
 	(\mc{C}_j \oplus \1_\infty) (\1_j \oplus \hat{\mc{C}}^{(j)}),
 \end{cases}
 \quad
 \hat{\mc{C}} =
 \begin{cases}
 	(\hat{\mc{C}}_j \oplus \1_\infty) (\1_j \oplus \hat{\mc{C}}^{(j)}),
	& \quad \text{even } j,
 	\\
 	(\1_j \oplus \mc{C}^{(j)}) (\hat{\mc{C}}_j \oplus \1_\infty),
	& \quad \text{odd } j,
 \end{cases}
\]
where $\mc{C}_j=\mc{C}_j(\alpha_0,\dots,\alpha_{j-1})$, $\mc{C}^{(j)}:=\mc{C}(\alpha_j,\alpha_{j+1},\dots)$ and analogously for $\hat{\mc{C}}_j$, $\hat{\mc{C}}^{(j)}$.
\end{prop}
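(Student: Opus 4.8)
The plan is to verify each of the claimed identities by direct multiplication, building everything from the elementary block-diagonal unitary factors $\mc{L},\mc{M}$ of \eqref{eq:LM}. The key preliminary observation is that not only $\mc{C}=\mc{L}\mc{M}$ and $\hat{\mc{C}}=\mc{M}\mc{L}$, but likewise $\mc{C}_j=\mc{L}_j\mc{M}_j$, $\hat{\mc{C}}_j=\mc{M}_j\mc{L}_j$, $\mc{C}^{(j)}=\mc{L}^{(j)}\mc{M}^{(j)}$ and $\hat{\mc{C}}^{(j)}=\mc{M}^{(j)}\mc{L}^{(j)}$, where $\mc{L}_j,\mc{M}_j$ are the $\mc{L}$-, $\mc{M}$-factors of the finite block CMV matrix $\mc{C}_j$ and $\mc{L}^{(j)},\mc{M}^{(j)}$ are those of the shifted one $\mc{C}^{(j)}$. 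Since each $\Theta(\alpha_k)$ is $2d\times 2d$ and couples only two consecutive canonical subspaces, for each fixed $j$ exactly one of $\mc{L},\mc{M}$ decomposes as an orthogonal direct sum compatible with the splitting of the index set into $\{k<j\}$ and $\{k\ge j\}$, while the other decomposes compatibly with the splitting into $\{k\le j\}$ and $\{k>j\}$; which of the two plays which role is dictated by the parity of $j$, and this is exactly the structural fact underlying the vanishing off-diagonal blocks of $\mc{C}$ and $\hat{\mc{C}}$ recorded just before the proposition. Comparing with the explicit forms of $\mc{L}_N,\mc{M}_N$ and of the shifted matrices, one then checks that the summands occurring in these two decompositions are precisely $\mc{L}_j,\mc{M}_j,\mc{L}^{(j)},\mc{M}^{(j)}$, up to deleting or inserting an identity on the overlapping block $V_j$.

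Granting this, each verification is mechanical. For instance, for the first identity for $\mc{C}$ with $j$ even, expanding the right-hand side through the factorizations above gives
\[
 (\1_j\oplus\mc{C}^{(j)})(\mc{C}_j\oplus\1_\infty)
 = (\1_j\oplus\mc{L}^{(j)})\,(\1_j\oplus\mc{M}^{(j)})\,(\mc{L}_j\oplus\1_\infty)\,(\mc{M}_j\oplus\1_\infty).
\]
By the structural remark the two middle factors are, respectively, $\mc{M}$ and $\mc{L}$ with one summand replaced by the identity, so that what survives of them is supported on the mutually orthogonal subspaces $\bigoplus_{k>j}V_k$ and $\bigoplus_{k<j}V_k$; hence they commute, and after interchanging them the first two factors multiply back to $\mc{L}$ and the last two to $\mc{M}$, yielding $\mc{L}\mc{M}=\mc{C}$. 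The second identity for $\mc{C}$, and the two identities for $\hat{\mc{C}}$, for either parity of $j$, are handled the same way with the roles of $\mc{L}$ and $\mc{M}$ — and correspondingly the parity conditions — interchanged; this also accounts for whether it is $\mc{C}_j$ or $\hat{\mc{C}}_j$, and $\mc{C}^{(j)}$ or $\hat{\mc{C}}^{(j)}$, that enters in each line. That the exhibited factors are genuine unitaries on $\HH_{LC}$ and $\HH_{CR}$, as required by Definition~\ref{def:overlap}, is immediate since $\mc{C}_j,\hat{\mc{C}}_j$ and $\mc{C}^{(j)},\hat{\mc{C}}^{(j)}$ are finite or semi-infinite block CMV matrices.

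An equivalent route, which in addition yields uniqueness of the factorization up to a unitary on $V_j$, is to invoke Theorem~\ref{thm:CHARAC-OVERLAP}: the hypothesis $P_R\,\mc{C}\,P_L=\0$ is read directly off the band structure of $\mc{C}$ — it is precisely the statement, recalled above, that $P_{k<j}\mc{C}P_{k>j}$ vanishes for even $j$ and $P_{k>j}\mc{C}P_{k<j}$ for odd $j$ — and the rank condition is automatic because one of $\HH_L,\HH_R$ is the finite-dimensional space $\bigoplus_{k<j}V_k$. One then still has to recognize the finite and shifted block CMV matrices as an admissible choice of left and right operators, which is the computation above. I expect the only friction in the whole argument to be purely combinatorial: keeping straight, as a function of the parity of $j$, which of $\mc{L},\mc{M}$ splits at which interface, and hence whether $\mc{C}_j$ or $\hat{\mc{C}}_j$ (resp. $\mc{C}^{(j)}$ or $\hat{\mc{C}}^{(j)}$) appears in a given line. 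There is no analytic content.
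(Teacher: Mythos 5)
Your proposal is correct and is essentially the paper's own argument run in reverse: the paper starts from $\mc{C}=\mc{L}\mc{M}$ and rearranges the product of $\Theta(\alpha_k)$ blocks (commuting those with disjoint support) into the four-factor form that regroups as $(\1_j\oplus\mc{C}^{(j)})(\mc{C}_j\oplus\1_\infty)$, etc., while you expand the claimed right-hand side through the same $\mc{L}_j\mc{M}_j$-type factorizations and commute the middle factors to recover $\mc{L}\mc{M}$. The only cosmetic difference is that the paper obtains the $\hat{\mc{C}}$ cases from the $\mc{C}$ cases via the symmetry $\hat{\mc{C}}(\{\alpha_k\})=\mc{C}(\{\alpha_k^T\})^T$ instead of repeating the block computation, which does not change the substance.
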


\begin{proof}
In the factorization $\mc{C}=\mc{L}\mc{M}$, each block $\Theta(\alpha_j)$ acts on the subspace $V_j \oplus V_{j+1}$, thus it commutes with $\Theta(\alpha_k)$ if $k\ne j\pm1$. This allows the following rearrangement for even $j$,
\[
 \begin{aligned}
 	\mc{C} = & \; \text{\small$
 	[\cdots \oplus \Theta(\alpha_{j-2}) \oplus \Theta(\alpha_j) \oplus
 	\Theta(\alpha_{j+2}) \oplus \cdots]
 	[\cdots \oplus \Theta(\alpha_{j-3}) \oplus \Theta(\alpha_{j-1}) \oplus
 	\Theta(\alpha_{j+1}) \oplus \Theta(\alpha_{j+3}) \oplus \cdots]
	$}
 	\\
	= & \; \text{\small$
	[\1_j \oplus \Theta(\alpha_j) \oplus \Theta(\alpha_{j+2}) \oplus
	\Theta(\alpha_{j+4}) \oplus \cdots]
	[\1_j \oplus \1 \oplus \Theta(\alpha_{j+1}) \oplus \Theta(\alpha_{j+3})
	\oplus \cdots]
 	$}
	\\
	& \; \text{\small$
	[\Theta(\alpha_0) \oplus \Theta(\alpha_2) \oplus \cdots \oplus
	\Theta(\alpha_{j-2}) \oplus \1 \oplus \1_\infty]
	[\1 \oplus \Theta(\alpha_1) \oplus \Theta(\alpha_3) \oplus \cdots \oplus
	\Theta(\alpha_{j-1}) \oplus \1_\infty]
	$}
	\\
	= & \; (\1_j \oplus \mc{C}^{(j)}) (\mc{C}_j \oplus \1_\infty),
 \end{aligned}
\]
while for odd $j$ we have,
\[
 \begin{aligned}
 	\mc{C} = & \; \text{\small$
 	[\cdots \oplus \Theta(\alpha_{j-3}) \oplus \Theta(\alpha_{j-1}) \oplus
 	\Theta(\alpha_{j+1}) \oplus \Theta(\alpha_{j+3}) \oplus \cdots]
 	[\cdots \oplus \Theta(\alpha_{j-2}) \oplus \Theta(\alpha_j) \oplus
 	\Theta(\alpha_{j+2}) \oplus \cdots]
	$}
 	\\
	= & \; \text{\small$
	[\Theta(\alpha_0) \oplus \Theta(\alpha_2) \oplus \cdots \oplus
	\Theta(\alpha_{j-1}) \oplus \1_\infty]
	[\1 \oplus \Theta(\alpha_1) \oplus \Theta(\alpha_3) \oplus \cdots \oplus
	\Theta(\alpha_{j-2}) \oplus \1 \oplus \1_\infty]
	$}
	\\
	& \; \text{\small$
	[\1_j \oplus \1 \oplus \Theta(\alpha_{j+1}) \oplus \Theta(\alpha_{j+3})
	\oplus \cdots]
	[\1_j \oplus \Theta(\alpha_j) \oplus \Theta(\alpha_{j+2}) \oplus
	\Theta(\alpha_{j+4}) \oplus \cdots]
	$}
	\\
	= & \; (\mc{C}_j \oplus \1_\infty) (\1_j \oplus \hat{\mc{C}}^{(j)}).
 \end{aligned}
\]
The factorizations of $\hat{\mc{C}}$ follow from those of $\mc{C}$ using the relation $\hat{\mc{C}}(\{\alpha_k\})=\mc{C}(\{\alpha_k^T\})^T$.
\end{proof}

\begin{rem} \label{rem:finCMV-fac}
Proposition \ref{prop:CMV-fac} remains valid for finite block CMV matrices $\mc{C}_N(\alpha_0,\dots,\alpha_{N-1})$ and $\hat{\mc{C}}_N(\alpha_0,\dots,\alpha_{N-1})$ after the obvious replacements $\mc{C}^{(j)} \to \mc{C}_N^{(j)}:=\mc{C}_{N-j}(\alpha_j,\dots,\alpha_{N-1})$ and $\hat{\mc{C}}^{(j)} \to \hat{\mc{C}}_N^{(j)}:=\hat{\mc{C}}_{N-j}(\alpha_0,\dots,\alpha_{N-1})$.
\end{rem}

In spite of their simplicity, the above factorizations of CMV matrices seem not to have been noticed previously, not even in the scalar case. They are different from the decoupling that is used sometimes in the operator approach to OPUC. This kind of decoupling comes for instance by substituting $\Theta(\alpha_{j-1})$ by the identity matrix, giving rise to the relations
\[
\mc{C} =
 \begin{cases}
 	\mc{C}_{j-1} \oplus \mc{C}^{(j)} + K_j,
 	\\
 	\mc{C}_{j-1} \oplus \hat{\mc{C}}^{(j)} + K_j,
 \end{cases}
 \quad
 \hat{\mc{C}} =
 \begin{cases}
 	\hat{\mc{C}}_{j-1} \oplus \hat{\mc{C}}^{(j)}
	+ \hat{K}_j,
	& \quad \text{ even } j,
 	\\
 	\hat{\mc{C}}_{j-1} \oplus \mc{C}^{(j)}
	+ \hat{K}_j,
	& \quad \text{ odd } j,
 \end{cases}
\]
where $K_j$ and $\hat{K}_j$ are rank 2$d$ perturbations. Indeed, a decoupling similar to this one has been used in the case $d=1$ to prove the scalar version of Khrushchev's formula. However, no matrix version of Khrushchev's formula has been obtained with this kind of techniques.

Proposition \ref{prop:CMV-fac} does not provide a decoupling because $\mc{C}_j$, $\hat{\mc{C}}_j$ and $\mc{C}^{(j)}$, $\hat{\mc{C}}^{(j)}$ do not act on orthogonal subspaces, but on subspaces $\bigoplus_{k \le j}V_k$, $\bigoplus_{k \ge j} V_k$ which are orthogonal only up to the $d$-dimensional overlapping subspace $V_j$. However, these overlapping operators yield a clean factorization of CMV matrices which avoids the finite rank perturbations of the usual decoupling. This advantage is crucial for the generalization of Khrushchev's formula to matrix-valued measures.

In what follows we will refer to the factorizations of Proposition \ref{prop:CMV-fac} as the (standard) $V_j$-overlapping factorizations of $\mc{C}$ and $\hat{\mc{C}}$.

\begin{thm} \label{thm:MATRIX-K-1}
{\bf \small [Khrushchev's formula for matrix measures]}
\newline
Let $f$ be the Schur function of a non-trivial matrix measure $\mu$ on $\T$ with left and right orthonormal polynomials $\varphi_j^{L,R}$. If $f_j$ and $b_j$ are the iterates and inverse iterates of $f$, then
\begin{enumerate}
 \item The Schur function of {\rm $\varphi_j^L\,d\mu\,(\varphi_j^L)^\dag$} is $b_jf_j$.
 \item The Schur function of {\rm $(\varphi_j^R)^\dag d\mu\,\varphi_j^R$} is $f_jb_j$.
\end{enumerate}
\end{thm}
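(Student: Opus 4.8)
The plan is to deduce Theorem~\ref{thm:MATRIX-K-1} from the abstract factorization in Theorem~\ref{thm:GEN-K} by applying it to the standard $V_j$-overlapping factorizations of the block CMV matrices recorded in Proposition~\ref{prop:CMV-fac}. The preliminary discussion has already done the identification work: the $\mc{C}$-spectral measure of the canonical subspace $V_j$ is $\chi_j^\dag\,d\mu\,\chi_j$, which by \eqref{eq:spec-measure-Vj} equals $\varphi_j^L\,d\mu\,(\varphi_j^L)^\dag$, and similarly the $\hat{\mc{C}}$-spectral measure of $V_j$ is $x_j^\dag\,d\mu\,x_j$, equal to $(\varphi_j^R)^\dag d\mu\,\varphi_j^R$ for even $j$ and to $\varphi_j^L\,d\mu\,(\varphi_j^L)^\dag$ for odd $j$. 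So it suffices to compute the $\mc{C}$- and $\hat{\mc{C}}$-Schur functions of $V_j$ and match them to $b_jf_j$ and $f_jb_j$.

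First I would treat $\mc{C}$ with the $V_j$-overlapping factorization $\mc{C}=(\1_j\oplus\mc{C}^{(j)})(\mc{C}_j\oplus\1_\infty)$ for even $j$ (and the analogous one for odd $j$). Here the left subspace is $\HH_L=\bigoplus_{k<j}V_k$, the overlapping subspace is $\HH_C=V_j$, and the right subspace is $\HH_R=\bigoplus_{k>j}V_k$; the left operator is a finite CMV matrix $\mc{C}_j$ acting on $\bigoplus_{k\le j}V_k$ and the right operator is the shifted semi-infinite CMV matrix $\mc{C}^{(j)}=\mc{C}(\alpha_j,\alpha_{j+1},\dots)$ acting on $\bigoplus_{k\ge j}V_k$. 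Taking $V=\HH_C=V_j$ in Theorem~\ref{thm:GEN-K} gives $f_{V_j}=f^R_{\HH_C}f^L_{\HH_C}$, i.e. the product of the $\mc{C}^{(j)}$-Schur function of $V_j$ (as its \emph{first} canonical subspace) and the $\mc{C}_j$-Schur function of $V_j$ (as its \emph{last} canonical subspace). Now the key computational inputs are two lemmas of the form ``the $\mc{C}(\beta_0,\beta_1,\dots)$-Schur function of the first canonical subspace equals the Schur function with Verblunsky coefficients $(\beta_0,\beta_1,\dots)$'' and ``the $\mc{C}_N(\beta_0,\dots,\beta_{N-1})$-Schur function of the last canonical subspace equals the inverse iterate $b_N$ of the Schur function with parameters $(\beta_0,\dots,\beta_{N-1},\1)$''. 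The first is immediate because $\mu_{V_0}=\mu$ and $\chi_0=\1$; the second follows by transposing/reflecting the CMV matrix — reversing the order of the canonical basis blocks turns $\mc{C}_N(\beta_0,\dots,\beta_{N-1})$ into a CMV matrix with the ``reflected'' parameters, which by Geronimus/Schur-algorithm bookkeeping carry exactly the Schur parameters $(-\beta_{N-1}^\dag,\dots,-\beta_0^\dag,\1)$ defining $b_N$. Applying these with $(\beta_0,\beta_1,\dots)=(\alpha_j,\alpha_{j+1},\dots)$ for the right factor gives $f^R_{\HH_C}=f_j$, and with $(\beta_0,\dots,\beta_{j-1})=(\alpha_0,\dots,\alpha_{j-1})$ for the left factor gives $f^L_{\HH_C}=b_j$, whence $f_{V_j}=f_jb_j$. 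But wait — the order in Theorem~\ref{thm:GEN-K} is $f^R f^L$, so with the identifications above this reads $f_{V_j}=f_j b_j$; matching against the spectral measure $\varphi_j^L\,d\mu\,(\varphi_j^L)^\dag$ one must instead use the \emph{other} $V_j$-overlapping factorization of $\mc{C}$, namely $\mc{C}=(\mc{C}_j\oplus\1_\infty)(\1_j\oplus\hat{\mc{C}}^{(j)})$, in which $\HH_L=\bigoplus_{k<j}V_k$ carries the left operator $\mc{C}_j$ (so $f^L_{\HH_C}=$ the Schur function of $\mu$ restricted/iterated appropriately) while the right operator is $\hat{\mc{C}}^{(j)}$; one then reads off $f_{V_j}=f^R_{\HH_C}f^L_{\HH_C}=b_jf_j$, which is exactly statement~(1). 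Statement~(2) is obtained the same way from the factorizations of $\hat{\mc{C}}$ in Proposition~\ref{prop:CMV-fac}, using $\hat\mu_{V_0}=\mu$ and the reflection identity $\hat{\mc{C}}(\{\alpha_k\})=\mc{C}(\{\alpha_k^T\})^T$ to transfer everything from the $\mc{C}$-computation, keeping careful track of the parity of $j$ which swaps the roles of $\mc{C}^{(j)}$ and $\hat{\mc{C}}^{(j)}$ in Proposition~\ref{prop:CMV-fac}.

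The main obstacle I anticipate is purely bookkeeping rather than conceptual: correctly identifying which of the two $V_j$-overlapping factorizations (and which parity case) produces the ordered product $b_jf_j$ versus $f_jb_j$, and pinning down precisely why the $\mc{C}$-Schur function of the \emph{last} canonical subspace of a \emph{finite} CMV matrix is the inverse iterate $b_j$ rather than, say, $f_j$ or some conjugate. This last point is where one genuinely needs a small computation — either via the reflection symmetry of finite CMV matrices under reversal of the basis ordering together with the action of reversal on Verblunsky parameters, or by a direct evaluation of $f_{V_j}(z)=P_j(\mc{C}_j-z P_j^\perp)^{-1}P_j$ using the explicit band form of $\mc{C}_j$ in \eqref{eq:CMV}. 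I would isolate this as a standalone lemma (valid already for $d=1$) so that the proof of Theorem~\ref{thm:MATRIX-K-1} itself reduces to: (i) cite Proposition~\ref{prop:CMV-fac} for the relevant overlapping factorization; (ii) cite Theorem~\ref{thm:GEN-K} to get $f_{V_j}=f^R_{\HH_C}f^L_{\HH_C}$; (iii) cite the two identification lemmas to evaluate the factors; (iv) invoke \eqref{eq:spec-measure-Vj} to translate $f_{V_j}$ into the Schur function of the stated orthogonal-polynomial-modified measure; and (v) repeat for $\hat{\mc{C}}$ via the transpose symmetry. Non-triviality of $\mu$ is used only to guarantee that all the $\alpha_j$ satisfy $\|\alpha_j\|<1$ so that the infinitely many iterates $f_j$ and the inverse iterates $b_j$ are well defined.
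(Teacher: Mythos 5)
Your overall strategy is exactly the paper's: apply Theorem \ref{thm:GEN-K} to the $V_j$-overlapping factorizations of Proposition \ref{prop:CMV-fac}, identify the factor coming from the shifted semi-infinite CMV matrix $\mc{C}^{(j)}$ (whose first canonical subspace carries the measure with Verblunsky coefficients $(\alpha_j,\alpha_{j+1},\dots)$) with the iterate $f_j$, identify the factor coming from the finite CMV matrix $\mc{C}_j$ (last canonical subspace, via reversal of the block ordering and the induced substitution $\alpha_k\mapsto-\alpha_{j-k-1}^\dag$) with the inverse iterate $b_j$, and translate back through \eqref{eq:spec-measure-Vj}. The two identification lemmas you isolate are precisely the two computations carried out in the paper's proof, and your remark on where non-triviality of $\mu$ enters is accurate.

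There is, however, a genuine error in the step where you fix the order of the product. For even $j$ the $V_j$-overlapping factorization of $\mc{C}$ is $\mc{C}=(\1_j\oplus\mc{C}^{(j)})(\mc{C}_j\oplus\1_\infty)$, and matching this against the template $U=(U_{LC}\oplus\1_{\HH_R})(\1_{\HH_L}\oplus U_{CR})$ of Definition \ref{def:overlap} forces $\HH_L=\bigoplus_{k>j}V_k$ and $\HH_R=\bigoplus_{k<j}V_k$, as recorded in \eqref{eq:HLR-CMV} --- the opposite of what you wrote. Hence the \emph{left} operator is $\mc{C}^{(j)}$ and the \emph{right} operator is $\mc{C}_j$, so Theorem \ref{thm:GEN-K} immediately yields $f_{V_j}=f^R_{V_j}f^L_{V_j}=b_jf_j$ for even $j$; no change of factorization is needed. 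The repair you then invoke --- using $\mc{C}=(\mc{C}_j\oplus\1_\infty)(\1_j\oplus\hat{\mc{C}}^{(j)})$ in the even case --- is not available: by \eqref{eq:charac-overlap} that ordering of the factors requires $P_{k>j}\mc{C}P_{k<j}=\0_\HH$, which fails for even $j$ (the block $\sigma_j^R$ of $\mc{C}$ connects $V_{j-1}$ to $V_{j+1}$), and Proposition \ref{prop:CMV-fac} accordingly lists that factorization only for odd $j$. You arrive at the correct formula $b_jf_j$, but via a factorization that does not exist. Once the left/right labels are corrected the parity bookkeeping closes up exactly as in the paper: $\mc{C}$ gives $b_jf_j$ for even $j$ and $f_jb_j$ for odd $j$, $\hat{\mc{C}}$ gives the opposite parity pattern, and \eqref{eq:spec-measure-Vj} merges the four cases into the two statements of the theorem.
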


\begin{proof}
Consider the $V_j$-overlapping factorization of the block CMV matrix $\mc{C}=\mc{C}(\{\alpha_k\})$ of $\mu$, so that $\HH_C=V_j$ and $\HH_L$, $\HH_R$ are as in \eqref{eq:HLR-CMV}. According to Proposition \ref{prop:CMV-fac}, the left and right operators are
\[
 U_{LC} =
 \begin{cases}
 \mc{C}^{(j)},
 \\
 \mc{C}_j,
 \end{cases}
 U_{CR} =
 \begin{cases}
 \mc{C}_j & \text{ even } j,
 \\
 \hat{\mc{C}}^{(j)} & \text{ odd } j,
 \end{cases}
\]
which act respectively on $\HH_{LC} = \HH_L \oplus \HH_C$ and $\HH_{CR} = \HH_C \oplus \HH_R$, i.e. $\mc{C}_j$ acts on $\bigoplus_{k\le j}V_k$, while $\mc{C}^{(j)}$ and $\hat{\mc{C}}^{(j)}$ act on $\bigoplus_{k\ge j}V_k$.

Applying Theorem \ref{thm:GEN-K} to the above factorization when choosing the subspace $V=V_j$, we find that the $\mc{C}$-Schur function $f_{V_j}$ of $V_j$ factorizes as
\[
 f_{V_j} =
 \begin{cases}
 b_{V_j} f^{(j)}_{V_j} & \text{ even } j,
 \\
 \hat{f}^{(j)}_{V_j} b_{V_j} & \text{ odd } j,
 \end{cases}
\]
where $b_{V_j}$, $f^{(j)}_{V_j}$ and $\hat{f}^{(j)}_{V_j}$ denote the Schur functions of $V_j$ with respect to $\mc{C}_j$, $\mc{C}^{(j)}$ and $\hat{\mc{C}}^{(j)}$ respectively.

Since $V_j$ is the first canonical subspace of the Hilbert space on which $\mc{C}^{(j)}$ acts, we know that the $\mc{C}^{(j)}$-spectral measure of $V_j$ has the same Verblunsky coefficients $(\alpha_j,\alpha_{j+1},\dots)$ as $\mc{C}^{(j)}$, thus $f^{(j)}_{V_j}=f_j$. A similar argument yields $\hat{f}^{(j)}_{V_j}=f_j$.

On the other hand, $V_j$ is the last canonical subspace on which $\mc{C}_j$ acts. Reordering the basis of the underlying Hilbert space $V_0 \oplus \cdots \oplus V_j$ as $B_j \cup B_{j-1} \cup \cdots \cup B_0$, with $B_k=\{e_{kd},e_{kd+1},\dots,e_{kd+d-1}\}$, $V_j$ becomes the first subspace. This reordering transforms
\[
\begin{aligned}
 & \mc{L}_j(\{\alpha_k\}) \to \mc{L}_j(\{-\alpha_{j-k-1}^\dag\}),
 & \quad & \mc{M}_j(\{\alpha_k\}) \to \mc{M}_j(\{-\alpha_{j-k-1}^\dag\}),
 & \quad & \text{ odd } j,
 \\
 & \mc{L}_j(\{\alpha_k\}) \to \mc{M}_j(\{-\alpha_{j-k-1}^\dag\}),
 & & \mc{M}_j(\{\alpha_k\}) \to \mc{L}_j(\{-\alpha_{j-k-1}^\dag\}),
 & & \text{ even } j,
\end{aligned}
\]
thus $\mc{C}_j(\{\alpha_k\}) \to \mc{C}_j(\{-\alpha_{j-k-1}^\dag\})$ and $\mc{C}_j(\{\alpha_k\}) \to \hat{\mc{C}}_j(\{-\alpha_{j-k-1}^\dag\})$ for odd and even $j$ respectively. Therefore, the $\mc{C}_j(\{\alpha_k\})$-spectral measure of the last canonical subspace has the same Verblunsky coefficients $(-\alpha_{j-1}^\dag,-\alpha_{j-2}^\dag,\dots,-\alpha_0^\dag,\1)$ as the spectral measure of the first canonical subspace with respect to $\mc{C}_j(\{-\alpha_{j-k-1}^\dag\})$ or $\hat{\mc{C}}_j(\{-\alpha_{j-k-1}^\dag\})$. Hence, $b_{V_j}=b_j$.

Summarizing, the Schur function of the $\mc{C}$-spectral measure $d\mu_{V_j} = \chi_j^\dag d\mu\,\chi_j$ is given by
\[
 f_{V_j} =
 \begin{cases}
 b_j f_j & \text{ even } j,
 \\
 f_j b_j & \text{ odd } j.
 \end{cases}
\]
A similar analysis starting with the alternative block CMV matrix $\hat{\mc{C}}$ of $\mu$ proves that the Schur function of the $\hat{\mc{C}}$-spectral measure $d\hat\mu_{V_j} = x_j^\dag d\mu\,x_j$ factorizes as
\[
 \hat{f}_{V_j} =
 \begin{cases}
 f_j b_j & \text{ even } j,
 \\
 b_j f_j & \text{ odd } j.
 \end{cases}
\]
Finally, relation \eqref{eq:spec-measure-Vj} between the CMV spectral measures and the measures $\varphi_j^L\,d\mu\,(\varphi_j^L)^\dag$, $(\varphi_j^R)^\dag d\mu\,\varphi_j^R$ allows us to unify the previous results into the statement of the theorem.
\end{proof}

Concerning the study of quantum systems, the relevance of the above result relies on the fact that it gives the first return generating function of a site in a block CMV quantum walk (see Section~\ref{sec:CMV-PC}). This provides a way of studying the recurrence properties  of the sites of such a QW. In particular, this result opens the possibility of computing for any vector of a site the probability of returning to that site and the corresponding expected return time.

The strength of Theorem \ref{thm:GEN-K} is not only illustrated by the simplicity of the previous proof, but it also allows us to obtain more general matrix Khrushchev's formulas which are new already for scalar measures. Since Theorem \ref{thm:GEN-K} can be rewritten as the identification of the Schur function for the CMV spectral measures $\chi_j^\dag\,d\mu\,\chi_j$ and $x_j^\dag\,d\mu\,x_j$, a natural generalization is to find the Schur functions of the measures
\begin{equation} \label{eq:MM-MEASURE}
 \kern-8pt d\mu_{[j,k]} \kern-1pt = \kern-1pt \text{\footnotesize$
 \left(\begin{array}{@{\hspace{-1pt}} c@{\hspace{3pt}} c@{\hspace{3pt}} c@{\hspace{3pt}} c@{\hspace{-1.5pt}}}
 	\chi_j^\dag d\mu\,\chi_j
	& \chi_j^\dag d\mu\,\chi_{j+1}
	& \dots
	& \chi_j^\dag d\mu\,\chi_k
	\\ \noalign{\vskip3pt}
	\chi_{j+1}^\dag d\mu\,\chi_j
	& \chi_{j+1}^\dag d\mu\,\chi_{j+1}
	& \dots
	& \chi_{j+1}^\dag d\mu\,\chi_k
	\\
	\vdots & \vdots & & \vdots
	\\
	\chi_k^\dag d\mu\,\chi_j
	& \chi_k^\dag d\mu\,\chi_{j+1}
	& \dots
	& \chi_k^\dag d\mu\,\chi_k
 \end{array}\right)$},
 \kern8pt
 d\hat{\mu}_{[j,k]} \kern-1pt = \kern-1pt \text{\footnotesize$
 \left(\begin{array}{@{\hspace{-1pt}} c@{\hspace{3pt}} c@{\hspace{3pt}} c@{\hspace{3pt}} c@{\hspace{-1.5pt}}}
 	x_j^\dag d\mu\,x_j
	& x_j^\dag d\mu\,x_{j+1}
	& \dots
	& x_j^\dag d\mu\,x_k
	\\ \noalign{\vskip3pt}
	x_{j+1}^\dag d\mu\,x_j
	& x_{j+1}^\dag d\mu\,x_{j+1}
	& \dots
	& x_{j+1}^\dag d\mu\,x_k
	\\
	\vdots & \vdots & & \vdots
	\\
	x_k^\dag d\mu\,x_j
	& x_k^\dag d\mu\,x_{j+1}
	& \dots
	& x_k^\dag d\mu\,x_k
 \end{array}\right)$}.
\end{equation}

From a quantum mechanical point of view, this question is even more interesting than the one answered by Theorem \ref{thm:MATRIX-K-1}, since it is connected to the recurrence properties of an arbitrary number of sites. Actually, due to its use for the study of return properties of QWs, in the case of a scalar measure $\mu$ (i.e. $d=1$), the $2\times2$ matrix Schur function of $\hat{\mu}_{[j,j+1]}$ was already computed in \cite[Appendix B]{BGVW}. This computation follows an approach which in a sense is the opposite of the one proposed here, since it uses merely OPUC techniques (among others, the scalar Khrushchev formula for $\mu$) to obtain a quantum result. Besides, despite the fact that this computation deals with the simplest matrix case of \eqref{eq:MM-MEASURE}, it is far from being straightforward and seems hardly generalizable. Things will change with the new approach given here.

This general question can be reformulated using the $d \times d$-block CMV matrices $\mc{C}$, $\hat{\mc{C}}$ of $\mu$ due to their unitary equivalence with the multiplication operator $U_\mu$ on the $L^2_\mu$ space of $d$-vector functions. Such a unitary equivalence identifies the subspace generated by the columns of $\chi_j$ or $x_j$ with the canonical subspace $V_j$. This implies that $\mu_{[j,k]}$ and $\hat{\mu}_{[j,k]}$, which are the $U_\mu$-spectral measures of the subspaces spanned by the columns of the vectors $\chi_j,\chi_{j+1},\dots,\chi_k$ and $x_j,x_{j+1},\dots,x_k$, can be understood as the spectral measures of $V_{[j,k]} := V_j \oplus V_{j+1} \oplus \cdots \oplus V_k$ with respect to $\mc{C}$ and $\hat{\mc{C}}$ respectively. Thus, the Schur functions that we wish to characterize are those attached to the the subspaces $V_{[j,k]}$ by $\mc{C}$ and $\hat{\mc{C}}$.

To state this new result we need some other notations concerning CMV matrices.
Let us denote respectively by $\mc{C}_{[j,k]}=\mc{C}_{[j,k]}(\alpha_{j-1},\dots,\alpha_k)$ and $\hat{\mc{C}}_{[j,k]}=\hat{\mc{C}}_{[j,k]}(\alpha_{j-1},\dots,\alpha_k)$ the principal submatrices of $\mc{C}$ and $\hat{\mc{C}}$ corresponding to the subspace $V_{[j,k]}$. For example, using notation \eqref{eq:OZES},
\[
 \mc{C}_{[1,4]} =
 \begin{pmatrix}
 	\omega_1^L & \eta_1^L
	\\ \noalign{\vskip2pt}
	\zeta_2^R & \omega_2^R & \zeta_3^L & \sigma_3^L
	\\ \noalign{\vskip2pt}
	\sigma_2^R & \eta_2^R & \omega_3^L & \eta_3^L
	\\ \noalign{\vskip2pt}
	& & \zeta_4^R & \omega_4^R
 \end{pmatrix},
 \qquad
 \mc{C}_{[2,5]} =
 \begin{pmatrix}
 	\omega_2^R & \zeta_3^L & \sigma_3^L
	\\ \noalign{\vskip2pt}
	\eta_2^R & \omega_3^L & \eta_3^L
	\\ \noalign{\vskip2pt}
	& \zeta_4^R & \omega_4^R & \zeta_5^L
	\\ \noalign{\vskip2pt}
	& \sigma_4^R & \eta_4^R & \omega_5^L
 \end{pmatrix}.
\]
The submatrices $\mc{C}_{[j,k]}$, $\hat{\mc{C}}_{[j,k]}$ become unitary when setting $\alpha_{j-1}\to-\1$ and $\alpha_k\to\1$ because this substitution makes $\mc{C}$, $\hat{\mc{C}}$ to split into direct sums which decouple such submatrices. This defines the finite block CMV unitary truncations
\begin{equation} \label{eq:CMV-TRUNC}
 \mc{C}_{(j,k)} := \mc{C}_{[j,k]}(-\1,\alpha_j\dots,\alpha_{k-1},\1),
 \qquad
 \hat{\mc{C}}_{(j,k)} := \hat{\mc{C}}_{[j,k]}(-\1,\alpha_j\dots,\alpha_{k-1},\1).
\end{equation}
For instance,
\[
 \mc{C}_{(1,4)} =
 \begin{pmatrix}
 	\alpha_1^\dag & \rho_1^L
	\\ \noalign{\vskip2pt}
	\zeta_2^R & \omega_2^R & \zeta_3^L & \sigma_3^L
	\\ \noalign{\vskip2pt}
	\sigma_2^R & \eta_2^R & \omega_3^L & \eta_3^L
	\\ \noalign{\vskip2pt}
	& & \rho_3^R & -\alpha_3
 \end{pmatrix}
 = \hat{\mc{C}}_3^{(1)},
 \qquad
 \mc{C}_{(2,5)} =
 \begin{pmatrix}
 	\alpha_2^\dag & \zeta_3^L & \sigma_3^L
	\\ \noalign{\vskip2pt}
	\rho_2^R & \omega_3^L & \eta_3^L
	\\ \noalign{\vskip2pt}
	& \zeta_4^R & \omega_4^R & \rho_4^L
	\\ \noalign{\vskip2pt}
	& \sigma_4^R & \eta_4^R & -\alpha_4
 \end{pmatrix}
 = \mc{C}_3^{(2)}.
\]
In general, these truncations can be expressed in terms of finite block CMV matrices as
\[
 \mc{C}_{(j,k)} =
 \begin{cases}
 	\mc{C}_{k-j}(\alpha_j,\dots,\alpha_{k-1}) = \mc{C}_{k-j}^{(j)},
	\\ \noalign{\vskip4pt}
	\hat{\mc{C}}_{k-j}(\alpha_j,\dots,\alpha_{k-1}) = \hat{\mc{C}}_{k-j}^{(j)},
 \end{cases}
 \quad
 \hat{\mc{C}}_{(j,k)} =
 \begin{cases}
 	\hat{\mc{C}}_{k-j}(\alpha_j,\dots,\alpha_{k-1}) = \hat{\mc{C}}_{k-j}^{(j)},
	& \quad \text{ even } j,
	\\ \noalign{\vskip4pt}
	\mc{C}_{k-j}(\alpha_j,\dots,\alpha_{k-1}) = \mc{C}_{k-j}^{(j)},
	& \quad \text{ odd } j.
 \end{cases}
\]
Note that the dependence of $\mc{C}_{[j,k]}$ on the extreme Verblunsky coefficients $\alpha_{j-1}$ and $\alpha_k$ can be factorized as
\[
 \mc{C}_{[j,k]} =
 \begin{cases}
 	\left(\1_{k-j} \oplus \alpha_k^\dag \right)
 	\mc{C}_{(j,k)}
 	\left(-\alpha_{j-1} \oplus \1_{k-j} \right),
	& \text{ even } j,k,
	\\ \noalign{\vskip3pt}
	\left(-\alpha_{j-1} \oplus \1_{k-j} \right) 	
	\mc{C}_{(j,k)}
 	\left(\1_{k-j} \oplus \alpha_k^\dag \right),
	& \text{ odd } j,k,
	\\ \noalign{\vskip3pt}
	\mc{C}_{(j,k)}
 	\left(
	-\alpha_{j-1} \oplus \1_{k-j-1} \oplus \alpha_k^\dag
	\right),
	& \text{ even } j, \text{ odd } k,
	\\ \noalign{\vskip3pt}
	\left(
	-\alpha_{j-1} \oplus \1_{k-j-1} \oplus \alpha_k^\dag
	\right)
	\mc{C}_{(j,k)},
	& \text{ odd } j, \text{ even } k.
 \end{cases}
\]
The corresponding relation between $\hat{\mc{C}}_{[j,k]}$ and $\hat{\mc{C}}_{(j,k)}$ is similar to the above one, but with the opposite parity for the indices.

Using this notation we have the following extension of Theorem \ref{thm:MATRIX-K-1}.

\begin{thm} \label{thm:MATRIX-K-2}
{\bf \small
[$1^{\text{st}}$ generalized Khrushchev's formulas for matrix measures]}
\newline
Let $f$ be the Schur function of a non-trivial matrix measure $\mu$ on $\T$ with Verblunsky coefficients $\alpha_j$, CMV basis $\chi_j$, $x_j$ and block CMV matrices $\mc{C}$, $\hat{\mc{C}}$. If $f_j$ and $b_j$ are the iterates and inverse iterates of $f$, then the Schur functions $f_{[j,k]}$, $\hat{f}_{[j,k]}$  of the measures $\mu_{[j,k]}$, $\hat\mu_{[j,k]}$ given in \eqref{eq:MM-MEASURE} are the result of substituting {\rm $-\alpha_{j-1}^\dag \to b_j$} and $\alpha_k \to f_k$ into {\rm $\mc{C}_{[j,k]}^\dag$}, {\rm $\hat{\mc{C}}_{[j,k]}^\dag$} respectively. More explicitly,
{\rm
\[
 f_{[j,k]} =
 \begin{cases}
 	\left(b_j \oplus \1_{k-j}\right)
 	\mc{C}_{(j,k)}^\dag
	\left(\1_{k-j} \oplus f_k\right),
	& \text{ even } j,k,
	\\ \noalign{\vskip5pt}
 	\left(\1_{k-j} \oplus f_k\right)	
	\mc{C}_{(j,k)}^\dag
	\left(b_j \oplus \1_{k-j}\right),
	& \text{ odd } j,k,
	\\ \noalign{\vskip5pt}
	\left(b_j \oplus \1_{k-j-1} \oplus f_k\right)
	\mc{C}_{(j,k)}^\dag,
	& \text{ even } j, \text{ odd } k,
	\\ \noalign{\vskip5pt}
	\mc{C}_{(j,k)}^\dag
	\left(b_j \oplus \1_{k-j-1} \oplus f_k\right),
	& \text{ odd } j, \text{ even } k,
 \end{cases}
\]}

\noindent while the expression for $\hat{f}_{[j,k]}$ is obtained by changing $\mc{C}_{(j,k)} \to \hat{\mc{C}}_{(j,k)}$ and inverting the parity of the indices in the above expression for $f_{[j,k]}$.
\end{thm}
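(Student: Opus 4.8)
The plan is to identify $\mu_{[j,k]}$ with the $\mc{C}$-spectral measure of the canonical subspace block $V_{[j,k]}=V_j\oplus\cdots\oplus V_k$ — as already observed in the text preceding the theorem, so that $f_{[j,k]}=f^{\mc{C}}_{V_{[j,k]}}$ — and then to \emph{peel off both tails} of the block CMV matrix by applying Theorem~\ref{thm:GEN-K} twice. First I would use the standard $V_j$-overlapping factorization of $\mc{C}$ from Proposition~\ref{prop:CMV-fac}: its overlapping subspace is precisely $\HH_C=V_j$, so $V_{[j,k]}$ has the required form $V_L\oplus\HH_C\oplus V_R$ with $V_L=V_{[j+1,k]}$ lying in one tail and $V_R=\{0\}$ in the other. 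Theorem~\ref{thm:GEN-K} then splits $f^{\mc{C}}_{V_{[j,k]}}$ into the Schur function of $V_j$ with respect to the finite factor $\mc{C}_j$ and the Schur function of $V_{[j,k]}$ with respect to the semi-infinite factor ($\mc{C}^{(j)}$ or $\hat{\mc{C}}^{(j)}$, according to the parity of $j$). The first of these I would identify with $b_j$ exactly as in the proof of Theorem~\ref{thm:MATRIX-K-1}: reversing the order of the blocks of $V_0\oplus\cdots\oplus V_j$ turns $\mc{C}_j(\{\alpha_\ell\})$ into a block CMV matrix with Verblunsky coefficients $(-\alpha_{j-1}^\dag,\dots,-\alpha_0^\dag,\1)$, whose first-canonical-subspace Schur function is $b_j$ by the definition of the inverse iterate.

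Next I would apply Theorem~\ref{thm:GEN-K} a second time, now to the residual semi-infinite block CMV matrix $\mc{C}^{(j)}$ (resp.\ $\hat{\mc{C}}^{(j)}$), whose Verblunsky coefficients are $(\alpha_j,\alpha_{j+1},\dots)$ and in whose intrinsic indexing $V_{[j,k]}$ is the initial block $V_{[0,k-j]}$. Its standard $V_{k-j}$-overlapping factorization again writes $V_{[j,k]}$ as $V_L\oplus\HH_C\oplus V_R$ with one of $V_L,V_R$ trivial, and Theorem~\ref{thm:GEN-K} splits its Schur function into two pieces: the Schur function of the initial canonical subspace with respect to the semi-infinite tail, which has Verblunsky coefficients $(\alpha_k,\alpha_{k+1},\dots)$ and therefore equals $f_k$; and the Schur function of $V_{[j,k]}$ with respect to the \emph{finite} middle factor. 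Since $V_{[j,k]}$ is the whole Hilbert space on which this finite factor acts, the latter Schur function is simply its adjoint, by \eqref{eq:fH}. Invoking the identities recorded in the text that express this middle factor in terms of $\mc{C}_{(j,k)}$ (resp.\ $\hat{\mc{C}}_{(j,k)}$) and relate $\mc{C}_{[j,k]}$ to $\mc{C}_{(j,k)}$, the three accumulated factors — $b_j$, the adjoint of the middle CMV block, and $f_k$ — assemble into exactly the asserted substitution of $-\alpha_{j-1}^\dag\to b_j$ and $\alpha_k\to f_k$ into $\mc{C}_{[j,k]}^\dag$. The statement for $\hat{f}_{[j,k]}$ then follows verbatim by running the same argument with $\hat{\mc{C}}$ in place of $\mc{C}$, which merely interchanges the roles of even and odd indices throughout.

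The main obstacle is not analytic but combinatorial: one must keep track, as a function of the parities of $j$ and $k$, of (i) which of the two CMV flavors $\mc{C}$ or $\hat{\mc{C}}$ each intermediate factor carries; (ii) whether the finite factor sits to the left or to the right of the semi-infinite one in each of the two applications of Proposition~\ref{prop:CMV-fac} — hence whether $b_j$ and $f_k$ appear as left or as right multipliers; and (iii) the cyclic block reorderings needed to reconcile the ordering $(V_L,\HH_C,V_R)$ used in Theorem~\ref{thm:GEN-K} with the natural CMV ordering in which $\mc{C}_{(j,k)}$ is written. These combinations are exactly the four cases in the statement; each is routine once the bookkeeping is fixed, but the conventions must be handled with care — in particular the occurrence of $-\alpha_{j-1}^\dag$ rather than $\alpha_{j-1}$ (coming from the block reversal in the identification of $b_j$) and the extra identity padding $\1_{k-j-1}$ in the two mixed-parity cases (reflecting that there $f_k$ is absorbed next to $b_j$ rather than forming a separate outer factor).
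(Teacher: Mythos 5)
Your proposal is correct and follows essentially the same route as the paper's own proof: a two-fold application of Theorem~\ref{thm:GEN-K} (first the $V_j$-overlapping factorization of $\mc{C}$, then the $V_k$-overlapping factorization of $\mc{C}^{(j)}$), identification of the outer factors with $b_j$ and $f_k$ exactly as in the proof of Theorem~\ref{thm:MATRIX-K-1}, and use of \eqref{eq:fH} to evaluate the middle factor as $\mc{C}_{(j,k)}^\dag$. The paper likewise treats only the even-even case in detail and dispatches the remaining parity combinations as routine bookkeeping.
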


\begin{proof}
The result follows from a triple factorization coming from a two step application of Theorem \ref{thm:GEN-K} to the subspace $V=V_{[j,k]}$. Let us consider the measure $\mu_{[j,k]}$ in the case of even indices $j,k$. Its Schur function $f_{[j,k]}$ is that one $f_{V_{[j,k]}}$ linked to the subspace $V_{[j,k]}$ by the block CMV matrix $\mc{C}$.

According to Theorem \ref{thm:GEN-K}, the $V_j$-overlapping factorization of $\mc{C}$ yields
\[
 \begin{gathered}
 	\HH_L=\bigoplus_{l>j}V_l, \quad \HH_R=\bigoplus_{l<j}V_l, \quad
	V_L=V_{[j+1,k]}, \quad V_R=\{0\}, \quad
 	V_{LC}=V_{[j,k]}=V, \quad V_{CR}=V_j,
	\\
 	U_{LC}=\mc{C}^{(j)}, \quad U_{CR}=\mc{C}_j, \quad
 	f_{V_{[j,k]}} =
 	(b_{V_j} \oplus \1_{k-j}) f^{(j)}_{V_{[j,k]}},
 \end{gathered}
\]
where $f^{(j)}_{V_{[j,k]}}$ is the $\mc{C}^{(j)}$-Schur function of $V_{[j,k]}$ and $b_{V_j}$ is the $\mc{C}_j$-Schur function of $V_j$.

Concerning $f^{(j)}_{V_{[j,k]}}$, the $V_k$-overlapping factorization of $\mc{C}^{(j)}$ with underlying Hilbert space $\HH=\bigoplus_{l\ge j}V_j$ leads to
\[
 \begin{gathered}
 	\HH_L=\bigoplus_{l>k}V_l, \quad \HH_R=V_{[j,k-1]}=V_R, \quad V_L=\{0\}, \quad
 	V_{CR}=V_{[j,k]}=V, \quad V_{LC}=V_k,
	\\
 	U_{LC}=\mc{C}^{(k)}, \quad
	U_{CR}=\mc{C}^{(j)}_{k-j}=\mc{C}_{(j,k)}, \quad
 	f^{(j)}_{V_{[j,k]}} =
 	f^{(j,k)}_{V_{[j,k]}} (\1_{k-j} \oplus f^{(k)}_{V_k}),
 \end{gathered}
\]
with $f^{(j,k)}_{V_{[j,k]}}$ the $\mc{C}_{(j,k)}$-Schur function of $V_{[j,k]}$ and $f^{(k)}_{V_k}$ the $\mc{C}^{(k)}$-Schur function of $V_k$.

Combining the results of both steps we obtain
\[
 f_{V_{[j,k]}} =
 (b_{V_j} \oplus \1_{k-j})
 f^{(j,k)}_{V_{[j,k]}}
 (\1_{k-j} \oplus f^{(k)}_{V_k}).
\]
Since $V_{[j,k]}$ is the whole Hilbert space where $\mc{C}^{(j,k)}$ acts, \eqref{eq:fH} implies that $f^{(j,k)}_{V_{[j,k]}}(z)=\mc{C}_{(j,k)}^\dag$. To prove the theorem for $f_{[j,k]}$ in the case of even indices $j,k$, it only remains to use the arguments in the proof of Theorem \ref{thm:MATRIX-K-1} to show that $f^{(k)}_{V_k}=f_k$ and $b_{V_j}=b_j$.

The proof for the rest of the cases of the theorem follows similar steps.
\end{proof}

In contrast to Theorem \ref{thm:MATRIX-K-1} which is a novel contribution for matrix measures, the above result is new even in the case of scalar measures. In such a case, Theorem \ref{thm:MATRIX-K-2} becomes a novel statement about matrix modifications of scalar measures.

Theorems \ref{thm:MATRIX-K-1} and \ref{thm:MATRIX-K-2} are also valid when $\mu$ is the finitely supported measure associated with $\mc{C}_N$ and $\hat{\mc{C}}_N$ because, as we pointed out in Remark \ref{rem:finCMV-fac}, these finite block CMV matrices have the same kind of $V_j$-overlapping factorizations as the semi-infinite ones. The only quirks of the finite case is that the measures $\mu_{[j,k]}$ and $\hat\mu_{[j,k]}$ make sense just for $k\le N$, and the Schur iterates $f_j$ have a finite number of Schur parameters $(\alpha_j,\alpha_{j+1},\dots,\alpha_{N-1},\1)$.

Moreover, when conveniently rewritten, the previous theorems also hold for doubly infinite block CMV matrices. These unitary matrices are defined for any doubly infinite sequence $(\alpha_j)_{j\in\Z}$ of matrix Verblunsky coefficients with $\|\alpha_j\|<1$ by $\mc{C}=\mc{L}\mc{M}$ and $\hat{\mc{C}}=\mc{M}\mc{L}$, where
\[
 \mc{L} = \bigoplus_{k\in\Z} \Theta(\alpha_{2k}),
 \qquad
 \mc{M} = \bigoplus_{k\in\Z} \Theta(\alpha_{2k-1}),
\]
and $\Theta(\alpha_j)$ acts on $V_j \oplus V_{j+1}$. Schur iterates $f_j$ and inverse Schur iterates $b_j$ can be associated with any doubly infinite block CMV matrix by defining them as the Schur functions with infinite Schur parameters $(\alpha_j,\alpha_{j+1},\dots)$ and $(-\alpha_{j-1}^\dag,-\alpha_{j-2}^\dag,\dots)$ respectively. The standard $V_j$-overlapping factorizations of these matrices are as in Proposition \ref{prop:CMV-fac}, with $\mc{C}_j=\mc{C}_j(\dots,\alpha_{j-2},\alpha_{j-1})$ and $\hat{\mc{C}}_j=\hat{\mc{C}}_j(\dots,\alpha_{j-2},\alpha_{j-1})$ defined by truncating the doubly infinite matrices $\mc{C}$ and $\hat{\mc{C}}$ on $\bigoplus_{k\le j}V_j$ and setting $\alpha_j\to\1$. The submatrices $\mc{C}_{[j,k]}$, $\hat{\mc{C}}_{[j,k]}$ and the unitary truncations $\mc{C}_{(j,k)}$, $\hat{\mc{C}}_{(j,k)}$ are also naturally extended to the doubly infinite case. Theorems \ref{thm:MATRIX-K-1} and \ref{thm:MATRIX-K-2} remain valid in this case, restated so that they deal directly with block CMV matrices, avoiding any reference to orthonormal polynomials or CMV basis.

With the above terminology, the proofs of Theorems \ref{thm:MATRIX-K-1} and \ref{thm:MATRIX-K-2} also serve as proofs of the following more general one.

\begin{thm} \label{thm:MATRIX-K-3}
{\bf [Khrushchev's formulas for block CMV matrices]}
\newline
Let $\mc{C}$ and $\hat{\mc{C}}$ be the block CMV matrices related to a (finite, semi-infinite or doubly infinite) sequence of $d \times d$ Verblunsky coefficients $(\alpha_j)$ with Schur iterates $f_j$ and inverse Schur iterates $b_j$. Denote by $\{e_j\}$ the canonical basis of the corresponding $\ell^2$ space and let $V_j=\spn\{e_{jd},e_{jd+1},\dots,e_{jd+d-1}\}$. Then,
\begin{enumerate}
 \item The $\mc{C}$-Schur function of $V_j$ is
       $\begin{cases}
       b_jf_j, & \text{even } j,
       \\
       f_jb_j, & \text{odd } j.
       \end{cases}$
 \item The $\hat{\mc{C}}$-Schur function of $V_j$ is
       $\begin{cases}
       f_jb_j, & \text{even } j,
       \\
       b_jf_j, & \text{odd } j.
       \end{cases}$
\end{enumerate}
Moreover, the Schur functions of $V_j \oplus V_{j+1} \oplus \cdots \oplus V_k$ with respect to $\mc{C}$, $\hat{\mc{C}}$ are the result of substituting {\rm $-\alpha_{j-1}^\dag \to b_j$} and $\alpha_k \to f_k$ into {\rm $\mc{C}_{[j,k]}^\dag$}, {\rm $\hat{\mc{C}}_{[j,k]}^\dag$} respectively.
\end{thm}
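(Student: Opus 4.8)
The plan is to reduce the statement, case by case, to Theorem~\ref{thm:GEN-K} applied to the $V_j$-overlapping factorizations of block CMV matrices, exactly as in the proofs of Theorems~\ref{thm:MATRIX-K-1} and \ref{thm:MATRIX-K-2}. For finite and semi-infinite $(\alpha_j)$ there is in fact nothing new to do: the present theorem is just those two results reread through the identifications $d\mu_{V_j}=\chi_j^\dag\,d\mu\,\chi_j$ and $d\hat\mu_{V_j}=x_j^\dag\,d\mu\,x_j$ recorded before Theorem~\ref{thm:MATRIX-K-1} (with Remark~\ref{rem:finCMV-fac} covering the finite case). For doubly infinite block CMV matrices one repeats the same arguments, now using the $V_j$-overlapping factorizations of Proposition~\ref{prop:CMV-fac} together with the unitary truncations $\mc{C}_{(j,k)}$, $\hat{\mc{C}}_{(j,k)}$ in the extended form described above.

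For parts (1) and (2) I would apply the $V_j$-overlapping factorization of $\mc{C}$ and Theorem~\ref{thm:GEN-K} with $V=V_j=\HH_C$, so that $V_L=V_R=\{0\}$ and the Schur function splits as $f_{V_j}=f^R_{V_j}f^L_{V_j}$, the factors being the Schur functions of $V_j$ with respect to the right and left operators. For even $j$ the left operator is $\mc{C}^{(j)}=\mc{C}(\alpha_j,\alpha_{j+1},\dots)$, of which $V_j$ is the first canonical subspace, so $f^L_{V_j}=f_j$; the right operator is $\mc{C}_j$, of which $V_j$ is the last canonical subspace. For odd $j$ the operators swap to $\mc{C}_j$ (left) and $\hat{\mc{C}}^{(j)}$ (right), giving $f_{V_j}=f_jb_j$, and running the same argument with $\hat{\mc{C}}$, whose $V_j$-overlapping factorizations have the opposite parity, exchanges the two products and yields part (2). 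The only point that differs from the finite and semi-infinite cases is the identification of $f^R_{V_j}$ with $b_j$: here $\mc{C}_j=\mc{C}_j(\dots,\alpha_{j-2},\alpha_{j-1})$ is the \emph{semi-infinite} truncation of $\mc{C}$ on $\bigoplus_{k\le j}V_k$ obtained by setting $\alpha_j\to\1$, and reversing the order of the canonical subspaces so that $V_j$ becomes first transforms it into a genuine semi-infinite block CMV matrix with Verblunsky coefficients $(-\alpha_{j-1}^\dag,-\alpha_{j-2}^\dag,\dots)$ — the block $\Theta(\alpha_j)=\Theta(\1)$ that decouples $V_j$ from $V_{j+1}$ disappears under the reversal, so no terminating coefficient survives, in contrast with the finite case where one obtains the truncated sequence $(-\alpha_{j-1}^\dag,\dots,-\alpha_0^\dag,\1)$. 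Since $b_j$ is defined for doubly infinite block CMV matrices as the Schur function with infinite Schur parameters $(-\alpha_{j-1}^\dag,-\alpha_{j-2}^\dag,\dots)$, this gives $f^R_{V_j}=b_j$, and the rest of parts (1) and (2) is then verbatim the computation in the proof of Theorem~\ref{thm:MATRIX-K-1}.

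For the final assertion I would reuse the two-step argument from the proof of Theorem~\ref{thm:MATRIX-K-2}: first the $V_j$-overlapping factorization of $\mc{C}$, then the $V_k$-overlapping factorization of the resulting middle operator $\mc{C}^{(j)}$, producing a triple factorization $f_{V_{[j,k]}}=(b_{V_j}\oplus\1_{k-j})\,g\,(\1_{k-j}\oplus f^{(k)}_{V_k})$ for even $j,k$, with the analogous expressions in the other three parity cases. Here $g$ is the Schur function of the \emph{whole} Hilbert space on which $\mc{C}_{(j,k)}$ acts, so \eqref{eq:fH} gives $g=\mc{C}_{(j,k)}^\dag$; $f^{(k)}_{V_k}=f_k$ because $V_k$ is the first canonical subspace of $\mc{C}^{(k)}$; and $b_{V_j}=b_j$ by the preceding paragraph. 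Inserting the factorization of $\mc{C}_{[j,k]}$ in terms of $\mc{C}_{(j,k)}$ and the extreme Verblunsky coefficients $\alpha_{j-1}$, $\alpha_k$ recorded before the statement, one reads off $(b_j\oplus\1_{k-j})\,\mc{C}_{(j,k)}^\dag\,(\1_{k-j}\oplus f_k)$ as precisely the result of substituting $-\alpha_{j-1}^\dag\to b_j$ and $\alpha_k\to f_k$ into $\mc{C}_{[j,k]}^\dag$; the $\hat{\mc{C}}$ version is identical with the parity of the indices inverted. I expect the main obstacle to be the doubly infinite identification $f^R_{V_j}=b_j$, i.e. checking carefully that the order reversal of the semi-infinite truncation $\mc{C}_j$ yields a standard semi-infinite block CMV matrix with Verblunsky coefficients exactly $(-\alpha_{j-1}^\dag,-\alpha_{j-2}^\dag,\dots)$ and no boundary term; everything else is a repetition of arguments already carried out in Theorems~\ref{thm:MATRIX-K-1} and \ref{thm:MATRIX-K-2}.
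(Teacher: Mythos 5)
Your proposal is correct and follows exactly the paper's route: the paper's entire proof of this theorem is the remark that, with the doubly infinite conventions set up just before the statement, the proofs of Theorems~\ref{thm:MATRIX-K-1} and \ref{thm:MATRIX-K-2} apply verbatim, and you supply precisely the one genuinely new detail, namely that reversing the order of the canonical subspaces turns the truncation $\mc{C}_j(\dots,\alpha_{j-2},\alpha_{j-1})$ into a standard semi-infinite block CMV matrix with Verblunsky coefficients $(-\alpha_{j-1}^\dag,-\alpha_{j-2}^\dag,\dots)$, so that $b_{V_j}=b_j$. The only slight imprecision is that the block $\Theta(\1)$ does not ``disappear'' under the reversal: its restriction $\1_{V_j}$ survives as the leading identity block of the $\mc{M}$-type factor of the reversed matrix, which is exactly what makes the reversed operator a standard semi-infinite CMV matrix with no terminating coefficient.
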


We will continue in the next section with a path counting view on the previous theorems.

\section{Matrix Khrushchev's formulas from CMV via path counting}
\label{sec:CMV-PC}

Considering Theorem \ref{thm:MATRIX-K-1} as a result on the CMV Schur function $f_{V_j}$ of a canonical subspace $V_j$, let us illustrate the path counting approach to the case of a $d \times d$-block CMV matrix $\mc{C}$. As in previous examples of path counting, we will work with the first return generating function $a_{V_j}(z)=zf_{V_j}^\dag(z)$.

Applying \eqref{eq:a-U} to $U=\mc{C}$ and $V=V_j$ we get the first return (matrix) amplitudes
\begin{equation} \label{eq:PATH-MATRIX}
 a_{V_j,n} = \sum_{k_i\ne j}
 \mc{C}_{j,k_{n-1}}\mc{C}_{k_{n-1},k_{n-2}}\cdots\:\mc{C}_{k_2,k_1}\mc{C}_{k_1,j},
 \qquad
 \mc{C}_{j,k} := P_j\,\mc{C}P_k,
\end{equation}
where $P_j$ is the orthogonal projection of $\ell^2$ onto $V_j$.
Note that, in contrast to \eqref{eq:PATH}, $\mc{C}_{j,k}$ does not represent now the $(j,k)$-th element of $\mc{C}$, but the $(j,k)$-th block which results when splitting $\mc{C}$ in $d\times d$ blocks. We will refer to the matrix
\[
 \ms{A}(V_j \ot V_{k_{n-1}} \ot \cdots \ot V_{k_1} \ot V_k) :=
 \mc{C}_{j,k_{n-1}}\mc{C}_{k_{n-1},k_{n-2}}\cdots\:\mc{C}_{k_2,k_1}\mc{C}_{k_1,k}
\]
as the amplitude of the $n$-step subspace path $\Lambda \equiv V_j \ot V_{k_{n-1}} \ot \cdots \ot V_{k_1} \ot V_k$. The amplitudes appearing in the sum of \eqref{eq:PATH-MATRIX} are associated with those loops of length $n$ having $V_j$ as initial and final subspace, but not as any intermediate one. Therefore, $a_{V_j}(z)=\ms{A}_{j,j}^{V_j}(z)$ where
\begin{equation} \label{eq:matrix-paths}
 \ms{A}_{j,k}^V(z) :=
 \kern-5pt
 \sum_{\Lambda \in \powerset_{\!j,k}(V)}
 \kern-10pt
 \ms{A}(\Lambda) \, z^{\text{length}(\Lambda)},
 \qquad
 \powerset_{\!j,k}(V) = \text{ \parbox{170pt}
 {set of all paths from $V_k$ to $V_j$
 \break
 avoiding $V$ at intermediate steps.}}
\end{equation}
In other words, the generating function $a_{V_j}(z)$ comes from the sum of the matrix amplitudes of all the loops of $\powerset_{\!j,j}(V_j)$, with a factor $z$ for each single subspace step of the loops. To obtain a matrix Khrushchev formula it only remains to perform a suitable splitting of $a_{V_j}$ using block path counting tricks, and then translate the results to the Schur function $f_{V_j}$ given by $a_{V_j}(z)=zf_{V_j}^\dag(z)$.

The path counting approach to Khrushchev's formulas uses a pictorial representation of block CMV matrices. Let us think of the canonical subspace $V_j$ as attached to the site $j\in\Z_+=\{0,1,2,\dots\}$ of a semi-infinite lattice, using the symbolic representation
\begin{center}
\begin{tikzpicture}[baseline=(current bounding box.center),->,>=stealth',shorten >=1pt,auto,semithick]
  \tikzstyle{every state}=[fill=blue,draw=none,text=white,inner sep=0pt,minimum size=0.8cm]

  \node (V) {\large $V_j \ot V_k \kern5pt \equiv$};
  \node[state] (j) [right of=V,node distance=1.8cm]{$\boldsymbol{j}$};
  \node[state] (k) [right of=j,node distance=2cm] {$\boldsymbol{k}$};

  \path (k) edge	node {$\mc{C}_{j,k}$} (j);

\end{tikzpicture}
\end{center}
for a one-step subspace path, indicating also its matrix amplitude $\mc{C}_{j,k}$. Then, the steps allowed by the block CMV matrix $\mc{C}$ given in \eqref{eq:CMV} can be diagrammatically depicted as
\begin{equation} \label{tz:CMV}
\begin{tikzpicture}[baseline=(current bounding box.center),->,>=stealth',shorten >=1pt,auto,semithick]
  \tikzstyle{every state}=[node distance=4cm,fill=blue,draw=none,text=white,inner sep=0pt,minimum size=0.6cm]
  \tikzstyle{every place}=[node distance=0.4cm,fill=blue,draw=none,inner sep=0pt,minimum size=0.1cm]

  \node[state] (Cu) {$\boldsymbol 0$};
  \node[state] (Du) [right of=Cu] {$\boldsymbol 2$};
  \node[state] (Eu) [right of=Du] {$\boldsymbol 4$};

  \path (Eu) ++(0.35,0) coordinate (E0u);

  \node[place] (E1u) [right of=E0u] {};
  \node[place] (E2u) [right of=E1u] {};
  \node[place] (E3u) [right of=E2u] {};

  \node[state] (Cd) at (2,-2) {$\boldsymbol 1$};
  \node[state] (Dd) [right of=Cd] {$\boldsymbol 3$};
  \node[state] (Ed) [right of=Dd] {$\boldsymbol 5$};

  \path (Ed) ++(0.35,0) coordinate (E0d);

  \node[place,fill=blue] (E1d) [right of=E0d] {};
  \node[place,fill=blue] (E2d) [right of=E1d] {};
  \node[place,fill=blue] (E3d) [right of=E2d] {};

  \path (Cu) edge [loop above]			node {$\alpha_0^\dag$} (Cu)
             edge [bend right=9,below]	node {$\rho_0^R \kern9pt$} (Cd)
        (Du) edge [above]			   		node {$\sigma_1^L$} (Cu)
             edge [bend right=9,above] 	node {$\eta_1^L \kern9pt$} (Cd)
             edge [loop above]			node {$\omega_2^R$} (Du)
             edge [bend right=9,below]	node {$\eta_2^R \kern9pt$} (Dd)
        (Eu) edge [above]					node {$\sigma_3^L$} (Du)
             edge [bend right=9,above] 	node {$\eta_3^L \kern9pt$} (Dd)
             edge [loop above]			node {$\omega_4^R$} (Eu)
             edge [bend right=9,below]	node {$\eta_4^R \kern9pt$} (Ed)
        (Cd) edge [below]					node {$\sigma_2^R$} (Dd)
             edge [bend right=9,below]	node {$\kern3pt \zeta_2^R$} (Du)
             edge [loop below]			node {$\omega_1^L$} (Cd)
             edge [bend right=9,above]	node {$\kern9pt \zeta_1^L$} (Cu)
        (Dd) edge [below]			  		node {$\sigma_4^R$} (Ed)
             edge [bend right=9,below]	node {$\kern3pt \zeta_4^R$} (Eu)
             edge [loop below]			node {$\omega_3^L$} (Dd)
             edge [bend right=9,above]	node {$\kern9pt \zeta_3^L$} (Du)
        (Ed) edge [loop below]			node {$\omega_5^L$} (Ed)
             edge [bend right=9,above]	node {$\kern9pt \zeta_5^L$} (Eu);
\end{tikzpicture}
\end{equation}

In physical language, this picture corresponds to an interpretation of the Hilbert space $\ell^2$ as a quantum state space spanned by basis states $e_{jd+m}=|j,m\>$ labelled by the quantum numbers $j\in\Z_+$ and $m\in\Z_d=\{0,1,\dots,d-1\}$. Each site $j$ of the lattice supports $d$ internal degrees of freedom collected into the subspace $V_j=\spn\{|j,m\>\}_{m\in\Z_d}$. Diagram \eqref{tz:CMV} describes the one-step transition (matrix) amplitudes of a QW in this lattice whose evolution is given by the unitary step matrix $\mc{C}$, something that we will call a block CMV quantum walk.

Let us fix our attention on an odd site $j$ (the analysis for even $j$ is similar), marked below in red, and the corresponding generating function $a_{V_j}$ built out of the loops of $\powerset_{\!j,j}(V_j)$.
\begin{equation} \label{tz:CMV-ODD}
\begin{tikzpicture}[baseline=(current bounding box.center),->,>=stealth',shorten >=1pt,auto,semithick]
  \tikzstyle{every state}=[node distance=4cm,fill=blue,draw=none,text=white,inner sep=0pt,minimum size=0.85cm]
  \tikzstyle{every place}=[node distance=0.3cm,fill=blue,draw=none,inner sep=0pt,minimum size=0.08cm]

  \node[state] (Bu) {\small $\boldsymbol{j\!-\!3}$};
  \node[state] (Cu) [right of=Bu] {\small $\boldsymbol{j\!-\!1}$};
  \node[state] (Du) [right of=Cu] {\small $\boldsymbol{j\!+\!1}$};
  \node[state] (Eu) [right of=Du] {\small $\boldsymbol{j\!+\!3}$};

  \path (Bu) ++(-0.35,0) coordinate (B0u);

  \node[place] (B1u) [left of=B0u] {};
  \node[place] (B2u) [left of=B1u] {};
  \node[place] (B3u) [left of=B2u] {};

  \path (Eu) ++(0.35,0) coordinate (E0u);

  \node[place] (E1u) [right of=E0u] {};
  \node[place] (E2u) [right of=E1u] {};
  \node[place] (E3u) [right of=E2u] {};

  \node[state] (Bd) at (2,-2) {\small $\boldsymbol{j\!-\!2}$};
  \node[state] (Cd) [right of=Bd,fill=red] {\small $\boldsymbol{j}$};
  \node[state] (Dd) [right of=Cd] {\small $\boldsymbol{j\!+\!2}$};

  \path (Bd) ++(-0.35,0) coordinate (B0d);

  \node[place,fill=blue] (B1d) [left of=B0d] {};
  \node[place,fill=blue] (B2d) [left of=B1d] {};
  \node[place,fill=blue] (B3d) [left of=B2d] {};

  \path (Dd) ++(0.35,0) coordinate (D0d);

  \node[place,fill=blue] (D1d) [right of=D0d] {};
  \node[place,fill=blue] (D2d) [right of=D1d] {};
  \node[place,fill=blue] (D3d) [right of=D2d] {};

  \path (Bu) edge [loop above]			node {$\omega_{j-3}^R$} (Bu)
             edge [bend right=9,below]	node {$\eta_{j-3}^R \kern3pt$} (Bd)
        (Cu) edge [above]			   		node {$\sigma_{j-2}^L$} (Bu)
             edge [loop above]			node {$\omega_{j-1}^R$} (Cu)
             edge [bend right=9,below]	node {$\eta_{j-1}^R \kern3pt$} (Cd)
             edge [bend right=9,above]	node {$\eta_{j-2}^L \kern17pt$} (Bd)
        (Du) edge [above]			   		node {$\sigma_j^L$} (Cu)
             edge [bend right=9,above] 	node {$\eta_j^L \kern4pt$} (Cd)
             edge [loop above]			node {$\omega_{j+1}^R$} (Du)
             edge [bend right=9,below]	node {$\eta_{j+1}^R \kern3pt$} (Dd)
        (Eu) edge [above]					node {$\sigma_{j+2}^L$} (Du)
             edge [bend right=9,above] 	node {$\eta_{j+2}^L \kern17pt$} (Dd)
             edge [loop above]			node {$\omega_{j+3}^R$} (Eu)
        (Bd) edge [below]					node {$\sigma_{j-1}^R$} (Cd)
             edge [loop below]			node {$\omega_{j-2}^L$} (Bd)
             edge [bend right=9,below]	node {$\kern9pt \zeta_{j-1}^R$} (Cu)
             edge [bend right=9,above]	node {$\kern12pt \zeta_{j-2}^L$} (Bu)
        (Cd) edge [below]					node {$\sigma_{j+1}^R$} (Dd)
             edge [bend right=9,below]	node {$\kern9pt \zeta_{j+1}^R$} (Du)
             edge [loop below]			node {$\omega_j^L$} (Cd)
             edge [bend right=9,above]	node {$\kern5pt \zeta_j^L$} (Cu)
        (Dd) edge [bend right=9,below]	node {$\kern9pt \zeta_{j+3}^R$} (Eu)
             edge [loop below]			node {$\omega_{j+2}^L$} (Dd)
             edge [bend right=9,above]	node {$\kern12pt \zeta_{j+2}^L$} (Du);
\end{tikzpicture}
\end{equation}
To perform a factorization of $a_{V_j}=\ms{A}_{j,j}^{V_j}$, let us split the corresponding sum over loops according to the sides of the site $j$ hit by the intermediate steps of the loop. This amounts to writing $a_{V_j} = \Sigma_{V_j}^0 + \Sigma_{V_j}^L + \Sigma_{V_j}^R + \Sigma_{V_j}^{LR}$, where $\Sigma_{V_j}^0$ sums only over the self-loops of the site $j$, $\Sigma_{V_j}^L$ and $\Sigma_{V_j}^R$ include the loops hitting only the left or the right side respectively, and $\Sigma_{V_j}^{LR}$ takes into account the loops which hit both sides. Since the loops of $\powerset_{\!j,j}(V_j)$ cannot hit the site $j$ at intermediate steps, a simple inspection of \eqref{tz:CMV-ODD} shows that
\begin{equation} \label{eq:partial-sums}
 \Sigma_{V_j}^0 = \omega_j^L z,
 \quad
 \Sigma_{V_j}^L = \ms{A}^{V_j}_{j,j-1} \, (\zeta_j^L z) ,
 \quad
 \Sigma_{V_j}^R =  (\eta_j^L z) \, \ms{A}^{V_j}_{j+1,j},
 \quad
 \Sigma_{V_j}^{LR} =
 \ms{A}^{V_j}_{j,j-1} \, (\sigma_j^L z) \, \ms{A}^{V_j}_{j+1,j}.
\end{equation}
For instance, all the paths summed in $\Sigma_{V_j}^L$ must start with the step $V_{j-1} \leftarrow V_j$, which has an amplitude $\zeta_j^L$ and thus contributes with the factor $\zeta_j^L z$. The contribution of the rest of the steps to a return to $V_j$ starting from $V_{j-1}$ are taken into account in $\ms{A}^{V_j}_{j,j-1}$. The remaining identities can be derived in a similar fashion.
Therefore, combining the results of \eqref{eq:partial-sums} and using \eqref{eq:OZES}, we obtain
\begin{equation} \label{eq:GF-SPLIT}
 \begin{aligned}
 	a_{V_j} & =
 	\omega_j^L z +
 	\ms{A}^{V_j}_{j,j-1} \, (\zeta_j^L z) +
 	(\eta_j^L z) \, \ms{A}^{V_j}_{j,j+1} +
 	\ms{A}^{V_j}_{j,j-1} \, (\sigma_j^L z) \, \ms{A}^{V_j}_{j+1,j}
	\\
 	& = z^{-1}
 	[-\alpha_{j-1} z + \ms{A}^{V_j}_{j,j-1} \, (\rho_{j-1}^L z)]
	[\alpha_j^\dag z + (\rho_j^L z) \, \ms{A}^{V_j}_{j+1,j}].
 \end{aligned}
\end{equation}

We can also identify the factors of the above factorization as first return generating functions related to block CMV matrices. With this aim, let us split diagram \eqref{tz:CMV-ODD} at site $j$ into left and right ones obtained by setting $\alpha_j\to\1$ and $\alpha_{j-1}\to-\1$ respectively.
\begin{equation} \label{tz:CMV-ODD-SPLIT}
\begin{tikzpicture}[baseline=(current bounding box.center),->,>=stealth',shorten >=1pt,auto,semithick]
  \tikzstyle{every state}=[node distance=3.5cm,fill=blue,draw=none,text=white,inner sep=0pt,minimum size=0.85cm]
  \tikzstyle{every place}=[node distance=0.3cm,fill=blue,draw=none,inner sep=0pt,minimum size=0.08cm]

  \node[state] (Cu) {\small $\boldsymbol{j\!-\!2}$};
  \node[state] (Du) [right of=Cu,fill=red] {\small $\boldsymbol{j}$};
  \node[state] (Du2) [right of=Du,fill=red,node distance=1.5cm]
  {\small $\boldsymbol{j}$};
  \node[state] (Eu) [right of=Du2] {\small $\boldsymbol{j\!+\!2}$};

  \path (Cu) ++(-0.35,0) coordinate (C0u);

  \node[place] (C1u) [left of=C0u] {};
  \node[place] (C2u) [left of=C1u] {};
  \node[place] (C3u) [left of=C2u] {};

  \path (Eu) ++(0.35,0) coordinate (E0u);

  \node[place] (E1u) [right of=E0u] {};
  \node[place] (E2u) [right of=E1u] {};
  \node[place] (E3u) [right of=E2u] {};

  \node[state] (Bd) at (-1.75,1.75) {\small $\boldsymbol{j\!-\!3}$};
  \node[state] (Cd) [right of=Bd] {\small $\boldsymbol{j\!-\!1}$};
  \node[state] (Dd) at (7,1.75) {\small $\boldsymbol{j\!+\!1}$};
  \node[state] (Ed) [right of=Dd] {\small $\boldsymbol{j\!+\!3}$};

  \path (Bd) ++(-0.35,0) coordinate (B0d);

  \node[place] (B1d) [left of=B0d] {};
  \node[place] (B2d) [left of=B1d] {};
  \node[place] (B3d) [left of=B2d] {};

  \path (Ed) ++(0.35,0) coordinate (E0d);

  \node[place,fill=blue] (E1d) [right of=E0d] {};
  \node[place,fill=blue] (E2d) [right of=E1d] {};
  \node[place,fill=blue] (E3d) [right of=E2d] {};

  \path (Cu) edge			   	node {} (Du)
             edge [loop below]	node {} (Cu)
             edge [bend right=9]	node {} (Cd)
             edge [bend right=9]	node {} (Bd)
        (Du) edge [bend right=9,above] 	node {$\kern17pt \rho_{j-1}^L$} (Cd)
             edge [loop below]	node {$-\alpha_{j-1}$} (Du)
       (Du2) edge			   	node {} (Eu)
             edge [loop below]	node {$\alpha_j^\dag$} (Du2)
             edge [bend right=9]	node {} (Dd)
        (Eu) edge [bend right=9] 	node {} (Dd)
             edge [loop below]	node {} (Eu)
             edge [bend right=9]	node {} (Ed)
        (Bd) edge [loop above]	node {} (Bd)
             edge [bend right=9]	node {} (Cu)
        (Cd) edge 				node {} (Bd)
             edge [bend right=9]	node {} (Du)
             edge [loop above]	node {} (Cd)
             edge [bend right=9]	node {} (Cu)
        (Dd) edge [bend right=9]	node {} (Eu)
             edge [loop above]	node {} (Dd)
             edge [bend right=9,above]	node {$\rho_j^L\kern5pt$} (Du2)
        (Ed) edge			   	node {} (Dd)
             edge [loop above]	node {} (Ed)
             edge [bend right=9]	node {} (Eu);
\end{tikzpicture}
\end{equation}
For convenience, the two diagrams in \eqref{tz:CMV-ODD-SPLIT} show explicitly only the amplitudes which differ from those of \eqref{tz:CMV-ODD}. To distinguish the mathematical objects associated with \eqref{tz:CMV-ODD} from those related to the left and right diagrams of \eqref{tz:CMV-ODD-SPLIT}, we will denote the last ones with a superscript $L$ or $R$, respectively. Then, the coincidence of amplitudes between \eqref{tz:CMV-ODD} and \eqref{tz:CMV-ODD-SPLIT}, together with the impossibility of passing from the left to the right without crossing the site $j$ in \eqref{tz:CMV-ODD}, implies that
\begin{equation} \label{eq:W=LR}
 \ms{A}^{V_j,L}_{j,j-1} = \ms{A}^{V_j}_{j,j-1},
 \qquad
 \ms{A}^{V_j,R}_{j+1,j} = \ms{A}^{V_j}_{j+1,j}.
\end{equation}
This coincidence is key to connect the factorization of $a_{V_j}$ with $a_{V_j}^{L,R}$. Actually, a splitting of the sum over loops for $a_{V_j}^{L,R}$, similar to that one leading to \eqref{eq:GF-SPLIT}, ends up in
\begin{equation} \label{eq:GF-LR}
 a_{V_j}^L = -\alpha_{j-1} z + \ms{A}^{V_j}_{j,j-1} \, (\rho_{j-1}^L z),
 \qquad
 a_{V_j}^R = \alpha_j^\dag z + (\rho_j^L z) \, \ms{A}^{V_j}_{j+1,j}.
\end{equation}

Hence, the factorization \eqref{eq:GF-SPLIT} reads as
\[
 a_{V_j} = z^{-1} a_{V_j}^L \, a_{V_j}^R,
\]
and can be rewritten in terms of the Schur functions of $V_j$ as
\[
 f_{V_j} = f_{V_j}^R \, f_{V_j}^L.
\]
The identification of the left and right diagrams of \eqref{tz:CMV-ODD-SPLIT} as the representations of $\mc{C}_j$ and $\mc{C}^{(j)}$ leads to the equalities $f_{V_j}^R=f_j$ and $f_{V_j}^L=b_j$ following the arguments in the proof of Theorem \ref{thm:MATRIX-K-1}.

The result that connects Schur functions to first return generating functions gives a quantum dynamical meaning to Schur functions and their Taylor coefficients. The previous proof of Khrushchev's formula also uncovers the quantum dynamical content of the Schur iterates and inverse Schur iterates: a CMV quantum walk can be split at any site $j$ into a left and a right CMV walk so that the first return generating function of the site $j$ becomes, up to a factor $z$, the product of the left and right ones; the $j$-th iterate encodes the contributions of the right walk to the first return amplitudes of the site $j$, while the $j$-th inverse iterate comprises the contributions of the left walk.

The path counting method also sheds light on the ``magic" of Theorem \ref{thm:MATRIX-K-2}. This will be shown for instance in the case of the Schur function $f_{[j,k]}$ for the measure $\mu_{[j,k]}$ given in \eqref{eq:MM-MEASURE}, i.e. the $\mc{C}$-Schur function $f_{V_{[j,k]}}$ of the subspace $V_{[j,k]}$. We can recover this Schur function from the corresponding generating function $a_{V_{[j,k]}}(z)=zf_{V_{[j,k]}}^\dag(z)$ which, due to \eqref{eq:a-U}, has the block structure $a_{V_{[j,k]}}=(\ms{A}^{V_{[j,k]}}_{r,s})_{j\le r,s\le k}$, with $\ms{A}^{V_{[j,k]}}_{r,s}$ given in \eqref{eq:matrix-paths}.

Assume that $j$ and $k$ are even (the other cases can be analyzed in a similar fashion). Representing in red the sites related to $V_{[j,k]}$ leads to the following diagram for $\mc{C}$,
\begin{equation} \label{tz:CMV-EVEN-EVEN}
\begin{tikzpicture}[baseline=(current bounding box.center),->,>=stealth',shorten >=1pt,auto,semithick]
  \tikzstyle{every state}=[node distance=2cm,fill=blue,draw=none,text=white,inner sep=0pt,minimum size=0.55cm]
  \tikzstyle{every place}=[node distance=0.25cm,fill=blue,draw=none,inner sep=0pt,minimum size=0.06cm]

  \node[state] (Cu) {};
  \node[state] (Du) [right of=Cu,fill=red] {$\boldsymbol j$};
  \node[state] (Eu) [right of=Du,fill=red] {};

  \path (Cu) ++(-0.25,0) coordinate (C0u);

  \node[place] (C1u) [left of=C0u] {};
  \node[place] (C2u) [left of=C1u] {};
  \node[place] (C3u) [left of=C2u] {};

  \path (Eu) ++(0.25,0) coordinate (E0u);

  \node[place] (E1u) [right of=E0u,fill=red] {};
  \node[place] (E2u) [right of=E1u,fill=red] {};
  \node[place] (E3u) [right of=E2u,fill=red] {};

  \node[state] (Bd) at (-1,-1.5) {};
  \node[state] (Cd) [right of=Bd] {};
  \node[state] (Dd) [right of=Cd,fill=red] {};
  \node[state] (Ed) [right of=Dd,fill=red] {};

  \path (Bd) ++(-0.25,0) coordinate (B0d);

  \node[place,fill=blue] (B1d) [left of=B0d] {};
  \node[place,fill=blue] (B2d) [left of=B1d] {};
  \node[place,fill=blue] (B3d) [left of=B2d] {};

  \path (Ed) ++(0.25,0) coordinate (E0d);

  \node[place,fill=blue] (E1d) [right of=E0d,fill=red] {};
  \node[place,fill=blue] (E2d) [right of=E1d,fill=red] {};
  \node[place,fill=blue] (E3d) [right of=E2d,fill=red] {};

  \node[state] (Fu) at (5.5,0) [fill=red] {};
  \node[state] (Gu) [right of=Fu,fill=red] {$\boldsymbol k$};
  \node[state] (Hu) [right of=Gu] {};

  \path (Hu) ++(0.25,0) coordinate (H0u);

  \node[place] (H1u) [right of=H0u] {};
  \node[place] (H2u) [right of=H1u] {};
  \node[place] (H3u) [right of=H2u] {};

  \node[state] (Fd) at (6.5,-1.5) [fill=red] {};
  \node[state] (Gd) [right of=Fd] {};
  \node[state] (Hd) [right of=Gd] {};

  \path (Hd) ++(0.25,0) coordinate (H0d);

  \node[place] (H1d) [right of=H0d] {};
  \node[place] (H2d) [right of=H1d] {};
  \node[place] (H3d) [right of=H2d] {};

  \path (Cu) edge [bend right=9,above] 	node {} (Bd)
             edge [loop above]			node {} (Cu)
             edge [bend right=9,below]	node {} (Cd)
        (Du) edge [above]			   		node {} (Cu)
             edge [bend right=9,above] 	node {} (Cd)
             edge [loop above]			node
             		{\footnotesize $\omega_j^R$} (Du)
             edge [bend right=9,below]	node
               {\footnotesize $\eta_j^R\kern8pt$} (Dd)
        (Eu) edge [above]					node {} (Du)
             edge [bend right=9,above] 	node {} (Dd)
             edge [loop above]			node {} (Eu)
             edge [bend right=9,below]	node {} (Ed)
        (Fu) edge [loop above]			node {} (Fu)
             edge [bend right=9,below]	node {} (Fd)
        (Gu) edge [above]			   		node {} (Fu)
             edge [bend right=9,above] 	node {} (Fd)
             edge [loop above]			node
             		{\footnotesize $\omega_k^R$} (Gu)
             edge [bend right=9,below]	node
               {\footnotesize $\eta_k^R\kern8pt$} (Gd)
        (Hu) edge [above]			   		node {} (Gu)
             edge [bend right=9,above] 	node {} (Gd)
             edge [loop above]			node {} (Hu)
             edge [bend right=9,below]	node {} (Hd)
        (Bd) edge [below]					node {} (Cd)
             edge [bend right=9,below]	node {} (Cu)
             edge [loop below]			node {} (Bd)
        (Cd) edge [below]			  		node
             		{\footnotesize $\sigma_j^R$} (Dd)
             edge [bend right=9,below]	node
              {\footnotesize $\kern3pt\zeta_j^R$} (Du)
             edge [loop below]			node {} (Cd)
             edge [bend right=9,above]	node {} (Cu)
        (Dd) edge [below]			  		node {} (Ed)
             edge [bend right=9,below]	node {} (Eu)
             edge [loop below]			node {} (Dd)
             edge [bend right=9,above]	node {} (Du)
        (Ed) edge [loop below]			node {} (Ed)
             edge [bend right=9,above]	node {} (Eu)
        (Fd) edge [below]					node
        				{\footnotesize $\sigma_k^R$} (Gd)
             edge [bend right=9,below]	node
              {\footnotesize $\kern3pt\zeta_k^R$} (Gu)
             edge [loop below]			node {} (Fd)
             edge [bend right=9,above]	node {} (Fu)
        (Gd) edge [below]			  		node {} (Hd)
             edge [bend right=9,below]	node {} (Hu)
             edge [loop below]			node {} (Gd)
             edge [bend right=9,above]	node {} (Gu)
        (Hd) edge [loop below]			node {} (Hd)
             edge [bend right=9,above]	node {} (Hu);
\end{tikzpicture}
\end{equation}
which can be split into left\,$|$\,center\,$|$\,right diagrams by setting $\alpha_j\to\1 \mid \alpha_{j-1}\to-\1$, $\alpha_k\to\1 \mid \alpha_{k-1}\to-\1$ respectively.
\begin{equation} \label{tz:CMV-EVEN-EVEN-SPLIT}
\kern-6pt
\begin{tikzpicture}[baseline=(current bounding box.center),->,>=stealth',shorten >=1pt,auto,semithick]
  \tikzstyle{every state}=[node distance=2cm,fill=blue,draw=none,text=white,inner sep=0pt,minimum size=0.55cm]
  \tikzstyle{every place}=[node distance=0.25cm,fill=blue,draw=none,inner sep=0pt,minimum size=0.06cm]

  \node[state] (Cu) {};
  \node[state] (Du) [right of=Cu,fill=red] {$\boldsymbol j$};
  \node[state] (Du2) at (2.8,0) [fill=red] {$\boldsymbol j$};
  \node[state] (Eu) [right of=Du2,fill=red] {};

  \path (Cu) ++(-0.25,0) coordinate (C0u);

  \node[place] (C1u) [left of=C0u] {};
  \node[place] (C2u) [left of=C1u] {};
  \node[place] (C3u) [left of=C2u] {};

  \path (Eu) ++(0.25,0) coordinate (E0u);

  \node[place] (E1u) [right of=E0u,fill=red] {};
  \node[place] (E2u) [right of=E1u,fill=red] {};
  \node[place] (E3u) [right of=E2u,fill=red] {};

  \node[state] (Bd) at (-1,-1.5) {};
  \node[state] (Cd) [right of=Bd] {};
  \node[state] (Dd) at (3.8,-1.5) [fill=red] {};
  \node[state] (Ed) [right of=Dd,fill=red] {};

  \path (Bd) ++(-0.25,0) coordinate (B0d);

  \node[place,fill=blue] (B1d) [left of=B0d] {};
  \node[place,fill=blue] (B2d) [left of=B1d] {};
  \node[place,fill=blue] (B3d) [left of=B2d] {};

  \path (Ed) ++(0.25,0) coordinate (E0d);

  \node[place,fill=blue] (E1d) [right of=E0d,fill=red] {};
  \node[place,fill=blue] (E2d) [right of=E1d,fill=red] {};
  \node[place,fill=blue] (E3d) [right of=E2d,fill=red] {};

  \node[state] (Fu) at (6.3,0) [fill=red] {};
  \node[state] (Gu) [right of=Fu,fill=red] {$\boldsymbol k$};
  \node[state] (Gu2) at (9.1,0) [fill=red] {$\boldsymbol k$};
  \node[state] (Hu) [right of=Gu2] {};

  \path (Hu) ++(0.25,0) coordinate (H0u);

  \node[place] (H1u) [right of=H0u] {};
  \node[place] (H2u) [right of=H1u] {};
  \node[place] (H3u) [right of=H2u] {};

  \node[state] (Fd) at (7.3,-1.5) [fill=red] {};
  \node[state] (Gd) at (10.1,-1.5) {};
  \node[state] (Hd) [right of=Gd] {};

  \path (Hd) ++(0.25,0) coordinate (H0d);

  \node[place] (H1d) [right of=H0d] {};
  \node[place] (H2d) [right of=H1d] {};
  \node[place] (H3d) [right of=H2d] {};

  \path (Cu) edge [bend right=9,above] 	node {} (Bd)
             edge [loop above]			node {} (Cu)
             edge [bend right=9,below]	node {} (Cd)
        (Du) edge [above]			   		node {} (Cu)
             edge [bend right=9,above] 	node {} (Cd)
             edge [loop above]			node
             {\footnotesize $-\alpha_{j-1}$} (Du)
       (Du2) edge [loop above] 			node
             {\footnotesize $\alpha_j^\dag$} (Du2)
             edge [bend right=9,left]	node
             {\footnotesize $\rho_j^R\kern1pt$} (Dd)
        (Eu) edge [above]					node {} (Du2)
             edge [bend right=9,above] 	node {} (Dd)
             edge [loop above]			node {} (Eu)
             edge [bend right=9,below]	node {} (Ed)
        (Fu) edge [loop above]			node {} (Fu)
             edge [bend right=9,below]	node {} (Fd)
        (Gu) edge [above]			   		node {} (Fu)
             edge [bend right=9,above] 	node {} (Fd)
             edge [loop above]			node
             	 {\footnotesize $-\alpha_{k-1}$} (Gu)
       (Gu2) edge [bend right=9,left] 	node
               {\footnotesize $\rho_k^R\kern1pt$} (Gd)
             edge [loop above]			node
             	 {\footnotesize $\alpha_k^\dag$} (Gu2)
        (Hu) edge [above]					node {} (Gu2)				
             edge [loop above]			node {} (Hu)
             edge [bend right=9,below]	node {} (Hd)
             edge [bend right=9,above]	node {} (Gd)
        (Bd) edge [below]					node {} (Cd)
             edge [bend right=9,below]	node {} (Cu)
             edge [loop below]			node {} (Bd)
        (Cd) edge [bend right=9,right]	node
          {\footnotesize $\kern-2pt\rho_{j-1}^R$} (Du)
             edge [loop below]			node {} (Cd)
             edge [bend right=9,above]	node {} (Cu)
        (Dd) edge [below]			  		node {} (Ed)
             edge [bend right=9,below]	node {} (Eu)
             edge [loop below]			node {} (Dd)
             edge [bend right=9,above]	node {} (Du2)
        (Ed) edge [loop below]			node {} (Ed)
             edge [bend right=9,above]	node {} (Eu)
        (Fd) edge [bend right=9,right]	node
        	  {\footnotesize $\kern-2pt\rho_{k-1}^R$} (Gu)
             edge [loop below]			node {} (Fd)
             edge [bend right=9,above]	node {} (Fu)
        (Gd) edge [below]					node {} (Hd)
             edge [loop below]			node {} (Gd)
             edge [bend right=9,above]	node {} (Gu2)
             edge [bend right=9,below]	node {} (Hu)
        (Hd) edge [loop below]			node {} (Hd)
             edge [bend right=9,above]	node {} (Hu);
\end{tikzpicture}
\end{equation}
For convenience, in the diagrams \eqref{tz:CMV-EVEN-EVEN} and \eqref{tz:CMV-EVEN-EVEN-SPLIT} we only write explicitly the amplitudes which differ from each other.

Let us use the superscripts $L,C,R$ to denote the mathematical objects related to the left, center and right diagrams of \eqref{tz:CMV-EVEN-EVEN-SPLIT}. These diagrams correspond to the block CMV matrices $\mc{C}_j$, $\mc{C}_{(j,k)}$ and $\mc{C}^{(k)}$ respectively. According to \eqref{eq:aH}, the first return generating function of the whole Hilbert space $V_{[j,k]}$ in the center diagram is
\[
 a_{V_{[j,k]}}^C(z) = z\,\mc{C}_{(j,k)}.
\]
Due to the coincidence of one-step amplitudes, this center diagram has the same blocks $\ms{A}^{V_{[j,k]}}_{r,s}$ as \eqref{tz:CMV-EVEN-EVEN}, except for the indices $(r,s)\in\{(j,j),(j+1,j),(k,k-1),(k,k)\}$. Therefore, $a_{V_{[j,k]}}(z)$ coincides with $z\,\mc{C}_{(j,k)}$, and thus with $z\,\mc{C}_{[j,k]}$, up to the four mentioned blocks, which are the only ones in which $\mc{C}_{(j,k)}$ and $\mc{C}_{[j,k]}$ differ from each other.

Examining the diagrams \eqref{tz:CMV-EVEN-EVEN}, \eqref{tz:CMV-EVEN-EVEN-SPLIT} and using \eqref{eq:OZES} we find that
\begin{equation} \label{eq:a-A-GEN}
 \begin{aligned}
 	& a_{V_j}^L =
 	-\alpha_{j-1} z + (\rho_{j-1}^R z) \, \ms{A}^{V_j}_{j-1,j},
 	& & a_{V_k}^R =
 	\alpha_k^\dag z + \ms{A}^{V_k}_{k,k+1} \, (\rho_k^R z),
	\\
	& \ms{A}^{V_{[j,k]}}_{j,j} =
	\omega_j^R z + (\zeta_j^R z) \, \ms{A}^{V_j}_{j-1,j} =
	\alpha_j^\dag a_{V_j}^L,
	& & \ms{A}^{V_{[j,k]}}_{k,k-1} =
 	\zeta_k^R z + \ms{A}^{V_k}_{k,k+1} \, (\sigma_k^R z) =
	a_{V_k}^R \rho_{k-1}^R,
	\\
	& \ms{A}^{V_{[j,k]}}_{j+1,j} =
	\eta_j^R z + (\sigma_j^R z) \, \ms{A}^{V_j}_{j-1,j} =
	\rho_j^R a_{V_j}^L,
	& & \ms{A}^{V_{[j,k]}}_{k,k} =
 	\omega_k^R z + \ms{A}^{V_k}_{k,k+1} \, (\eta_k^R z) =
	- a_{V_k}^R \alpha_{k-1},
 \end{aligned}
\end{equation}
where we have used that $\ms{A}^{V_j,L}_{j-1,j}=\ms{A}^{V_j}_{j-1,j}$ and $\ms{A}^{V_k,R}_{k,k+1}=\ms{A}^{V_k}_{k,k+1}$ for similar reasons as those giving \eqref{eq:W=LR}.

Taking into account that
\[
\begin{aligned}
 & \left(\mc{C}_{[j,k]}\right)_{j,j} =
 \omega_j^R = -\alpha_j^\dag\alpha_{j-1},
 & \quad & \left(\mc{C}_{[j,k]}\right)_{k,k-1} =
 \zeta_k^R = \alpha_k^\dag\rho_{k-1}^R,
 \\
 & \left(\mc{C}_{[j,k]}\right)_{j+1,j} =
 \eta_j^R = -\rho_j^R\alpha_{j-1},
 & & \left(\mc{C}_{[j,k]}\right)_{k,k} =
 \omega_k^R = -\alpha_k^\dag\alpha_{k-1},
\end{aligned}
\]
\eqref{eq:a-A-GEN} shows that the blocks of $a_{V_{[j,k]}}$ which differ from those of $z\,\mc{C}_{[j,k]}$ are obtained by substituting $-\alpha_{j-1}\to z^{-1}a_{V_j}^L$ and $\alpha_k^\dag\to z^{-1}a_{V_k}^R$ into the analogous ones of $z\,\mc{C}_{[j,k]}$. These blocks capture all the dependence of $\mc{C}_{[j,k]}$ on $\alpha_{j-1}$ and $\alpha_k$, so we conclude that
\[
 a_{V_{[j,k]}}(z) = z\,\mc{C}_{[j,k]}\bigg|_\text{\footnotesize
 $-\alpha_{j-1}\to z^{-1}a_{V_j}^L, \, \alpha_k^\dag\to z^{-1}a_{V_k}^R$}.
\]
Since $a_{V_{[j,k]}}(z)=zf_{V_{[j,k]}}^\dag\!(z)$ and the proof of Theorem~\ref{thm:MATRIX-K-1} shows that $a_{V_j}^L(z)=zb_j^\dag(z)$ and $a_{V_k}^R(z)=zf_k^\dag(z)$, the above identity reproduces the result of Theorem \ref{thm:MATRIX-K-2}.

\section{Matrix Khrushchev's formulas from Hessenberg matrices}
\label{sec:HESS-K}

An advantage of the abstract formulation of Khrushchev's formula given in Theorem \ref{thm:GEN-K} is that it is also applicable to unitary matrices which are not necessarily banded. This kind of matrices appear for instance when studying the analogue of Theorem \ref{thm:MATRIX-K-2} for orthonormal polynomial modifications of a measure. That is, if $\varphi_j^{L,R}$ are the orthonormal polynomials with respect to a matrix measure $\mu$ on $\T$, we search for the Schur functions of the measures
\begin{equation} \label{eq:nu}
\begin{aligned}
  & d\nu_{[j,k]} = \text{\footnotesize$
 \begin{pmatrix}
 	(\varphi_j^R)^\dag\,d\mu\,\varphi_j^R
	& (\varphi_j^R)^\dag\,d\mu\,\varphi_{j+1}^R
	& \dots
	& (\varphi_j^R)^\dag\,d\mu\,\varphi_k^R
	\\ \noalign{\vskip3pt}
	(\varphi_{j+1}^R)^\dag\,d\mu\,\varphi_j^R
	& (\varphi_{j+1}^R)^\dag\,d\mu\,\varphi_{j+1}^R
	& \dots
	& (\varphi_{j+1}^R)^\dag\,d\mu\,\varphi_k^R
	\\
	\vdots & \vdots & & \vdots
	\\
	(\varphi_k^R)^\dag\,d\mu\,\varphi_j^R
	& (\varphi_k^R)^\dag\,d\mu\,\varphi_{j+1}^R
	& \dots
	& (\varphi_k^R)^\dag\,d\mu\,\varphi_k^R
 \end{pmatrix}$},
 \\[2pt]
 & d\hat{\nu}_{[j,k]} = \text{\footnotesize$
 \begin{pmatrix}
 	\varphi_j^L\,d\mu\,(\varphi_j^L)^\dag
	& \varphi_j^L\,d\mu\,(\varphi_{j+1}^L)^\dag
	& \dots
	& \varphi_j^L\,d\mu\,(\varphi_k^L)^\dag
	\\ \noalign{\vskip3pt}
	\varphi_{j+1}^L\,d\mu\,(\varphi_j^L)^\dag
	& \varphi_{j+1}^L\,d\mu\,(\varphi_{j+1}^L)^\dag
	& \dots
	& \varphi_{j+1}^L\,d\mu\,(\varphi_k^L)^\dag
	\\
	\vdots & \vdots & & \vdots
	\\
	\varphi_k^L\,d\mu\,(\varphi_j^L)^\dag
	& \varphi_k^L\,d\mu\,(\varphi_{j+1}^L)^\dag
	& \dots
	& \varphi_k^L\,d\mu\,(\varphi_k^L)^\dag
 \end{pmatrix}$}.
\end{aligned}
\end{equation}
The usefulness of Theorem \ref{thm:GEN-K} for this purpose rests on the identification of the above measures as spectral measures with respect to a unitary multiplication operator. Consider first $U_\mu$ as the multiplication operator \eqref{eq:multip-op} in the space $L^2_\mu$ of column vector functions with inner product $\<\cdot\,|\,\cdot\>_R$ given by \eqref{eq:inner-vector-R}. Then, $\nu_{[j,k]}$ is the $U_\mu$-spectral measure of the subspace spanned by the columns of $\varphi_j^R,\varphi_{j+1}^R,\dots,\varphi_k^R$. Likewise, $\hat{\nu}_{[j,k]}$ is the $U_\mu$-spectral measure of the subspace spanned by the rows of $\varphi_j^L,\varphi_{j+1}^L,\dots,\varphi_k^L$, understanding now $L^2_\mu$ as a space of row vector functions with inner product $\<\phi|\psi\>_L=\int\psi(z)\,d\mu(z)\,\phi(z)^\dag$.

The next step is to search for overlapping factorizations of $U_\mu$ adapted to the subspaces whose spectral measure we wish to study. Like in the CMV case, the starting point is a recurrence relation, but in this case for the orthonormal polynomials. This recurrence is given in terms of the Verblunsky coefficients $\alpha_j$ of $\mu$ and the matrices $\rho_j^{L,R}$ in \eqref{eq:rhoLR} by (see \cite{DaPuSi-MOPUC} and references therein)
\begin{equation} \label{eq:REC-OP}
 z\varphi_j^L - \rho_j^L \varphi_{j+1}^L = \alpha_j^\dag \varphi_j^{R,*},
 \qquad
 z\varphi_j^R - \varphi_{j+1}^R \rho_j^R = \varphi_j^{L,*} \alpha_j^\dag,
 \qquad
 p^*(z) = z^{\deg( p )} p^\dag(z^{-1}).
\end{equation}
Expanding the reversed polynomials $\varphi_j^{R,*}$ and $\varphi_j^{L,*}$ in terms of $\varphi_k^L$ and $\varphi_k^R$ respectively, the recurrence relations in \eqref{eq:REC-OP} become
\begin{equation} \label{eq:HESS}
 \begin{aligned}
  	& \varphi^R\,\mc{H}=z\varphi^R,
	& \quad & \varphi^R=(\varphi_0^R,\varphi_1^R,\dots),
	& \quad & \mc{H} = \lim_{j\to\infty}
	\Theta_0^{\scriptscriptstyle(\infty)}\Theta_1^{\scriptscriptstyle(\infty)}
	\cdots\,\Theta_j^{\scriptscriptstyle(\infty)},
	\\
	& \hat{\mc{H}}\varphi^L = z\varphi^L,
	& & \varphi^L=(\varphi_0^L,\varphi_1^L,\dots)^T,
	& & \hat{\mc{H}} = \lim_{j\to\infty}
	\Theta_j^{\scriptscriptstyle(\infty)}\cdots\,
	\Theta_1^{\scriptscriptstyle(\infty)}\Theta_0^{\scriptscriptstyle(\infty)},
 \end{aligned}
\end{equation}
where the limits must be understood in the strong topology and $\Theta_j^{\scriptscriptstyle(N)}$ is defined by extending $\Theta(\alpha_j)$ to order $(N+1)d$ so that $\Theta(\alpha_j)$ acts on $V_j \oplus V_{j+1}$, i.e.
\begin{equation}
 \Theta_j^{\scriptscriptstyle(N)} =
 \1_j \oplus \Theta(\alpha_j) \oplus \1_{N-j-1}.
\end{equation}
Explicitly, $\mc{H}=\mc{H}(\alpha_0,\alpha_1,\dots)$ and $\hat{\mc{H}}=\hat{\mc{H}}(\alpha_0,\alpha_1,\dots)$ are the block Hessenberg matrices
\begin{equation} \label{eq:H}
\begin{aligned}
 & \mc{H} =
 \text{\small $\begin{pmatrix}
 	\alpha_0^\dag & \rho_0^L\alpha_1^\dag & \rho_0^L\rho_1^L\alpha_2^\dag
 	& \rho_0^L\rho_1^L\rho_2^L\alpha_3^\dag & \dots
 	\\ \noalign{\vskip3pt}
 	\rho_0^R & -\alpha_0\alpha_1^\dag & -\alpha_0\rho_1^L\alpha_2^\dag
 	& -\alpha_0\rho_1^L\rho_2^L\alpha_3^\dag & \dots
 	\\ \noalign{\vskip3pt}
 	& \rho_1^R & -\alpha_1\alpha_2^\dag & -\alpha_1\rho_2^L\alpha_3^\dag & \dots
 	\\ \noalign{\vskip3pt}
 	&& \rho_2^R & -\alpha_2\alpha_3^\dag & \dots
 	\\ \noalign{\vskip3pt}
 	&&& \rho_3^R & \dots
 	\\
 	&&&& \dots
 \end{pmatrix}$},
 \\
 & \hat{\mc{H}} =
 \text{\small $\begin{pmatrix}
 	\alpha_0^\dag & \rho_0^L
	\\ \noalign{\vskip3pt}
	\alpha_1^\dag\rho_0^R & -\alpha_1^\dag\alpha_0 & \rho_1^L
	\\ \noalign{\vskip3pt}
	\alpha_2^\dag\rho_1^R\rho_0^R & -\alpha_2^\dag\rho_1^R\alpha_0
	& -\alpha_2^\dag\alpha_1 & \rho_2^L
	\\ \noalign{\vskip3pt}
	\alpha_3^\dag\rho_2^R\rho_1^R\rho_0^R
	& -\alpha_3^\dag\rho_2^R\rho_1^R\alpha_0 & -\alpha_3^\dag\rho_2^R\alpha_1
	& -\alpha_3^\dag\alpha_2 & \rho_3^L
	\\
	\dots & \dots & \dots & \dots & \dots & \dots
 \end{pmatrix}$},
\end{aligned}
\end{equation}
which, analogously to the CMV case, are related by $\hat{\mc{H}}(\{\alpha_j\})=\mc{H}(\{\alpha_j^T\})^T$.

The expressions of $\mc{H}$ and $\hat{\mc{H}}$ in \eqref{eq:HESS} lead naturally to the $V_j$-overlapping factorizations
\begin{equation} \label{eq:HESS-fac}
 \mc{H}=(\mc{H}_j\oplus\1_\infty)(\1_j\oplus\mc{H}^{(j)}),
 \qquad
 \hat{\mc{H}}=(\1_j\oplus\hat{\mc{H}}^{(j)})(\hat{\mc{H}}_j\oplus\1_\infty),
\end{equation}
where $\mc{H}^{(j)}:=\mc{H}(\alpha_j,\alpha_{j+1},\dots)$ and $\hat{\mc{H}}^{(j)}:=\hat{\mc{H}}(\alpha_j,\alpha_{j+1},\dots)$, while $\mc{H}_j=\mc{H}_j(\alpha_0,\dots,\alpha_{j-1})$ and $\hat{\mc{H}}_j=\hat{\mc{H}}_j(\alpha_0,\dots,\alpha_{j-1})$ are finite unitary Hessenberg matrices with the form
\begin{equation} \label{eq:HESSfin}
 \mc{H}_N := \Theta_0^{\scriptscriptstyle(N)}\Theta_1^{\scriptscriptstyle(N)}
 \cdots\,\Theta_{N-1}^{\scriptscriptstyle(N)},
 \qquad
 \hat{\mc{H}}_N := \Theta_{N-1}^{\scriptscriptstyle(N)}\cdots\,
 \Theta_1^{\scriptscriptstyle(N)}\Theta_0^{\scriptscriptstyle(N)}.
\end{equation}
For instance,
\[
 \mc{H}_2 =
 \begin{pmatrix}
 	\alpha_0^\dag & \rho_0^L\alpha_1^\dag & \rho_0^L\rho_1^L
 	\\ \noalign{\vskip3pt}
 	\rho_0^R & -\alpha_0\alpha_1^\dag & -\alpha_0\rho_1^L
 	\\ \noalign{\vskip3pt}
 	& \rho_1^R & -\alpha_1
 \end{pmatrix},
 \qquad
 \hat{\mc{H}}_2 =
 \begin{pmatrix}
 	\alpha_0^\dag & \rho_0^L
	\\ \noalign{\vskip3pt}
	\alpha_1^\dag\rho_0^R & -\alpha_1^\dag\alpha_0 & \rho_1^L
	\\ \noalign{\vskip3pt}
	\rho_1^R\rho_0^R & -\rho_1^R\alpha_0 & -\alpha_1
 \end{pmatrix}.
\]

In spite of these results, the application of Theorem \ref{thm:GEN-K} to $\mc{H}$ and $\hat{\mc{H}}$ has a serious drawback, since the vector polynomials are not always dense in the Hilbert space $L^2_\mu$ of $d$-vector functions. Thus, $\mc{H}$ and $\hat{\mc{H}}$ do not represent in general the whole multiplication operator $U_\mu$, but its restriction to the closure in $L^2_\mu$ of the set $\C^d[z]$ of $d$-vector polynomials. As a consequence, the operators $X\mapsto\mc{H}X$ and $X^T\mapsto X^T\hat{\mc{H}}$ defined in $\ell^2$ by these block Hessenberg matrices are isometric but not always unitary. The non unitarity prevents us from using all the results previously developed about the connection between Schur functions and spectral measures, which are based on the spectral theorem for unitary operators. We will overcome this difficulty by studying the non unitary cases as limiting situations of the unitary ones.

The lack of unitarity of these block Hessenberg matrices is accounted for by the identities
\[
 \begin{gathered}
 	\hat{\mc{H}}\hat{\mc{H}}^\dag = \mc{H}^\dag\mc{H} = \1_\infty,
 	\\
	\begin{aligned}
 		& \mc{H}\mc{H}^\dag = \1_\infty-\Delta,
		& \qquad & \Delta_{j,k} =
		\lim_{N\to\infty}
		\alpha_{j-1}\left(\rho_j^L\rho_{j+1}^L\cdots\rho_N^L\right)
		\left(\rho_N^L\cdots\rho_{k+1}^L\rho_k^L\right)\alpha_{k-1}^\dag,
    	\\
 		& \hat{\mc{H}}^\dag\hat{\mc{H}} = \1_\infty-\hat\Delta,
    	& & \hat{\Delta}_{j,k} =
		\lim_{N\to\infty}
		\alpha_{j-1}^\dag\left(\rho_j^R\rho_{j+1}^R\cdots\rho_N^R\right)
		\left(\rho_N^R\cdots\rho_{k+1}^R\rho_k^R\right)\alpha_{k-1},	
	\end{aligned}
 \end{gathered}
\]
where we use the convention $\alpha_{-1}=-\1$. Therefore, $\mc{H}$ and $\hat{\mc{H}}$ are unitary only when $\lim_{N\to\infty}\rho_0^L\rho_1^L\cdots\rho_N^L=\0$ and $\lim_{N\to\infty}\rho_N^R\cdots\rho_1^R\rho_0^R=\0$ respectively. Actually, these two conditions are equivalent to each other and also to $\sum_{j\ge0}\|\alpha_j\|^2=\infty$, which we will write $(\alpha_j)\notin\ell^2$.

On the other hand, the unitarity of the block Hessenberg matrices simply reflects  the unitarity of the operator $U_\mu$ restricted to the closure $\overline{\C^d[z]}$ of the vector polynomials, i.e. $z\,\overline{\C^d[z]}=\overline{\C^d[z]}$. From this equality it follows that $\overline{\C^d[z]}$ covers the vector Laurent polynomials and thus coincides with $L^2_\mu$. Hence, the condition $(\alpha_j)\notin\ell^2$ guaranteeing the unitarity of $\mc{H}$ and $\hat{\mc{H}}$ also ensures that they are true representations of $U_\mu$, which becomes unitarily equivalent to the corresponding block Hessenberg operators in $\ell^2$. This unitary equivalence assigns the canonical subspace $V_j$ to the subspace of $L^2_\mu$ spanned by the columns (rows) of $\varphi_j^R$ ($\varphi_j^L$). Therefore, the unitarity condition $(\alpha_j)\notin\ell^2$ allows us to identify the $U_\mu$-spectral measures $\nu_{[j,k]}$, $\hat{\nu}_{[j,k]}$ as the spectral measures of $V_{[j,k]} = V_j \oplus \cdots \oplus V_k$ with respect to $\mc{H}$, $\hat{\mc{H}}$ respectively. In particular, $\mu$ is at the same time the spectral measure of the first canonical subspace $V_0$ with respect to $\mc{H}$ and $\hat{\mc{H}}$.

To deal with the extension of Theorem \ref{thm:MATRIX-K-2} to the measures \eqref{eq:nu} it only remains to introduce, analogously to the CMV case, block Hessenberg submatrices and block Hessenberg unitary truncations. We define $\mc{H}_{[j,k]}=\mc{H}_{[j,k]}(\alpha_{j-1},\dots,\alpha_k)$ and $\hat{\mc{H}}_{[j,k]}=\hat{\mc{H}}_{[j,k]}(\alpha_{j-1},\dots,\alpha_k)$ respectively as the submatrices of $\mc{H}$ and $\hat{\mc{H}}$ corresponding to the subspace $V_{[j,k]}$. For instance
\[
 \mc{H}_{[1,3]} = \text{\small$
 \begin{pmatrix}
 	-\alpha_0\alpha_1^\dag & -\alpha_0\rho_1^L\alpha_2^\dag &
	-\alpha_0\rho_1^L\rho_2^L\alpha_3^\dag
 	\\ \noalign{\vskip3pt}
 	\rho_1^R & -\alpha_1\alpha_2^\dag & -\alpha_1\rho_2^L\alpha_3^\dag
 	\\ \noalign{\vskip3pt}
 	& \rho_2^R & -\alpha_2\alpha_3^\dag
 \end{pmatrix}$},
 \qquad
 \hat{\mc{H}}_{[1,3]} = \text{\small$
 \begin{pmatrix}
 	-\alpha_1^\dag\alpha_0 & \rho_1^L
	\\ \noalign{\vskip3pt}
	-\alpha_2^\dag\rho_1^R\alpha_0 & -\alpha_2^\dag\alpha_1 & \rho_2^L
	\\ \noalign{\vskip3pt}
	-\alpha_3^\dag\rho_2^R\rho_1^R\alpha_0 & -\alpha_3^\dag\rho_2^R\alpha_1 &
	-\alpha_3^\dag\alpha_2
 \end{pmatrix}$}.
\]
Setting $\alpha_{j-1}\to\1$ and $\alpha_k\to\1$ in these submatrices we get the finite unitary truncations
\begin{equation} \label{eq:HESS-TRUNC}
 \begin{aligned}
 	& \mc{H}_{(j,k)}
 	:= \mc{H}_{[j,k]}(-\1,\alpha_j\dots,\alpha_{k-1},\1)
 	= \mc{H}_{k-j}(\alpha_j,\dots,\alpha_{k-1}) = \mc{H}_{k-j}^{(j)},
 	\\
 	& \hat{\mc{H}}_{(j,k)}
 	:= \hat{\mc{H}}_{[j,k]}(-\1,\alpha_j\dots,\alpha_{k-1},\1)
 	= \hat{\mc{H}}_{k-j}(\alpha_j,\dots,\alpha_{k-1}) = \hat{\mc{H}}_{k-j}^{(j)},
 \end{aligned}
\end{equation}
an example of which is
\[
 \mc{H}_{(1,3)} = \text{\small$
 \begin{pmatrix}
 	\alpha_1^\dag & \rho_1^L\alpha_2^\dag & \rho_1^L\rho_2^L
 	\\ \noalign{\vskip3pt}
 	\rho_1^R & -\alpha_1\alpha_2^\dag & -\alpha_1\rho_2^L
 	\\ \noalign{\vskip3pt}
 	& \rho_2^R & -\alpha_2
 \end{pmatrix}$}
 = \mc{H}^{(1)}_2,
 \qquad
 \hat{\mc{H}}_{[1,3]} = \text{\small$
 \begin{pmatrix}
 	\alpha_1^\dag & \rho_1^L
	\\ \noalign{\vskip3pt}
	\alpha_2^\dag\rho_1^R & -\alpha_2^\dag\alpha_1 & \rho_2^L
	\\ \noalign{\vskip3pt}
	\rho_2^R\rho_1^R & -\rho_2^R\alpha_1 & -\alpha_2
 \end{pmatrix}$}
 = \hat{\mc{H}}^{(1)}_2.
\]
Both kind of matrices are related by
\[
 \begin{aligned}
 	& \mc{H}_{[j,k]} =
	\left(-\alpha_{j-1} \oplus \1_{k-j} \right)
 	\mc{H}_{(j,k)}
	\left(\1_{k-j} \oplus \alpha_k^\dag \right),
	\\
 	& \hat{\mc{H}}_{[j,k]} =
 	\left(\1_{k-j} \oplus \alpha_k^\dag \right)
 	\hat{\mc{H}}_{(j,k)}
 	\left(-\alpha_{j-1} \oplus \1_{k-j} \right).
 \end{aligned}
\]

With this notation, the alluded extension of Theorem \ref{thm:MATRIX-K-2} reads as follows.

\begin{thm} \label{thm:MATRIX-K-4}
{\bf \small
[$2^{\text{nd}}$ generalized Khrushchev's formulas for matrix measures]}
\newline
Let $f$ be the Schur function of a non-trivial matrix measure $\mu$ on $\T$ with Verblunsky coefficients $\alpha_j$, orthonormal polynomials $\varphi_j^{L,R}$ and block Hessenberg matrices $\mc{H}$, $\hat{\mc{H}}$. If $f_j$ and $b_j$ are the iterates and inverse iterates of $f$, then the Schur functions $h_{[j,k]}$, $\hat{h}_{[j,k]}$ of the measures $\nu_{[j,k]}$, $\hat{\nu}_{[j,k]}$ given in \eqref{eq:nu} are the result of substituting {\rm $-\alpha_{j-1}^\dag \to b_j$} and $\alpha_k \to f_k$ into {\rm $\mc{H}_{[j,k]}^\dag$}, {\rm $\hat{\mc{H}}_{[j,k]}^\dag$} respectively.
More explicitly,
{\rm
\begin{equation} \label{eq:h}
 h_{[j,k]} =
 \left(\1_{k-j} \oplus f_k\right)
 \mc{H}_{(j,k)}^\dag
 \left(b_j \oplus \1_{k-j}\right),
 \qquad
 \hat{h}_{[j,k]} =
 \left(b_j \oplus \1_{k-j}\right)
 \hat{\mc{H}}_{(j,k)}^\dag
 \left(\1_{k-j} \oplus f_k\right).	
\end{equation}}
\end{thm}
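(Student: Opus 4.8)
The plan is to mirror the proof of Theorem~\ref{thm:MATRIX-K-2}, using the block Hessenberg matrices $\mc{H}$, $\hat{\mc{H}}$ and their overlapping factorizations \eqref{eq:HESS-fac} in place of the block CMV ones, with the extra twist that $\mc{H}$, $\hat{\mc{H}}$ are unitary only when $(\alpha_j)\notin\ell^2$: I would first settle this unitary case and then descend to the general (merely isometric) case by a limiting argument. So, suppose $(\alpha_j)\notin\ell^2$. Then $\mc{H}$, $\hat{\mc{H}}$ are unitary and unitarily equivalent to $U_\mu$ so that $\nu_{[j,k]}$, $\hat\nu_{[j,k]}$ are the $\mc{H}$-, $\hat{\mc{H}}$-spectral measures of $V_{[j,k]}=V_j\oplus\cdots\oplus V_k$; hence $h_{[j,k]}=f_{V_{[j,k]}}$ and similarly for $\hat h_{[j,k]}$. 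Now apply Theorem~\ref{thm:GEN-K} twice. First, the $V_j$-overlapping factorization $\mc{H}=(\mc{H}_j\oplus\1_\infty)(\1_j\oplus\mc{H}^{(j)})$ has $\HH_C=V_j$, $\HH_L=\bigoplus_{l<j}V_l$, $\HH_R=\bigoplus_{l>j}V_l$, $U_{LC}=\mc{H}_j$, $U_{CR}=\mc{H}^{(j)}$, and for $V=V_{[j,k]}$ one gets $V_L=\{0\}$, $V_R=V_{[j+1,k]}$; the theorem then gives $f_{V_{[j,k]}}=f^{(j)}_{V_{[j,k]}}\,(g_{V_j}\oplus\1_{k-j})$, where $g_{V_j}$ is the $\mc{H}_j$-Schur function of $V_j$ and $f^{(j)}_{V_{[j,k]}}$ the $\mc{H}^{(j)}$-Schur function of $V_{[j,k]}$. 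Second, apply Theorem~\ref{thm:GEN-K} to $\mc{H}^{(j)}$ with the $V_k$-overlapping factorization $\mc{H}^{(j)}=(\mc{H}_{(j,k)}\oplus\1_\infty)(\1_{k-j}\oplus\mc{H}^{(k)})$ (using $\mc{H}_{(j,k)}=\mc{H}_{k-j}^{(j)}$ from \eqref{eq:HESS-TRUNC}), now with $\HH_C=V_k$, $V_L=V_{[j,k-1]}$, $V_R=\{0\}$; since $V_{[j,k]}$ is the whole space on which $\mc{H}_{(j,k)}$ acts, \eqref{eq:fH} shows its Schur function is $\mc{H}_{(j,k)}^\dag$, while the $\mc{H}^{(k)}$-Schur function of the first canonical subspace $V_k$ is $f_k$ exactly as in the proof of Theorem~\ref{thm:MATRIX-K-1}. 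Thus $f^{(j)}_{V_{[j,k]}}=(\1_{k-j}\oplus f_k)\mc{H}_{(j,k)}^\dag$.

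It only remains to evaluate $g_{V_j}$, the $\mc{H}_j$-Schur function of the \emph{last} canonical subspace $V_j$. Reversing the order of the blocks $V_0,\dots,V_j$ conjugates $\mc{H}_j$ by a permutation; since interchanging the two $d$-blocks of $\Theta(\alpha)$ turns it into $\Theta(-\alpha^\dag)$ and the product $\Theta_0^{(j)}\cdots\Theta_{j-1}^{(j)}$ gets its order reversed, this carries $\mc{H}_j(\alpha_0,\dots,\alpha_{j-1})$ to $\hat{\mc{H}}_j(-\alpha_{j-1}^\dag,\dots,-\alpha_0^\dag)$ and turns $V_j$ into the first canonical subspace, so by the first-canonical-subspace identification $g_{V_j}$ is the Schur function with parameters $(-\alpha_{j-1}^\dag,\dots,-\alpha_0^\dag,\1)$, i.e.\ $g_{V_j}=b_j$. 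Putting the three identifications together gives $h_{[j,k]}=(\1_{k-j}\oplus f_k)\mc{H}_{(j,k)}^\dag(b_j\oplus\1_{k-j})$, which is \eqref{eq:h}. The statement for $\hat h_{[j,k]}$ is obtained in the same way from $\hat{\mc{H}}=(\1_j\oplus\hat{\mc{H}}^{(j)})(\hat{\mc{H}}_j\oplus\1_\infty)$ (with the roles of the left and right subspaces interchanged), or from the transpose symmetry $\hat{\mc{H}}(\{\alpha_l\})=\mc{H}(\{\alpha_l^T\})^T$.

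For $(\alpha_j)\in\ell^2$ I would pass to a limit. Let $\mu_n$ be the non-trivial measure whose Verblunsky coefficients equal $\alpha_l$ for $l<n$ and $\tfrac12\1$ for $l\ge n$; then the coefficient sequence of $\mu_n$ is not in $\ell^2$, the previous steps apply to $\mu_n$, and its coefficients converge to those of $\mu$ entrywise as $n\to\infty$. On the right-hand side of \eqref{eq:h}, the truncation $\mc{H}_{(j,k)}$ and the inverse iterate $b_j$ of $\mu_n$ are eventually equal to those of $\mu$, while the iterate $f_k$ of $\mu_n$ converges to $f_k$ of $\mu$ uniformly on compact subsets of $\D$ by continuity of the Schur algorithm; on the left, the moments of $\mu_n$ stabilize, so $\mu_n\to\mu$ weakly, whence the orthonormal polynomials of bounded degree and then the normalized matrix measures $\nu_{[j,k]}$ of $\mu_n$ converge weakly to $\nu_{[j,k]}$ of $\mu$, and therefore the corresponding Schur functions $h_{[j,k]}$ converge uniformly on compacts. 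Passing to the limit in \eqref{eq:h} written for $\mu_n$ yields the identity for $\mu$, and likewise for $\hat h_{[j,k]}$.

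The genuinely delicate point is precisely this last step: one must check that, in a topology fine enough to control operator-valued Schur functions, all the objects entering \eqref{eq:h}---in particular the measures $\nu_{[j,k]}$, and the tail-dependent iterate $f_k$---depend continuously on $\mu$, so that the non-unitary situation really is a limit of unitary ones. The only other point needing attention is the bookkeeping of block orderings when matching the $\HH_L$--$\HH_C$--$\HH_R$ pattern of Theorem~\ref{thm:GEN-K} with the natural order on $V_{[j,k]}$; by contrast, there is no parity case split here, so the Hessenberg argument is in fact somewhat lighter than its CMV counterpart.
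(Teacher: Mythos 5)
Your proof follows essentially the same route as the paper's: the unitary case $(\alpha_l)\notin\ell^2$ is handled by the same two-step application of Theorem~\ref{thm:GEN-K} to the $V_j$- and $V_k$-overlapping factorizations \eqref{eq:HESS-fac}, with the same identifications $f^{(j,k)}_{V_{[j,k]}}=\mc{H}_{(j,k)}^\dag$, $f^{(k)}_{V_k}=f_k$ and, via the block-reversal argument, $b_{V_j}=b_j$, while the case $(\alpha_l)\in\ell^2$ is reached by the same kind of limiting argument (the paper perturbs every coefficient by $1/n$ instead of replacing the tail by $\tfrac12\1$, an immaterial difference). The convergence statements you correctly single out as the delicate point are precisely what the paper isolates and proves as Lemma~\ref{lem:ASYMP}, so your argument is complete modulo that lemma.
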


\begin{proof}
Assume first that $(\alpha_l)\notin\ell^2$ so that $\mc{H}$, $\hat{\mc{H}}$, as well as $\mc{H}^{(j)}$, $\hat{\mc{H}^{(j)}}$, are unitary. Then, $h_{[j,k]}=f_{V_{[j,k]}}$ and $\hat{h}_{[j,k]}=\hat{f}_{V_{[j,k]}}$ are the Schur functions of $V_{[j,k]}$ with respect to $\mc{H}$ and $\hat{\mc{H}}$ respectively.

Applying Theorem \ref{thm:GEN-K} to the subspace $V=V_{[j,k]}$ and the $V_j$-overlapping factorizations given in \eqref{eq:HESS-fac}, we obtain
\[
 f_{V_{[j,k]}} =
 f^{(j)}_{V_{[j,k]}} (b_{V_j} \oplus \1_{k-j}),
 \qquad
 \hat{f}_{V_{[j,k]}} =
 (\hat{b}_{V_j} \oplus \1_{k-j}) \hat{f}^{(j)}_{V_{[j,k]}},
\]
with $f^{(j)}_{V_{[j,k]}}$, $f^{(j)}_{V_{[j,k]}}$ the Schur functions of $V_{[j,k]}$ with respect to $\mc{H}^{(j)}$, $\hat{\mc{H}}^{(j)}$, and $b_{V_j}$, $\hat{b}_{V_j}$ the Schur functions of $V_j$ with respect to $\mc{H}_j$, $\hat{\mc{H}}_j$.

We can also factorize $f^{(j)}_{V_{[j,k]}}$, $f^{(j)}_{V_{[j,k]}}$ by applying the $V_k$-overlapping factorizations \eqref{eq:HESS-fac} to $\mc{H}^{(j)}$, $\mc{H}^{(j)}$, i.e.
\[
 \mc{H}^{(j)} =
 (\mc{H}_{(j,k)}\oplus\1_\infty)(\1_{k-j}\oplus\mc{H}^{(k)}),
 \qquad
 \hat{\mc{H}}^{(j)} =
 (\1_{k-j}\oplus\hat{\mc{H}}^{(k)})(\hat{\mc{H}}_{(j,k)}\oplus\1_\infty).
\]
Then, we get
\[
 f^{(j)}_{V_{[j,k]}} =
 (\1_{k-j} \oplus f^{(k)}_{V_k}) f^{(j,k)}_{V_{[j,k]}},
 \qquad
 \hat{f}^{(j)}_{V_{[j,k]}} =
 \hat{f}^{(j,k)}_{V_{[j,k]}} (\1_{k-j} \oplus \hat{f}^{(k)}_{V_k}),
\]
where $f^{(j,k)}_{V_{[j,k]}}$, $\hat{f}^{(j,k)}_{V_{[j,k]}}$ are the Schur functions of $V_{[j,k]}$ with respect to $\mc{H}_{(j,k)}$, $\hat{\mc{H}}_{(j,k)}$
and $f^{(k)}_{V_k}$, $\hat{f}^{(k)}_{V_k}$ are the Schur functions of $V_k$ with respect to $\mc{H}^{(k)}$, $\hat{\mc{H}}^{(k)}$.

We conclude that
\[
 f_{V_{[j,k]}} =
 (\1_{k-j} \oplus f^{(k)}_{V_k})
 f^{(j,k)}_{V_{[j,k]}}
 (b_{V_j} \oplus \1_{k-j}),
 \qquad
 \hat{f}_{V_{[j,k]}} =
 (\hat{b}_{V_j} \oplus \1_{k-j})
 \hat{f}^{(j,k)}_{V_{[j,k]}}
 (\1_{k-j} \oplus \hat{f}^{(k)}_{V_k}).
\]

From \eqref{eq:fH}, $f^{(j,k)}_{V_{[j,k]}}=\mc{H}_{(j,k)}^\dag$ and $\hat{f}^{(j,k)}_{V_{[j,k]}}=\hat{\mc{H}}_{(j,k)}^\dag$ because $V_{[j,k]}$ is the whole Hilbert space where $\mc{H}_{(j,k)}$ and $\hat{\mc{H}}_{(j,k)}$ act. Besides, since $V_k$ is the first canonical subspace of the Hilbert space $\bigoplus_{l\ge k}V_l$ related to $\mc{H}^{(k)}$ and $\hat{\mc{H}}^{(k)}$, we find that $f^{(k)}_{V_k}$ and $\hat{f}^{(k)}_{V_k}$ are the Schur functions of the corresponding orthogonality measure, which has Verblunsky coefficients $(\alpha_k,\alpha_{k+1},\dots)$. Thus, $f^{(k)}_{V_k}=\hat{f}^{(k)}_{V_k}=f_k$. On the other hand, $V_j$ is the last canonical subspace regarding the Hilbert space $V_{[0,k]}$ associated with $\mc{H}_j$ and $\hat{\mc{H}}_j$. If $B_l=\{e_{ld},e_{ld+1},\dots,e_{ld+d-1}\}$ is the canonical basis of $V_l$, reordering the basis of $V_{[0,j]}$ as $B_j \cup B_{j-1} \cup \cdots \cup B_0$ forces $V_j$ to be the first subspace. From \eqref{eq:HESSfin} we see that this reordering also changes $\mc{H}_j(\{\alpha_l\}) \to \hat{\mc{H}}_j(\{-\alpha_{j-l-1}^\dag\})$ and $\hat{\mc{H}}_j(\{\alpha_l\}) \to \mc{H}_j(\{-\alpha_{j-l-1}^\dag\})$. Therefore, $b_{V_j}$ and $\hat{b}_{V_j}$ are the Schur functions of the measure with Verblunsky coefficients $(-\alpha_{j-1}^\dag,\dots,-\alpha_0^\dag,\1)$, i.e. $b_{V_j}=\hat{b}_{V_j}=b_j$.

The above arguments complete the proof when $(\alpha_l)\notin\ell^2$. To deal with the complementary case $\boldsymbol\alpha=(\alpha_l)\in\ell^2$ we will use a limiting procedure. Any such a square-summable sequence of Schur parameters can be understood as a pointwise limit of non square-summable ones $\boldsymbol\alpha^n=(\alpha^n_l)\notin\ell^2$, i.e. $\lim_{n\to\infty}\alpha^n_l=\alpha_l$ for all $l$. For instance, since $\lim_{l\to\infty}\alpha_l=0$, we can define $\alpha^n_l=\alpha_l+1/n$ for $n>(1-\sup_l\|\alpha_l\|)^{-1}$ so that $\|\alpha^n_l\|<1$, $\lim_{n\to\infty}\alpha^n_l=\alpha_l$ and $\lim_{l\to\infty}\alpha^n_l=1/n\neq0$.
Since $\boldsymbol\alpha^n\notin\ell^2$, we have already proved that the theorem holds for the measure $\mu^n$ with Verblunsky coefficients $\boldsymbol\alpha^n$.
Taking limits on this result as $n\to\infty$ and using Lemma \ref{lem:ASYMP} we find that the theorem is also valid for the measure $\mu$ with Verblunsky coefficients $\boldsymbol\alpha$.
\end{proof}

The last part of the previous proof requires an asymptotic result, proved in the followig lemma, relating the convergence of matrix Schur parameters, matrix measures, matrix orthogonal polynomials and matrix Schur functions. For convenience we will summarize the notation before stating the lemma: $\boldsymbol\alpha=(\alpha_l)$ are the Verblunsky coefficients of the non-trivial matrix measure $\mu$ on $\T$ with Carath\'eodory function $F$, orthonormal polynomials $\varphi_l^{L,R}$ and block Hessenberg matrices $\mc{H}$, $\hat{\mc{H}}$. Also, $f$ is the matrix Schur function of $\mu$, which has Schur parameters $\boldsymbol\alpha$, while $f_l$, $b_l$ are its iterates and inverse iterates. We will consider a sequence $\boldsymbol\alpha^n=(\alpha^n_l)$ of sequences of
Verblunsky coefficients, the related mathematical objects being denoted by adding the superscript $n$ to those of $\boldsymbol\alpha$. Moreover, we will use the following convergence notations
\[
 \begin{aligned}
	& \boldsymbol\alpha^n \to \boldsymbol\alpha
	&\quad& \lim_{n\to\infty}\alpha^n_l=\alpha_l
	\kern7pt \forall l
	&\quad& \text{ pointwise convergence}
	\\
	& \mu^n \overset{\ast}{\to} \mu
	&& \lim_{n\to\infty}\int \kern-3pt h\,d\mu^n = \int \kern-3pt h\,d\mu
	\kern7pt \forall h\in C(\mbb{T})
	&& \ast\text{-weak convergence}
	\\
	& g^n(z) \overset{D}{\rightrightarrows} g(z)
	&& \lim_{n\to\infty} \sup_{z \in K} \|g^n(z)-g(z)\| = 0
	\kern7pt \forall K \subset D \text{ compact}
	&& \text{ uniform convergence}
	\\ \noalign{\vskip-12pt}
	&&&&& \text{ on compact subsets}
 \end{aligned}
\]
where $C(\mbb{T})$ is the set of continuous scalar functions on $\mbb{T}$, and we understand that $g^n$ and $g$ are matrix-valued functions with domain $D \subset \mbb{C}$.

\begin{lem} \label{lem:ASYMP}
If $\boldsymbol\alpha^n \to \boldsymbol\alpha$, then
\[
 \mu^n \overset{\ast}{\to} \mu,
 \qquad
 \varphi^{L,n}_l \overset{\mbb{C}}{\rightrightarrows} \varphi^L_l,
 \qquad
 \varphi^{R,n}_l \overset{\mbb{C}}{\rightrightarrows} \varphi^R_l,
 \qquad
 f^n_l \overset{\mbb{D}}{\rightrightarrows} f_l,
 \qquad
 b^n_l \overset{\mbb{D}}{\rightrightarrows} b_l,
\]
while for the measures defined in \eqref{eq:nu} and the Schur functions given in \eqref{eq:h}
\[
 \nu^n_{[j,k]} \overset{\ast}{\to} \nu_{[j,k]},
 \qquad
 \hat{\nu}^n_{[j,k]} \overset{\ast}{\to} \hat{\nu}_{[j,k]},
 \qquad
 h^n_{[j,k]} \overset{\mbb{D}}{\rightrightarrows} h_{[j,k]},
 \qquad
 \hat{h}^n_{[j,k]} \overset{\mbb{D}}{\rightrightarrows} \hat{h}_{[j,k]}.
\]
\end{lem}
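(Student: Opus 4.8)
The plan is to reduce the statement to two ingredients. The first is an algebraic continuity that is local in the index: the matrices $\rho_j^{L,R}=(\1-\alpha_j^\dag\alpha_j)^{1/2}$, $(\1-\alpha_j\alpha_j^\dag)^{1/2}$ depend norm-continuously on $\alpha_j$ because the square root is continuous on positive matrices, and wherever an inverse $(\rho_j^{L,R})^{-1}$ is needed it is available and continuous thanks to $\|\alpha_j\|<1$. Consequently the recurrence coefficients in \eqref{eq:REC-OP}, the entries of the band matrices $\mc{C},\hat{\mc{C}}$, and the entries of the finite matrices $\mc{C}_j$, $\mc{H}_j$, $\mc{C}_{(j,k)}$, $\mc{H}_{(j,k)}$ and their hatted versions are all continuous functions of the finitely many $\alpha_l$ on which they depend. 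The second ingredient is the general principle that \emph{weak-$\ast$ convergence of normalised matrix measures on $\T$ forces locally uniform convergence on $\D$ of the associated Carath\'eodory and Schur functions}. I would record this at the outset: writing \eqref{eq:C-S} in the form
\[
 F(z)=\int_\T\frac{t+z}{t-z}\,d\mu(t),
 \qquad
 f(z)=\Big[\int_\T\frac{2}{t-z}\,d\mu(t)\Big]\big(F(z)+\1\big)^{-1},
\]
so that the rewriting $z^{-1}(F(z)-\1)=\int_\T 2(t-z)^{-1}\,d\mu(t)$ removes the apparent singularity at $z=0$, one observes that on a compact $K\subset\D$ both kernels $t\mapsto (t+z)/(t-z)$ and $t\mapsto 2/(t-z)$ are continuous on $\T$ uniformly in $z\in K$; hence $\mu^n\overset{\ast}{\to}\mu$ gives $F^n\overset{\D}{\rightrightarrows}F$ and convergence of the numerator, and since $F(z)+\1$ is invertible on $\D$ (its real part is $\ge\1$) and matrix inversion is continuous, $f^n\overset{\D}{\rightrightarrows}f$. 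Applied to finitely supported spectral measures of finite unitary matrices, together with $F_V(z)=P_V(U+z\1)(U-z\1)^{-1}P_V$, the same remark will handle the finite-CMV situations below.

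Granting these, the first block of conclusions goes as follows. From $\boldsymbol\alpha^n\to\boldsymbol\alpha$ one gets $\rho^{L,n}_l\to\rho^L_l$ and $\rho^{R,n}_l\to\rho^R_l$; inducting on $l$ in the Szeg\H{o} recurrence \eqref{eq:REC-OP}, where one solves for $\varphi^{L,R}_{l+1}$ from $\varphi^{L,R}_l$ using $(\rho^{L,R}_l)^{-1}$ and the reversal $p\mapsto p^*$ (a linear operation on polynomials of degree $\le l$) and starts from the normalisation-fixed $\varphi^{L,R}_0$, yields coefficientwise convergence $\varphi^{L,n}_l\to\varphi^L_l$ and $\varphi^{R,n}_l\to\varphi^R_l$, hence, these being polynomials of the fixed degree $l$, uniform convergence on compact subsets of $\C$. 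For the measure, the moments satisfy $\mu_m=P_0\mc{C}^mP_0$, so they are polynomials in the (continuously $\boldsymbol\alpha$-dependent) entries of the band matrix $\mc{C}$; thus $\mu^n_m\to\mu_m$ for every $m$, and since the $\mu^n$ are normalised — in particular of uniformly bounded total variation — and carried by the compact set $\T$, convergence of all moments forces $\mu^n\overset{\ast}{\to}\mu$. The second ingredient then gives $F^n\overset{\D}{\rightrightarrows}F$ and $f^n\overset{\D}{\rightrightarrows}f$.

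For the iterates, $f_l$ is the Schur function of the measure with Verblunsky coefficients $(\alpha_l,\alpha_{l+1},\dots)$ and these shifted sequences still converge pointwise, so applying the previous paragraph to them gives $f^n_l\overset{\D}{\rightrightarrows}f_l$. For the inverse iterates, $b_l$ has the \emph{finite} parameter list $(-\alpha_{l-1}^\dag,\dots,-\alpha_0^\dag,\1)$, so it equals the $\mc{C}_l(-\alpha_{l-1}^\dag,\dots,-\alpha_0^\dag)$-Schur function of the first canonical subspace $V_0$; since the entries of the finite unitary $\mc{C}_l(-\alpha_{l-1}^\dag,\dots,-\alpha_0^\dag)$ depend continuously on $\alpha_0,\dots,\alpha_{l-1}$, the second ingredient (in its finite-unitary form) gives $b^n_l\overset{\D}{\rightrightarrows}b_l$. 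Finally, the $m$-th moment of $\nu_{[j,k]}$ in \eqref{eq:nu} has $(r,s)$-block $\int_\T z^m(\varphi^R_r)^\dag\,d\mu\,\varphi^R_s$; from $\varphi^{R,n}_r\rightrightarrows\varphi^R_r$ on $\T$, $\mu^n\overset{\ast}{\to}\mu$ and $\sup_n\|\mu^n\|<\infty$, a routine three-term estimate gives convergence of these integrals, and since the $0$-th moment is the identity on $V_{[j,k]}$ by orthonormality, $\nu^n_{[j,k]}$ is normalised and hence $\nu^n_{[j,k]}\overset{\ast}{\to}\nu_{[j,k]}$; likewise $\hat\nu^n_{[j,k]}\overset{\ast}{\to}\hat\nu_{[j,k]}$. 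One more application of the second ingredient, with $h_{[j,k]},\hat h_{[j,k]}$ understood as the Schur functions of $\nu_{[j,k]},\hat\nu_{[j,k]}$, yields $h^n_{[j,k]}\overset{\D}{\rightrightarrows}h_{[j,k]}$ and $\hat h^n_{[j,k]}\overset{\D}{\rightrightarrows}\hat h_{[j,k]}$.

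The work is essentially all in the two ingredients, and the hard part will not be any single estimate but the care needed to keep the argument clean: getting the removable singularity at $z=0$ right in the passage from $F$ to $f$, checking that \eqref{eq:REC-OP} really does determine $\varphi^{L,R}_{l+1}$ from $\varphi^{L,R}_l$ continuously (which is where $\|\alpha_l\|<1$ enters), and — the point most easily overlooked — deducing the final convergence $h^n_{[j,k]}\overset{\D}{\rightrightarrows}h_{[j,k]}$ \emph{only} from $\nu^n_{[j,k]}\overset{\ast}{\to}\nu_{[j,k]}$ and measure-to-Schur-function continuity, never from formula \eqref{eq:h}. This last precaution is exactly what makes the lemma usable in the proof of Theorem~\ref{thm:MATRIX-K-4}: feeding $h^n\rightrightarrows h$, $f^n_k\rightrightarrows f_k$, $b^n_j\rightrightarrows b_j$ and the continuity of $\mc{H}_{(j,k)}^\dag$ into the already-proved case $(\alpha_l)\notin\ell^2$ of that theorem produces \eqref{eq:h} for $(\alpha_l)\in\ell^2$ without circularity.
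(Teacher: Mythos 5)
Your proposal is correct, but it reaches the conclusions by a genuinely different route than the paper. The paper runs the implications in the opposite order: it first proves $f^n \overset{\mbb{D}}{\rightrightarrows} f$ directly from $\boldsymbol\alpha^n\to\boldsymbol\alpha$, by taking an arbitrary locally uniform limit point $g$ of the normal family $\{f^n\}$, showing by induction through the matrix Schur algorithm \eqref{eq:matrix-SA} that $f^n_l\overset{\mbb{D}}{\rightrightarrows}g_l$ and hence that $g$ has Schur parameters $\boldsymbol\alpha$, so $g=f$ by uniqueness; only afterwards does it deduce $F^n\overset{\mbb{D}}{\rightrightarrows}F$ and then $\mu^n\overset{\ast}{\to}\mu$ by uniqueness of weak-$\ast$ limit points. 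You instead start from convergence of the moments of $\mu^n$ (via the band structure of $\mc{C}$ and continuity of its entries in the $\alpha_l$), get $\mu^n\overset{\ast}{\to}\mu$ from density of Laurent polynomials and uniform boundedness of total variations, and push everything forward through your ``second ingredient''. Both are sound; your route trades the Montel/limit-point argument for the moment formula plus the continuity of $\mu\mapsto(F,f)$, which is arguably more elementary and lets a single principle do all the work, while the paper's route never leaves the Schur algorithm and needs no moment computation. The one point where I would push back is your closing claim that deducing $h^n_{[j,k]}\overset{\mbb{D}}{\rightrightarrows}h_{[j,k]}$ from formula \eqref{eq:h} would be circular: the paper does exactly that, from $\mc{H}^n_{(j,k)}\to\mc{H}_{(j,k)}$, $f^n_k\overset{\mbb{D}}{\rightrightarrows}f_k$ and $b^n_j\overset{\mbb{D}}{\rightrightarrows}b_j$, and this is not circular because convergence of the right-hand side of \eqref{eq:h} is a statement about the formula alone and presupposes nothing about its identification with the Schur function of $\nu_{[j,k]}$ for the limiting $\boldsymbol\alpha$. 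In the limiting step of Theorem \ref{thm:MATRIX-K-4} one in fact needs \emph{both} convergences --- of the Schur functions of $\nu^n_{[j,k]}$ (your reading) and of the formulas $h^n_{[j,k]}$ (the paper's reading) --- and each write-up supplies the missing half elsewhere, so the two organizations are equivalent in substance; your precaution is harmless but not logically necessary.
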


\begin{proof}
Suppose that $\boldsymbol\alpha^n \to \boldsymbol\alpha$. Let us prove first that $f^n_l \overset{\mbb{D}}{\rightrightarrows} f_l$ for any $l$. Let $g$ be any of the limit points of the sequence $f^n$ in the topology of the uniform convergence in $\mbb{D}$, i.e. $f^n \overset{\mbb{D}}{\rightrightarrows} g$ for some subsequence which we omit for convenience. Then, $g$ must be a matrix Schur function too.

Let us prove by induction on $l$ that $f^n_l \overset{\mbb{D}}{\rightrightarrows} g_l$. The result is obviously true for $l=0$. Assuming it for a given index $l$ implies that $\alpha_l=\lim_{n\to\infty}\alpha^n_l=\lim_{n\to\infty}f^n_l(0)=g_l(0)$, hence $f^n_l-\alpha^n_l \overset{\mbb{D}}{\rightrightarrows} g_l-g_l(0)$. Besides, in any compact subset $K\subset\mbb{D}$ containing the origin,
\[
 \begin{aligned}
	& \|(\1 - {\alpha^n_l}^\dag f^n_l)^{-1} - (\1 - g_l(0)^\dag g_l)^{-1}\|
    = \|(\1 - {\alpha^n_l}^\dag f^n_l)^{-1}
 		({\alpha^n_l}^\dag f^n_l - \alpha_l^\dag g_l)
 		(\1 - {\alpha_l}^\dag g_l)^{-1}\|
	\\
	& \le
	  \|(\1 - {\alpha^n_l}^\dag f^n_l)^{-1}\|
 	  \|{\alpha^n_l}^\dag (f^n_l - g_l)
		+ ({\alpha^n_l}^\dag - \alpha_l^\dag) g_l\|
 	  \|(\1 - {\alpha_l}^\dag g_l)^{-1}\|
	\le
	\frac{2\sup_K\|f^n_l-g_l\|}{(1-\|\alpha^n_l\|)(1-\|\alpha_l\|)},
 \end{aligned}
\]
which shows that $(\1 - {\alpha^n_l}^\dag f^n_l)^{-1} \overset{\mbb{D}}{\rightrightarrows} (\1 - g_l(0)^\dag g_l)^{-1}$. Using the explicit form \eqref{eq:matrix-SA} of the matrix Schur algorithm we conclude that $f^n_{l+1} \overset{\mbb{D}}{\rightrightarrows} g_{l+1}$, thus $\alpha_{l+1}=g_{l+1}(0)$.

This identifies $\boldsymbol\alpha$ as the Schur parameters of $g$, proving that $g=f$ because the Schur parameters characterize the Schur function of a non-trivial measure on $\T$. Therefore, $f$ is the only limit point of $f^n$, which means that $f^n \overset{\mbb{D}}{\rightrightarrows} f$. Then, the above inductive proof shows that $f^n_l \overset{\mbb{D}}{\rightrightarrows} f_l$ for any $l$.

The previous arguments also work for finite sequences of Verblunsky parameters. Since $(-\alpha_{l-1}^{n,\dag},-\alpha_{l-2}^{n,\dag},\dots,-\alpha_0^{n,\dag},\1) \to (-\alpha_{l-1}^\dag,-\alpha_{l-2}^\dag,\dots,-\alpha_0^\dag,\1)$, we also find that $b^n_l \overset{\mbb{D}}{\rightrightarrows} b_l$ for any $l$.

On the other hand, $\1+zf^n \overset{\mbb{D}}{\rightrightarrows} \1+zf$, while the relations
\[
 \|(\1-zf^n)^{-1} - (\1-zf)^{-1}\| =  \|(\1-zf^n)^{-1} z(f^n-f) (\1-zf)^{-1}\|
 \le \frac{\|f^n-f\|}{(1-|z|)^2},
 \qquad
 z \in \D,
\]
imply that $(\1-zf^n)^{-1} \overset{\mbb{D}}{\rightrightarrows} (\1-zf)^{-1}$. Therefore, rewriting the relation between Schur and Carath\'eodory functions in \eqref{eq:C-S} as $F = (\1+zf)(\1-zf)^{-1}$, we find that $F^n \overset{\mbb{D}}{\rightrightarrows} F$. If $\nu$ is any limit point of $\mu^n$ in the $\ast$-weak topology, then the relation given in \eqref{eq:C-S} between a measure and its Carath\'eodory function shows that $F^n \overset{\mbb{D}}{\rightrightarrows} G$ for some subsequence, where $G$ is the Carath\'eodory function of $\nu$. Hence, $G=F$ and $\nu=\mu$ because the Carath\'eodory function characterizes a measure on $\T$.

Concerning the orthonormal polynomials, using recurrence \eqref{eq:REC-OP} it is straightforward to prove by induction on $l$ that $\varphi^{n,L}_l \overset{\mbb{C}}{\rightrightarrows} \varphi^L_l$, $\varphi^{n,R}_l \overset{\mbb{C}}{\rightrightarrows} \varphi^R_l$, $\varphi^{n,L,\ast}_l \overset{\mbb{C}}{\rightrightarrows} \varphi^{L,\ast}_l$, $\varphi^{n,R,\ast}_l \overset{\mbb{C}}{\rightrightarrows} \varphi^{R,\ast}_l$.

Finally, $\mu^n_{[j,k]} \overset{\ast}{\to} \mu_{[j,k]}$ follows from the convergence $\mu^n \overset{\ast}{\to} \mu$ and $\varphi^{n,R}_l \overset{\mbb{C}}{\rightrightarrows} \varphi^R_l$, and analogously for $\hat{\mu}^n_{[j,k]} \overset{\ast}{\to} \hat{\mu}_{[j,k]}$ using the convergence of left orthonormal polynomials. The fact that $\lim_{n\to\infty}\mc{H}^n_{(j,k)}=\mc{H}_{(j,k)}$ due to the presence of a finite number of Schur parameters in these truncations, together with the convergence $f^n_k \overset{\mbb{D}}{\rightrightarrows} f_k$, $b^n_j  \overset{\mbb{D}}{\rightrightarrows} b_j$, yields $h^n_{[j,k]} \overset{\mbb{D}}{\rightrightarrows} h_{[j,k]}$. A similar argument using $\lim_{n\to\infty}\hat{\mc{H}}^n_{(j,k)}=\hat{\mc{H}}_{(j,k)}$ shows that $\hat{h}^n_{[j,k]} \overset{\mbb{D}}{\rightrightarrows} \hat{h}_{[j,k]}$.
\end{proof}

It is worth mentioning that Theorem \ref{thm:MATRIX-K-4} also holds for those finitely supported measures with a finite sequence of Verblunsky coefficients $\boldsymbol\alpha=(\alpha_0,\dots,\alpha_{N-1},\1)$. This can be seen by a limiting argument which mimics that one in the proof of the Theorem: such a finite sequence $\boldsymbol\alpha$ can be obtained as a pointwise limit of infinite sequences of Verblunsky coefficients, for instance $\boldsymbol\alpha^n=(\alpha_0,\dots,\alpha_{N-1},\frac{n}{n+1}\1,0,0,\dots)$; since the proof of Lemma \ref{lem:ASYMP} also works in the situation of infinite sequences of Verblunsky coefficients converging pointwise to a finite one, Theorem \ref{thm:MATRIX-K-4} for $\boldsymbol\alpha$ follows by taking $n\to\infty$ in the case $\boldsymbol\alpha^n$ where the result has been already proved.

\section{From matrix to scalar Khrushchev's formulas}
\label{sec:MtoS}

Given a unitary operator $U$, the relation between the $U$-Schur functions $f_{\hat{V}}$ and $f_V$ of different nested subspaces $\hat{V} \subset V$ is not trivial, but can be inferred from the connection between the corresponding spectral measures and Carath\'eodory functions, given in terms of the orthogonal projection $\hat{P}=P_{\hat{V}}$ onto $\hat{V}$ by
\[
 \mu_{\hat{V}} = \hat{P} \mu_V \hat{P},
 \qquad
 F_{\hat{V}} = \hat{P} F_V \hat{P}.
\]
This, together with relation \eqref{eq:C-S} between Schur and Carath\'eodory functions, as well as its inverse, gives a procedure to obtain $f_{\hat{V}}$, once $f_V$ is known. In particular, the identities
\begin{equation} \label{eq:fV-fpsi}
 F_V = (\1+zf_V)(\1-zf_V)^{-1},
 \qquad
 F_\psi = \<\psi|F_V\psi\>,
 \qquad
 f_\psi = \frac{1}{z} \frac{F-1}{F+1},
\end{equation}
provide the scalar $U$-Schur function $f_\psi$ of any vector $\psi\in V$, starting from the matrix $U$-Schur function $f_V$.

We will illustrate this procedure with the computation for any scalar measure $\mu$ on $\T$ of the Schur function for
\begin{equation} \label{eq:SM-qubit}
 |\beta\chi_j+\gamma\chi_{j+1}|^2 \, d\mu,
 \qquad
 \beta,\gamma\in\C,
 \qquad
 |\beta|^2+|\gamma|^2=1,
\end{equation}
$\chi_j$ being the scalar CMV basis related to the scalar orthonormal polynomials $\varphi_j=\varphi_j^L=\varphi_j^R$ by \eqref{eq:OP-OLP}. According to \eqref{eq:fV-fpsi} the corresponding Carath\'eodory function $F_{\beta,\gamma}$ can be expressed as
\begin{equation} \label{eq:F-F}
 F_{\beta,\gamma} =
 \begin{pmatrix} \overline\beta & \overline\gamma \end{pmatrix}
 F_{[j,j+1]}
 \begin{pmatrix} \beta \\ \gamma \end{pmatrix},
\end{equation}
in terms of the matrix Carath\'eodory function $F_{[j,j+1]}$ of the measure $\mu_{[j,j+1]}$ given in \eqref{eq:MM-MEASURE}.

Assume for instance that $j$ is even. The related Schur function $f_{[j,j+1]}$, obtained from Theorem \ref{thm:MATRIX-K-2}, factorizes as
\begin{equation} \label{eq:f[j,j+1]}
 f_{[j,j+1]} = (b_j \oplus f_{j+1}) \, \Theta(\alpha_j)^\dag =
 \begin{pmatrix}
 \alpha_jb_j & \rho_jb_j \\ \rho_jf_{j+1} & -\overline\alpha_jf_{j+1}
 \end{pmatrix},
 \qquad
 \rho_j = \sqrt{1-|\alpha_j|^2},
\end{equation}
with $(\alpha_0,\alpha_1,\dots)$ the Verblunsky coefficients of $\mu$ and $f_j$, $b_j$ the iterates and inverse iterates of the corresponding Schur function. From the first equality of \eqref{eq:fV-fpsi} we find that
\[
\begin{gathered}
 F_{[j,j+1]} = \frac{1}{1 - g_L g_R - \alpha g_L + \overline\alpha g_R}
 \begin{pmatrix}
 	1 + g_L g_R + \alpha g_L + \overline\alpha g_R & 2\rho g_L
	\\
	2\rho g_R & 1 + g_L g_R - \alpha g_L - \overline\alpha g_R
 \end{pmatrix},
 \\ \noalign{\vskip5pt}
 \alpha=\alpha_j,
 \qquad
 \rho=\rho_j,
 \qquad
 g_L(z)=zb_j(z),
 \qquad
 g_R(z)=zf_{j+1}(z).
\end{gathered}
\]
Then, the other two relations in \eqref{eq:fV-fpsi} lead to the following Schur function $f_{\beta,\gamma}$ for the measure \eqref{eq:SM-qubit},
\begin{equation} \label{eq:fbc0}
 f_{\beta,\gamma} =
 \frac{zb_jf_{j+1}+\overline\beta(\beta\alpha_j+\gamma\rho_j)b_j
 +\overline\gamma(\beta\rho_j-\gamma\overline\alpha_j)f_{j+1}}
 {1+z\gamma(\overline\beta\rho_j-\overline\gamma\alpha_j)b_j
 +\beta(\overline\beta\overline\alpha_j+\overline\gamma\rho_j)zf_{j+1}}.
\end{equation}
A similar analysis for an odd index $j$ yields the same result for $f_{\beta,\gamma}$, but changing $\beta\to\overline\beta$ and $\gamma\to\overline\gamma$.

Formula \eqref{eq:fbc0} has a nice interpretation in terms of mappings between Schur functions. To understand this interpretation let us rewrite \eqref{eq:fbc0} as
\begin{equation} \label{eq:fbc}
 f_{\beta,\gamma} = T_{u_j,v_j}(b_j,f_{j+1}),
 \qquad
 \begin{cases}
 	u_j=\overline\beta\beta_j,
 	\\
 	v_j=\overline\gamma\gamma_j,
 \end{cases}
 \quad
 \begin{pmatrix} \beta_j \\ \gamma_j \end{pmatrix} =
 \Theta(\alpha_j)^\dag
 \begin{pmatrix} \beta \\ \gamma \end{pmatrix} =
 \begin{pmatrix}
 \beta\alpha_j+\gamma\rho_j \\ \beta\rho_j-\gamma\overline\alpha_j
 \end{pmatrix},
\end{equation}
where $T_{u,v}$ maps any pair of Schur functions $g(z),h(z)$ into a single one,
\begin{equation} \label{eq:Tuv}
 T_{u,v}(g,h) :=
 \frac{zgh+ug+vh}{1+\overline{v}zg+\overline{u}zh},
 \qquad
 |u|+|v|\le1.
\end{equation}
The parameters $u_j,v_j$ in \eqref{eq:fbc} satisfy the inequality $|u_j|+|v_j| \le \sqrt{|\beta|^2+|\gamma|^2} \sqrt{|\beta_j|^2+|\gamma_j|^2} = 1$ which guarantees that $T_{u_j,v_j}$ maps Schur functions into Schur functions.

The fact that the binary operation \eqref{eq:Tuv} is a consistent composition law for Schur functions follows from the analysis of the two-variable analytic function
\begin{equation} \label{eq:2-MOBIUS}
 (z,w) \mapsto \frac{zw+uz+vw}{1+\overline{v}z+\overline{u}w},
 \qquad
 z,w\in\D,
 \qquad
 |u|+|v|\le1.
\end{equation}
Since
\[
 \left|
 \frac{zw+uz+vw}{1+\overline{v}z+\overline{u}w}
 \right| =
 \left|
 zw \frac{1+u\overline{w}+v\overline{z}}{1+\overline{v}z+\overline{u}w}
 \right| = 1,
 \qquad
 z,w\in\T,
 \qquad
 |u|+|v|<1,
\]
the maximum modulus principle implies that \eqref{eq:2-MOBIUS} maps $\D^2$ onto the closed unit disk $\overline\D$ for $|u|+|v|<1$, and by continuity also for $|u|+|v|=1$. Then, Schwarz's lemma applied to the result of substituting $z \to zg(z)$ and $w \to zh(z)$ into \eqref{eq:2-MOBIUS} ensures that $T_{u,v}(g,h)$ is a Schur function whenever $g$ and $h$ are Schur functions.

The transformations \eqref{eq:2-MOBIUS} can be considered as a two-variable version of the M\"obius transformations mapping $\D$ onto itself,
\begin{equation} \label{eq:MOBIUS}
 z \mapsto \frac{z+\alpha}{1+\overline{\alpha}z},
 \qquad
 |\alpha|<1,
\end{equation}
which, together with Schwarz's principle, constitute the essence of the scalar Schur algorithm \eqref{eq:SA}. Also, the operation $T_{u,v}$ defined by \eqref{eq:Tuv} is a binary analog of the backward step for the Schur algorithm, $f_j = T_{\alpha_j}(f_{j+1})$, given by the transformation
\[
 T_\alpha(f) := \frac{zf+\alpha}{1+\overline{\alpha}zf},
 \qquad
 |\alpha|<1.
\]
Actually, the identities $T_\alpha(f) = T_{\alpha,0}(1,f) = T_{0,\alpha}(f,1)$ show that $T_\alpha$ comes from particular choices in $T_{u,v}$.

Summarizing, the Schur function $f_{\beta,\gamma}=T_{u_j,v_j}(b_j,f_j)$ of $|\beta\chi_j+\gamma\chi_{j+1}|^2\,d\mu$ is given by a binary version $T_{u_j,v_j}$ of the backward Schur algorithm step acting on the inverse iterate $b_j$ and the iterate $f_j$ of the Schur function of $\mu$. These iterates encode the dependence of $f_{\beta,\gamma}$ on the ``left" and ``right" Schur parameters $(\alpha_k)_{k<j}$, $(\alpha_k)_{k>j}$ respectively. The dependence on the coordinates $\beta,\gamma$ and the remaining Schur parameter $\alpha_j$ is concentrated on the coefficients $u_j$, $v_j$ of the binary operation.

In spite of the absence of a factorization, this result can be viewed as a Khrushchev formula for $|\beta\chi_j+\gamma\chi_{j+1}|^2\,d\mu$ because it expresses the Schur function of this modified measure in terms of iterates and inverse iterates of the Schur function for the original measure $\mu$. In the extreme cases
\[
\begin{aligned}
 & \beta=1, \; \gamma=0 \kern7pt \Rightarrow \kern7pt
 T_{u_j,v_j}(b_j,f_{j+1}) = T_{\alpha_j,0}(b_j,f_{j+1})
 = b_j\,T_{\alpha_j}(f_{j+1}) = b_j f_j,
 \\
 & \beta=0, \; \gamma=1 \kern7pt \Rightarrow \kern7pt
 T_{u_j,v_j}(b_j,f_{j+1}) = T_{0,-\overline\alpha_j}(b_j,f_{j+1})
 = f_{j+1}\,T_{-\overline\alpha_j}(b_j) = f_{j+1} b_{j+1},
\end{aligned}
\]
we recover the known scalar Khrushchev formulas for $|\chi_k|^2\,d\mu=|\varphi_k|^2\,d\mu$ with $k=j,j+1$.

A similar analysis of the measure $|\beta\varphi_j+\gamma\varphi_{j+1}|^2\,d\mu$ using the results of Theorem \ref{thm:MATRIX-K-4} yields the following expression for the corresponding Schur function $h_{\beta,\gamma}$,
\[
 h_{\beta,\gamma} =
 \frac{zb_jf_{j+1} + \overline\beta(\beta\alpha_jb_j+\gamma\rho_j)
 + \overline\gamma(\beta\rho_jb_j-\gamma\overline\alpha_j)f_{j+1}}
 {1 + \gamma(\overline\beta\rho_j-\overline\gamma\alpha_jb_j)z
 + \beta(\overline\beta\overline\alpha_j+\overline\gamma\rho_jb_j)zf_{j+1}},
\]
which should be compared with the Schur function $f_{\beta,\gamma}$ given in \eqref{eq:fbc0}.

These examples illustrate the usefulness of matrix Khrushchev's formulas for the computation of scalar Schur functions. This kind of results are of interest, not only from the mathematical point of view, but also for the study of return properties of quantum states which are defined by certain superpositions  \cite[Section 6]{GVWW}.

\section*{Acknowledgements}

C. Cedzich and R.F. Werner acknowledge support from the ERC grant DQSIM and the European project SIQS.

A.H. Werner acknowledges support from the ERC grant TAQ.

F.A. Gr\"{u}nbaum acknowledges support from the Applied Math. Sciences subprogram of the Office of Energy Research, US Department of Energy, under
Contract DE-AC03-76SF00098, and from AFOSR grant FA95501210087 through a subcontract to Carnegie Mellon University.

The work of L. Vel\'azquez is partially supported by the research project MTM2011-28952-C02-01 from the Ministry of Science and Innovation of Spain and the European Regional Development Fund (ERDF), and by Project E-64 of Diputaci\'on General de Arag\'on (Spain).

\end{document}